\let\cref\Cref
\newdimen\LineSpace
\tikzset{
    line space/.code={\LineSpace=#1},
    line space=10pt
}
\numberwithin{equation}{section}
\newtheorem{thm}{Theorem}[section]
\newtheorem{prop}[thm]{Proposition}
\newtheorem{cor}[thm]{Corollary}
\newtheorem{lem}[thm]{Lemma}
\theoremstyle{plain}
\newtheorem*{theorem*}{Theorem}
\theoremstyle{definition}
\newtheorem{defn}[thm]{Definition}
\newtheorem{claim}[thm]{Claim}
\theoremstyle{remark}
\newtheorem{rem}[thm]{Remark}
\theoremstyle{example}
\newcommand{\RR}{\mathbb{R}}
\newcommand{\bound}{\partial_T}
\newcommand{\hull}{\textup{Hull}}
\begin{document}
\title{Embedding products of trees into higher rank}
\author[Oussama Bensaid]{Oussama Bensaid}
\author[Thang Nguyen]{Thang Nguyen}

\address{Max Planck Institute for Mathematics, Vivatsgasse 7, 53111 Bonn, Germany}
\email{bensaid@mpim-bonn.mpg.de}

\address{Department of Mathematics, Florida State University,    Tallahassee, FL, 32304}
\email{tqn22@fsu.edu}

\thanks{Mathematics Subject Classification : 20F65, 20F69, 30L05, 53C35.}
\thanks{TN is partially supported by Simons Travel Support for Mathematicians grants MPS-TSM-00002547.}
\maketitle
\begin{abstract}
    We show that there exists a quasi-isometric embedding of the product of $n$ copies of $\mathbb{H}_{\mathbb{R}}^2$ into any symmetric space of non-compact type of rank $n$, and there exists a bi-Lipschitz embedding of the product of $n$ copies of the $3$-regular tree $T_3$ into any thick Euclidean building of rank $n$ with co-compact affine Weyl group. This extends a previous result of Fisher--Whyte. The proof is purely geometrical, and the result also applies to the non Bruhat--Tits buildings.
\end{abstract}
\maketitle
\section{Introduction}
Symmetric spaces of non-compact type and Euclidean buildings are important classes of non-positively curved metric spaces. They possess large symmetry groups and structures that often distinguish them from other spaces, and also from one another, even if one only considers the coarse geometry. The latter approach is part of Gromov's program to classify spaces and groups from their coarse geometry, and was partly motivated by a remarkable theorem of Mostow \cite{mostow1973strong}. Some of the well-known theorems in this direction are by Pansu \cite{Pansu}, Schwartz \cite{schwartz1995quasi,Schwartz1996}, Kleiner--Leeb \cite{kleiner1997rigidity}, Eskin--Farb \cite{eskin1997quasi}, Eskin \cite{eskin1998quasi}, and Drutu \cite{drutu2000quasi}.

\medskip \noindent
Besides distinguishing these spaces, it is also interesting to study their relationships, especially which space is a totally geodesic subspace of another. While this question can be answered satisfactorily from the classification of semi-simple Lie groups and Lie triple systems, its coarse version is much more subtle. In other words, one might ask whether one space can be quasi-isometrically embedded into another one, or whether there is an obstruction to the existence of such an embedding. While there are many examples of isometric and quasi-isometric embeddings between rank one symmetric spaces, which can be constructed by a result of Bonk--Schramm \cite{bonk2011embeddings}, and examples of quasi-isometric embeddings from rank one into higher rank by Brady--Farb \cite{brady1998filling}, see also \cite{leebcharac, leuzinger2003bi} for Euclidean buildings, examples of embeddings between two spaces of equal and higher rank are very limited.

\medskip \noindent
Recently, Fisher and Whyte \cite{fisher2018quasi} gave a sufficient condition for the existence of a quasi-isometric embedding between two symmetric spaces of non-compact type of equal rank. This condition is formulated in terms of the existence of a linear map between Cartan subalgebras preserving kernel of roots. A quasi-isometric embedding induced by this map is called an $AN$-map. They also provided examples of embeddings when their condition is held. In particular, they constructed a quasi-isometric embedding from the product of $n$ copies of the real hyperbolic plane into the symmetric spaces of $\textup{SL}_{n+1}(\mathbb{R})$ and $\textup{Sp}_{2n}(\mathbb{R})$. In \cite{nguyen2021quasi}, the second author gave a splitting decomposition of embeddings. Namely, any embedding between spaces of equal rank is close to a product of embeddings into irreducible targets. He further gave examples of quasi-isometric embeddings in rank $2$ which are not $AN$-maps. In this paper, we generalize this approach by showing the following.
\begin{thm}\label{mainthm}(see \cref{mainthm general} for a more general statement)
\noindent
\begin{itemize}
        \item[(1)] If $X$ is a thick Euclidean building of rank $n$ with co-compact affine Weyl group, there exists a bi-Lipschitz embedding $T_3 \times \dots \times T_3 \to X$ of the product of $n$ copies of the 3-regular tree into $X$.
        \item[(2)] If $X$ is a symmetric space of non-compact type of rank $n$, there exists a quasi-isometric embedding $\mathbb{H}_{\mathbb{R}}^2 \times \dots \times \mathbb{H}_{\mathbb{R}}^2 \to X$ of the product of $n$ copies of the real hyperbolic plane into $X$.
    \end{itemize} 
\end{thm}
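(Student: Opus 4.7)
The plan is to work inside a fixed maximal flat (or apartment) $F \subset X$ with a chosen regular basepoint $p \in F$, and to build the embedding out of $n$ independent rank-one pieces, one for each of a choice of simple root directions. Pick simple roots $\alpha_1, \dots, \alpha_n$ of $X$ with dual basis vectors $e_1, \dots, e_n \in F$; these give $n$ families of parallel singular hyperplanes in $F$, one family perpendicular to each $e_i$.

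For each $i$, I would first produce a concrete copy of $T_3$ (in case (1)) or $\mathbb{H}_{\mathbb{R}}^2$ (in case (2)) through $p$ whose ``trunk'' is the geodesic of $F$ in direction $e_i$. In the building case, thickness furnishes, at every singular hyperplane perpendicular to $e_i$, at least two apartments other than $F$ sharing a half-apartment with $F$ across that hyperplane; iterating this branching produces a convex sub-building that is bi-Lipschitz to $T_3 \times \mathbb{R}^{n-1}$, and projecting out the transverse flat factor extracts $T_3$. The co-compactness hypothesis on the affine Weyl group is exactly what keeps this branching locally uniform, so that the bi-Lipschitz constants do not degenerate. In the symmetric space case, the analogous role is played by the totally geodesic $\mathbb{H}_{\mathbb{R}}^2$ associated with $\alpha_i$, namely the orbit through $p$ of the rank-one $\mathrm{SL}_2$-subgroup attached to $\alpha_i$.

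The next step is to combine the $n$ pieces into a single map $\phi$. I would parameterise the source using horocyclic coordinates, writing a point of the $i$-th factor as a pair (height, horocyclic coordinate), and define $\phi$ by first moving along $F$ according to the $n$ heights and then applying the $n$ transverse horocyclic motions inside the rank-one subobjects from the previous step. The upper bound on distortion is immediate, since each transverse motion is $1$-Lipschitz and the heights contribute via a linear embedding $\mathbb{R}^n \to F$ with bounded comparison constants between the Euclidean and $\mathrm{CAT}(0)$ metrics on $F$.

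The main obstacle is the lower bound. In the building case I expect a genuine (not merely coarse) splitting to be visible: the branching along the $n$ independent wall directions is performed inside a subcomplex whose combinatorics factor as a product, and this is what should deliver the strict bi-Lipschitz statement in (1). In the symmetric space case, the simple root groups $U_{\alpha_i}$ do \emph{not} commute, and their commutators lie in higher root groups, which could a priori shear the would-be product structure. My plan is to handle this by showing that the ``flat coordinates'' of $\phi(x)$ are recovered up to bounded error by nearest-point projection onto a suitable maximal flat through $\phi(x)$, so that the $\mathbb{R}^n$-component of the source distance is controlled from below by the target distance; inside each fibre of this projection the transverse contribution is then read off inside the corresponding rank-one $\mathbb{H}_{\mathbb{R}}^2$ via its own nearest-point projection. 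Making this decoupling argument quantitative, while keeping the whole construction purely geometric so that it applies to non Bruhat--Tits buildings as well, is where I expect the real work of the proof to lie.
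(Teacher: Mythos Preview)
Your proposal differs substantially from the paper's route and contains a genuine gap precisely at the point you yourself flag as ``where the real work lies.''

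The paper does not build the embedding directly. It fixes a simplex $\Delta$ in $\partial_T F_0$, lets $X_\Delta$ be the union of all maximal flats with $\Delta$ in their visual boundary, and defines projections $\pi_i : X \to CS(s_i)$ onto rank-one cross sections via strong asymptotic classes (Definition~\ref{proj}). The main work is Theorem~\ref{mainthm general}: $\pi=(\pi_1,\dots,\pi_n)$ is a bi-Lipschitz (resp.\ quasi-isometric) equivalence $X_\Delta \to \prod_i CS(s_i)$, and $X_\Delta \hookrightarrow X$ is itself undistorted. Theorem~\ref{mainthm} then follows in one line, because each $CS(s_i)$ is a thick tree (resp.\ a rank-one symmetric space).

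The gap in your plan is the choice of \emph{simple} roots. The paper's injectivity argument across different flats (Step~2, Substep~1 of the proof of Theorem~\ref{mainthm general2}) hinges on a combinatorial property of the boundary vertices $\xi_1,\dots,\xi_n$ that simple-root directions do \emph{not} enjoy: they must be \emph{maximally distributed} (Proposition~\ref{choice}), meaning every singular hemisphere containing all of them has $n-1$ of them on its bounding wall. The vertices of a single chamber already fail this in type $A_2$, where both sit in the interior of the hemisphere bounded by the remaining wall; the paper remarks explicitly that with chamber vertices the map $\pi$ is no longer injective once one compares points in different apartments. Your construction---move in $F$ by the heights, then apply horocyclic moves along the $\alpha_i$---is essentially an attempt to invert such a $\pi$, and with the wrong $\Delta$ that inverse is not well defined: two points in different apartments can share all flat and transverse data.

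For symmetric spaces your idea is close in spirit to the $AN$-map recorded in the Appendix, but there too the crux is the choice of roots: one must take positive roots with $\alpha_i+\alpha_j$ never a root (highest root, then highest root outside its span, etc.), so that the associated one-parameter unipotent groups genuinely commute and the product map is a homomorphism into $AN$. Simple roots do not have this property; their commutators land in higher root spaces, and your proposed ``decoupling by nearest-point projection onto a suitable maximal flat'' neither specifies the flat nor explains why the accumulated shear is bounded. Without commutation---or its geometric substitute, maximal distribution at infinity---the lower bound does not go through.
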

\noindent
Our approach can be regarded as a geometric $AN$-map, which is much more flexible than the one introduced by Fisher--Whyte, especially for Euclidean buildings. In particular, the embeddings of products of trees into thick Euclidean buildings also hold for the exotic ones. For example, the ones of type $\Tilde{A_2}$ \cite{zbMATH03944686, van1987non, barre2000immeubles}, and the ones whose Weyl group does not come from a root system  \cite{hitzelberger2010non,berenstein2012affine}.

\medskip\noindent
Since $T_3$ embeds quasi-isometrically into $\mathbb{H}_{\mathbb{R}}^2$, it follows that the product of $n$ copies of $T_3$ embeds quasi-isometrically into any symmetric space of non-compact type of rank $n$. Moreover, by combining \cref{mainthm} with the quasi-isometric embeddings of Gromov-hyperbolic groups into products of binary trees \cite{buyalo2007embedding}, we get exotic quasi-isometric embeddings of Gromov hyperbolic groups into Euclidean buildings and symmetric spaces.
\begin{cor}
    Every Gromov hyperbolic group $G$ admits a quasi-isometric embedding into any thick Euclidean building with co-compact affine Weyl group or symmetric space of non-compact type of rank $n+1$, where $n$ is the topological dimension of $\partial_{\infty}G$.
\end{cor}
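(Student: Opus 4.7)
The plan is to chain two quasi-isometric embeddings, exploiting the basic fact that a composition of quasi-isometric embeddings is again a quasi-isometric embedding. The only content beyond what \cref{mainthm} already provides is to connect the binary tree $T_2$ (which appears in the Buyalo--Schroeder embedding) to the $3$-regular tree $T_3$ (which appears in \cref{mainthm}).

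First, I would invoke the theorem of Buyalo--Schroeder \cite{buyalo2007embedding} to quasi-isometrically embed the Gromov hyperbolic group $G$ into a product $T_2 \times \cdots \times T_2$ of $n+1$ binary trees, where $n = \dim \partial_{\infty} G$. Next, I would embed $T_2$ isometrically into $T_3$ as a subtree (just delete one edge at each vertex), which upon taking products yields an isometric embedding $T_2 \times \cdots \times T_2 \hookrightarrow T_3 \times \cdots \times T_3$ of the corresponding $(n+1)$-fold products.

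For the building case, I would then invoke \cref{mainthm}(1) to obtain a bi-Lipschitz embedding of $T_3 \times \cdots \times T_3$ (with $n+1$ factors) into any thick Euclidean building $X$ of rank $n+1$ with co-compact affine Weyl group, and compose with the two embeddings above. For the symmetric space case, I would either invoke \cref{mainthm}(2) and precompose on each factor with the standard quasi-isometric embedding $T_2 \hookrightarrow \mathbb{H}_{\mathbb{R}}^2$, or equivalently use the quasi-isometric embedding $T_3 \times \cdots \times T_3 \hookrightarrow X$ already recorded in the paragraph preceding the statement, which is obtained from \cref{mainthm}(2) via $T_3 \hookrightarrow \mathbb{H}_{\mathbb{R}}^2$.

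I do not anticipate a genuine obstacle here: the geometric difficulty is packaged in \cref{mainthm}, and the corollary is really a formal consequence of it together with the Buyalo--Schroeder embedding result. The only care required is to check that the quasi-isometry constants behave correctly under composition, which is automatic.
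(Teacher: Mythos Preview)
Your proposal is correct and matches the paper's approach exactly: the paper does not give a separate proof of this corollary, but simply records it as the composition of the Buyalo--Dranishnikov--Schroeder embedding into a product of $n+1$ binary trees with \cref{mainthm}. The only quibble is notational: the ``binary tree'' in \cite{buyalo2007embedding} is already quasi-isometric to $T_3$ (the root has degree $2$, every other vertex degree $3$), so the intermediate step $T_2 \hookrightarrow T_3$ is essentially vacuous rather than obtained by deleting one edge at every vertex.
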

\noindent 
For example, $\mathbb{H}_{\mathbb{R}}^{n}$ embeds quasi-isometrically into any such Euclidean building of rank $n$. By combining with the quasi-isometric embeddings $\mathbb{H}_{\mathbb{R}}^n \to (\mathbb{H}_{\mathbb{R}}^2)^{n-1}$ of \cite{brady1998filling} instead, we get 
\begin{cor}
    For any $n \geq 1$, $\mathbb{H}_{\mathbb{R}}^{n+1}$ embeds quasi-isometrically into any symmetric space of non-compact type of rank $n$.
\end{cor}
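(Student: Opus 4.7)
The plan is to obtain the embedding as a two-step composition: first compress $\mathbb{H}_{\mathbb{R}}^{n+1}$ into a product of $n$ hyperbolic planes, then use the main theorem to push that product into the given rank-$n$ symmetric space.

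More precisely, let $X$ denote an arbitrary symmetric space of non-compact type of rank $n$. The first step is to invoke the Brady--Farb construction from \cite{brady1998filling}, which produces a quasi-isometric embedding
\[
    \Phi : \mathbb{H}_{\mathbb{R}}^{n+1} \longrightarrow \underbrace{\mathbb{H}_{\mathbb{R}}^2 \times \dots \times \mathbb{H}_{\mathbb{R}}^2}_{n \text{ copies}}.
\]
(The Brady--Farb result, as stated above, embeds $\mathbb{H}_{\mathbb{R}}^m$ into a product of $m-1$ copies of $\mathbb{H}_{\mathbb{R}}^2$; taking $m = n+1$ yields the desired target with exactly $n$ factors, matching the rank of $X$.) The second step is to apply part (2) of \cref{mainthm}, which furnishes a quasi-isometric embedding
\[
    \Psi : \mathbb{H}_{\mathbb{R}}^2 \times \dots \times \mathbb{H}_{\mathbb{R}}^2 \longrightarrow X.
\]

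To conclude, I would form the composition $\Psi \circ \Phi : \mathbb{H}_{\mathbb{R}}^{n+1} \to X$ and verify that it is itself a quasi-isometric embedding. This is a routine general fact about quasi-isometric embeddings: if $f : Y \to Z$ and $g : Z \to W$ have quasi-isometry constants $(L_f, C_f)$ and $(L_g, C_g)$ respectively, then $g \circ f$ is an $(L_f L_g,\, L_g C_f + C_g)$-quasi-isometric embedding. Instantiating with the constants of $\Phi$ and $\Psi$ completes the argument.

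Because both ingredients are already established (the first in the cited Brady--Farb paper, the second as the main theorem of the present work), there is essentially no genuine obstacle to overcome here; the corollary is a direct combinatorial matching of dimensions and ranks followed by composition. The only point that requires a moment's care is ensuring that the number of $\mathbb{H}_{\mathbb{R}}^2$ factors produced by Brady--Farb agrees with the number of factors that \cref{mainthm} can accommodate in $X$, which is exactly why the statement is phrased with $\mathbb{H}_{\mathbb{R}}^{n+1}$ on the source side and rank $n$ on the target side.
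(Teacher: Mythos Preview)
Your proposal is correct and matches the paper's own justification: the corollary is stated immediately after the sentence ``By combining with the quasi-isometric embeddings $\mathbb{H}_{\mathbb{R}}^n \to (\mathbb{H}_{\mathbb{R}}^2)^{n-1}$ of \cite{brady1998filling} instead, we get,'' and the intended argument is exactly the composition $\mathbb{H}_{\mathbb{R}}^{n+1} \xrightarrow{\text{Brady--Farb}} (\mathbb{H}_{\mathbb{R}}^2)^n \xrightarrow{\text{Thm~\ref{mainthm}(2)}} X$ that you wrote out.
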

\noindent
Finally, let us note that the quasi-isometric embedding of the product of copies of $\mathbb{H}_{\mathbb{R}}^2$ into a symmetric space can also obtained by an $AN$-map. This was pointed out to us by Yves Benoist, and we give a proof in the Appendix.

\subsection*{Main result} 
\noindent
\\We refer to \cref{background} for the background material. Let $X$ be a Euclidean building or a symmetric space of non-compact type of rank $n$, and let $F_0$ be a fixed apartment/maximal-flat. Given a wall $H \subset F_0$, i.e.\ a singular $(n-1)$-dimensional flat, we consider its cross section $CS(H)$, which can be seen as the set of $(n-1)$-flats in $X$ which are parallel to $H$, and we define a projection map $X \to CS(H)$ (see \cref{section background preliminary}). This projection map is defined by considering a suitable point $\eta$ in the boundary of $H$, and assigning to each $x \in X$ the unique $(n-1)$-flat parallel to $H$ to which the geodesic ray $[x, \eta)$ is strongly asymptotic, see \cref{proj}. We endow the cross sections with the Hausdorff distance. This projection map is a variation of the projection onto the space of strong asymptotic classes introduced by Leeb \cite{leebcharac}.

\medskip\noindent
If $\Delta$ is a subset of $\bound F_0$, we denote by $X_\Delta$ the union of all apartments/maximal-flats in $X$ that contain $\Delta$ in their boundary at infinity. 

\medskip\noindent
We show that there exist walls $H_1,\dots,H_n$ in $F_0$, projection maps $\pi_i : X \to CS(H_i)$, and $\Delta \subset \bound F_0$ a suitable union of chambers such that the following map
$$\pi : X_{\Delta} \to CS(H_1) \times \dots \times CS(H_n),$$ 
which is the restriction of the product map $\pi_1 \times \dots \times \pi_n$ to $X_{\Delta}$, satisfies the following. 
\begin{thm}\label{mainthm general}
\noindent
    \begin{itemize}
        \item[(1)] If $X$ is a Euclidean building, $\pi$ is a bi-Lipschitz map. Moreover, the inclusion of $X_{\Delta}$, equipped with the path-metric, in $X$ is a bi-Lipschitz embedding, therefore it induces a bi-Lipschitz embedding from $CS(H_1) \times \dots \times CS(H_n)$ into $X$.
        \item[(2)] If $X$ is a symmetric space of non-compact type, $\pi$ is a quasi-isometry. Moreover, the inclusion of some $\delta$-neighborhood of $X_{\Delta}$, equipped with the path-metric, in $X$ is a quasi-isometric embedding, therefore it induces a quasi-isometric embedding from $ CS(H_1) \times \dots \times CS(H_n)$ into $X$.
    \end{itemize}
\end{thm}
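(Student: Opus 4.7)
The plan is in three stages: choose the combinatorial data $\Delta$, $H_i$, $\eta_i$; show that $\pi := (\pi_1,\dots,\pi_n)$ is bi-Lipschitz (resp.\ a quasi-isometry) from $X_{\Delta}$ onto $CS(H_1) \times \dots \times CS(H_n)$; and then show that the inclusion $X_{\Delta} \hookrightarrow X$, with the path metric on $X_{\Delta}$, is bi-Lipschitz (resp.\ a quasi-isometric embedding after $\delta$-thickening). For the setup, fix a regular ideal Weyl chamber $C \subset \bound F_0$ together with its opposite $\bar C$, and take $\Delta := C \cup \bar C$. Pick codimension-one walls $H_1,\dots,H_n \subset F_0$ through a common basepoint so that each $H_i$ has a codimension-one face of $C$ in its ideal boundary; arrange that the normals of the $H_i$ in $F_0$ are linearly independent with pairwise angles uniformly bounded away from $0$ and $\pi$. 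Pick $\eta_i$ in the interior of $\bound H_i \cap \bar C$ (nonempty by the previous choice). With this data, every ray $[x,\eta_i)$ with $x\in X_\Delta$ is strongly asymptotic to a unique $(n-1)$-flat parallel to $H_i$, so $\pi_i$ is well-defined on $X_\Delta$ via \cref{proj}.

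Each $\pi_i$ is $1$-Lipschitz: two rays strongly asymptotic to $\eta_i$ limit onto parallel flats at Hausdorff distance at most the distance of their basepoints. For the co-Lipschitz bound on $X_\Delta$, first consider $x,y$ lying in a common apartment $F \supset \Delta$: on $F$ the map $\pi_i|_F$ agrees, up to an isometry of cross-sections, with the Euclidean orthogonal projection of $F \cong \mathbb{R}^n$ onto the direction normal to $H_i$, so the uniformly bounded angles between these normals make $\pi|_F$ bi-Lipschitz with constants depending only on the Weyl group. For $x,y$ in distinct apartments $F_x, F_y \subset X_\Delta$, the shared ideal data $C\cup\bar C$ at infinity forces the strong-asymptote projections to glue coherently across apartments of $X_\Delta$, which extends the bi-Lipschitz bound (exactly in the building case, with a bounded additive error in the symmetric-space case).

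For the path-metric statement I use a tripod argument. For $x,y \in X_\Delta$ in apartments $F_x,F_y \supset \Delta$, fix a regular $\xi \in C$ and take $x_T, y_T$ at distance $T \asymp d_X(x,y)$ along the rays $[x,\xi)$, $[y,\xi)$. Since $\xi \in \bound F_x \cap \bound F_y$, the two apartments share a Weyl sector with apex asymptotic to $\xi$, so $x_T$ and $y_T$ lie in a common apartment of $X_\Delta$ once $T$ is large enough (building case), or within $O(\delta)$ of a common apartment (symmetric-space case). Concatenating $x \to x_T$ in $F_x$, $x_T \to y_T$ in that common apartment, and $y_T \to y$ in $F_y$ yields a path in $X_\Delta$ (respectively, in its $\delta$-neighborhood) of length $\leq C\, d_X(x,y) + O(\delta)$. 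Together with the previous step, this gives the desired bi-Lipschitz / quasi-isometric embedding $CS(H_1)\times\cdots\times CS(H_n) \hookrightarrow X$.

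The hard step will be the transition segment $x_T \to y_T$ in the symmetric-space case: apartments sharing $\Delta$ at infinity only converge asymptotically along $\xi$, so for finite $T$ the transition is only approximate and must be absorbed into the $\delta$-thickening, with its length controlled uniformly in $T$. This will require careful Busemann-function estimates and the exponential contraction of strong-asymptote classes along the regular direction $\xi$. In the Euclidean building case this is much easier, because two apartments sharing $\Delta$ at infinity coincide on a Weyl sector with apex at $\xi$, making the transition exact.
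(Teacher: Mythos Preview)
Your setup has a fatal error: taking $\Delta = C \cup \bar C$ forces $X_\Delta = F_0$. Two opposite chambers at infinity lie in a \emph{unique} apartment/maximal flat, so the only flat containing both $C$ and $\bar C$ in its ideal boundary is $F_0$ itself. With $X_\Delta$ reduced to a single flat, $\pi$ becomes a linear isomorphism $\mathbb{R}^n \to \mathbb{R}^n$ and its image in $CS(H_1)\times\cdots\times CS(H_n)$ is a product of geodesic lines, not the whole product of cross sections. There is nothing to ``glue coherently across apartments'', and the statement you are trying to prove collapses. The paper is explicit about this monotonicity: the more chambers you put in $\Delta$, the smaller $X_\Delta$ becomes, and $\Delta = \bound F_0$ already gives $X_\Delta = F_0$.

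The paper's actual $\Delta$ is the convex hull of \emph{maximally distributed vertices} $\xi_1,\dots,\xi_n$ (Definition~\ref{def max distributed vertices}, Proposition~\ref{choice}): vertices not pairwise opposite, spanning no common wall, with the crucial property that any singular hemisphere containing all of them has $n-1$ of them on its boundary wall. The walls are $s_i = $ the wall spanned by $\{\xi_j\}_{j\ne i}$, and $\eta_i$ is a regular point of $\hull(\{\xi_j\}_{j\ne i})$. This combinatorial condition is exactly what drives the injectivity of $\pi$ across \emph{different} flats of $X_\Delta$ --- the step the paper calls ``the core of the proof''. The argument is: if $x\in F_x\setminus F_y$ with $\Delta\subset\bound F_x\cap\bound F_y$, then $F_x\cap F_y$ is bounded by singular half-spaces $M$ each containing all $\xi_i$ at infinity; maximal distribution forces some $\eta_i\in\bound(\partial M)$, so $[x,\eta_i)$ stays parallel to the wall of $M$ and cannot be strongly asymptotic to $F_y$, contradicting $\pi_i(x)=\pi_i(y)$. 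Your sentence ``the shared ideal data forces the projections to glue coherently'' is precisely where this nontrivial mechanism is needed, and your setup provides no analogue of it.

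Two further gaps: you never argue surjectivity (resp.\ quasi-surjectivity) of $\pi$, which the paper handles in a separate step; and your tripod argument for the path-metric statement asserts that $x_T,y_T$ eventually lie in a common apartment \emph{of $X_\Delta$}, but you do not explain why the apartment they share contains $\Delta$ at infinity. The paper's Step~4 instead bounds the path length inside $F_x\cup F_y$ directly (Propositions~\ref{path in union of flats} and~\ref{path in union of flats symmetric space}), using the branching/exponential-convergence of rays to a fixed regular $\eta$, without needing a third apartment.
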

\noindent 
The target space is equipped with the $L^1$ product metric. We refer to \cref{section proof main result} for the construction of the walls $H_i$ and $\Delta$. \cref{mainthm} is an immediate consequence of \cref{mainthm general}:
\begin{proof}[Proof of \cref{mainthm}]
    $(1)$ If $X$ is a thick Euclidean building of rank $n$ with co-compact affine Weyl group, then all the cross sections are thick Euclidean buildings of rank 1, i.e.\ thick metric trees \cite{leebcharac}. Therefore there exists a bi-Lipchitz embedding $T_3 \to CS(H_i)$. 
    \\ $(2)$ When $X$ is a symmetric space of non-compact type, the cross sections are rank one symmetric space of non-compact type \cite[Chap.\ 2.20]{eberlein1996geometry}, therefore $\mathbb{H}_{\mathbb{R}}^2 \to CS(H_i) $ isometrically.
\end{proof}
\begin{rem}
    The constants of the bi-Lipchitz/quasi-isometric embeddings do not depend on the apartment/maximal-flat $F_0$ we started with. Moreover, $F_0$ is contained in the image of such embeddings.
\end{rem}
\noindent
Let us note that when $X$ is a Euclidean building (resp.\ a symmetric space), the fact that the inclusion of $X_\Delta$ (resp.\ its $\delta$-neighbordhood), in $X$ is a bi-Lipschitz (resp.\ quasi-isometric) embedding, is a general result, as shown in step $4$ of the proof of \cref{mainthm general}, and is true when $\Delta$ is any union of chambers in the boundary of some fixed apartment/maximal-flat $F_0$. For example, if $\Delta$ consists of only one chamber, then $X_\Delta =X$. If $\Delta = \bound F_0$, then $X_\Delta = F_0$. Obviously, the more chambers we add to $\Delta$, the smaller $X_\Delta$ becomes. The subset $\Delta$ in \cref{mainthm general} is the ``smallest possible'' for which the map $\pi$ is injective (resp.\ quasi-injective).
\subsection*{About the proof}\label{subsect overview}

The proof of Theorem \ref{mainthm general} will be done in four steps. If $X$ is a Euclidean building (resp.\ a symmetric space), we will start by showing that $\pi$ is a bi-Lipschitz embedding when restricted to a flat containing $\Delta$ at infinity. Then, in step $2$, which represents the core of the proof, we show the general case, i.e.\ that it is a bi-Lipschitz (resp.\ quasi-isometric) embedding. In step $3$, we show that it is surjective (resp.\ quasi-surjective), and finally that the inclusion of $X_{\Delta}$ (resp.\ a thickening of $X_{\Delta}$), equipped with the path metric, in $X$ is a bi-Lipschitz (resp.\ quasi-isometric) embedding. 

\subsection*{Organisation of the paper}

We start in \cref{background} by recalling briefly the parallel sets and cross sections in symmetric spaces and Euclidean buildings. After giving some preliminary lemmas in \cref{subsec prelim}, we define the projection map onto a cross section, to which \cref{subsect proj map} is devoted. We recall the generalized Iwasawa decomposition for semi-simple Lie groups in \cref{section Iwasawa}. In \cref{section max distrib vertices}, we define the maximally distributed vertices in a spherical Coxeter complex. Finally, the main result is proved in \cref{section proof main result}.

\subsection*{Acknowledgements}
We thank Yves Benoist for pointing out to us the $AN$-map for symmetric spaces. We are grateful to the Max-Planck Institute for Mathematics in Bonn for its financial support. The second author thanks Institut des Hautes \'Etudes Scientifiques, Institut Henri Poincar\'e and LabEx CARMIN for their support and hospitality.
\section{Background and preliminary results}\label{section background preliminary}
\subsection{Background}\label{background}
We recall that if $X$,$Y$ are two metric spaces, and $f:X \to Y$ a map,
\begin{itemize}
    \item[$(i)$] $f$ is a \textit{bi-Lipschitz embedding} if there exists $\lambda \geq 1$ such that for any $x,x' \in X$, 
$$\frac{1}{\lambda} d_X(x,x') \leq d_Y(f(x),f(x')) \leq \lambda d_X(x,x').$$
If moreover $f$ is surjective, it is called a \textit{bi-Lipschitz equivalence.}
    \item[$(ii)$] $f$ is a \textit{quasi-isometric embedding} if there exist $\lambda \geq 1$ and $C \geq 0$ such that for any $x,x' \in X$, 
$$\frac{1}{\lambda} d_X(x,x') -C \leq d_Y(f(x),f(x')) \leq \lambda d_X(x,x') +C.$$
If moreover there exists $M \geq 0$ such that for any $x \in X$ there exists $y \in Y$ such that $d_Y(f(x),y) \leq M$, $f$ is called a \textit{quasi-isometry.}
\end{itemize}

\medskip\noindent
We refer to \cite{bridson2013metric} for the background material on CAT(0) spaces, and to \cite{eberlein1996geometry},\cite{kleiner1997rigidity}, and \cite{leebcharac} for symmetric spaces and Euclidean buildings.

\medskip\noindent
Let $X$ is a complete CAT(0) space. We denote by $\partial X$ its visual boundary, and we equip it with the \textit{angular metric} $\angle$ defined, for any $\xi,\eta \in \partial X$, as
$$ \, \angle(\xi,\eta) = \sup_{x\in X}   \angle_x(\xi,\eta).$$
Let $X$ be a Euclidean building or a symmetric space of non-compact type. We recall that its visual boundary $\partial X$, when equipped with the angular metric $\angle$, inherits a spherical building structure, and we denote it by $\bound X$. Let us note that we consider the angular metric $\angle$ and not the Tits metric on the boundary, i.e.\ the associated length metric\footnote{When $\textup{rank}(X) \geq 2$, these two metrics on $\partial X$ coincide}.

\medskip\noindent
The apartments of $\bound X$ are endowed with a structure of a spherical Coxeter complex \cite[sect.\ 3.1]{kleiner1997rigidity}. A \textit{spherical Coxeter complex} is a unit sphere $S$ with a finite Weyl group $W < \textup{Isom}(S)$ generated by reflections at \textit{walls}, i.e.\ totally geodesic subspheres of codimension $1$. A \textit{singular sphere} $s \subset S$ is an intersection of walls.

\medskip\noindent
The apartments in $\bound X$ correspond to boundaries of apartments/maximal-flats in $X$. Each apartment/maximal-flat $F \subset X$ is endowed with a structure of a Euclidean Coxeter complex \cite[sect.\ 4.1]{kleiner1997rigidity}. A \textit{Euclidean Coxeter complex} is a Euclidean space $E$ with an affine Weyl
group $W_{aff} < \textup{Isom}(E)$ generated by reflections at \textit{walls}, i.e.\ affine subspaces of
codimension $1$, so that the image of $W_{aff}$ in $\textup{Isom}(\bound E)$ is a finite reflection group. We call \textit{flat} any totally geodesic Euclidean subspace of $X$. A \textit{singular flat} in $X$ is an intersection of walls. Finally, a \textit{singular half-space} in $X$ is a half apartment/maximal-flat bounded by a wall.
\begin{rem}
    In the rest of the paper, by abuse of language, if $X$ is a Euclidean building, we will also say maximal-flat to refer to its apartments.
\end{rem}
\noindent
Let $s$ be a singular sphere in $\bound X$. The \textit{parallel set} $P(s)$ of $s$ is the union of all flats with boundary $s$. $P(s)$ is a convex subset of $X$ and is isometric to the product 
$$ P(s) = \RR ^{\textup{dim}(s)+1} \times CS(s).$$
$CS(s)$ is called the \textit{cross section} of $s$, and it can be seen as the set of flats with boundary $s$. When $X$ is a Euclidean building (resp.\ a symmetric space of non-compact type) of rank $n$, $CS(s)$ is a Euclidean building (resp.\ a symmetric space of non-compact type) of rank $n-\textup{dim}(s)-1$, see \cite[Chap.\ 2.20]{eberlein1996geometry},\cite[Sect.\ 3]{leebcharac},\cite[Sect.\ 4.8]{kleiner1997rigidity}. If $F$ is a flat such that $\bound F = s$, we define $P(F) := P(s)$. 
\subsection{Preliminary results}\label{subsec prelim}
Unless stated otherwise, $X$ is either a symmetric space of non-compact type or a Euclidean building. Symmetric spaces are supposed of non-compact type.

\medskip\noindent
If $x \in X$ and $\eta \in \bound X$, we denote by $[x,\eta)$ the geodesic ray from $x$ to $\eta$. If $A \subset \bound X$, we denote by $[x, A)$ the cone $\bigcup\big\{[x,a), a \in A \big\}$.

\noindent
If $A \subset X$ and $r \geq 0$, we denote by $N_r(A) = \{x \in X \textup{ such that } d_X(x,A) \leq r\}$.
\begin{lem}\label{parallel of interior pt}
    Let $X$ be a symmetric space or a Euclidean building, $\alpha$ a geodesic in $X$ with endpoints $\{\eta,\hat{\eta}\}$, and $F$ a flat containing $\alpha$. If $\eta$ is an interior point of a top-dimensional cell of $\bound F$, then $P(\alpha) = P(F)$. In other words, $P\left(\{\eta, \hat{\eta} \}\right) = P(\bound F)$.
\end{lem}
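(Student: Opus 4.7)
The plan is to prove the two inclusions $P(\bound F) \subseteq P(\alpha)$ and $P(\alpha) \subseteq P(\bound F)$ separately, where $P(\alpha) = P(\{\eta,\hat\eta\})$ denotes the union of all geodesic lines in $X$ with endpoints $\eta$ and $\hat\eta$, and $P(\bound F) = P(F)$ denotes the union of all flats in $X$ with boundary $\bound F$.

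The direction $P(\bound F) \subseteq P(\alpha)$ requires no regularity hypothesis: given $x \in P(\bound F)$, choose a flat $F'$ through $x$ with $\bound F' = \bound F$; since $\eta,\hat\eta \in \bound F'$, the unique geodesic line in $F'$ through $x$ with endpoints $\{\eta,\hat\eta\}$ exhibits $x$ as a point of $P(\alpha)$.

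For the reverse inclusion, fix $x \in P(\alpha)$ and a geodesic line $\alpha'$ through $x$ with endpoints $\{\eta,\hat\eta\}$. The hypothesis that $\eta$ is interior to a top-dimensional cell of $\bound F$ says precisely that $\eta$ is a regular point of the spherical building $\bound X$, and hence that $\alpha'$ is a regular geodesic. In both the symmetric space and the Euclidean building settings, a regular geodesic is contained in a unique maximal flat $F'$. The boundary $\bound F'$ is then an apartment of $\bound X$ containing the antipodal pair of regular points $\{\eta,\hat\eta\}$; but any such apartment must contain the unique chamber of which $\eta$ is an interior point together with its opposite, and two opposite chambers lie in a unique apartment of a spherical building. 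Consequently $\bound F' = \bound F$, and so $x \in F' \subseteq P(\bound F)$.

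The key ingredients, which I would invoke rather than reprove, are the two ``uniqueness under regularity'' facts: a regular geodesic line extends to a unique maximal flat, and an antipodal pair of regular points in $\bound X$ lies in a unique apartment. Both are standard, but since $X$ is allowed to be either a symmetric space or a Euclidean building, the mild subtlety is to phrase the argument in a way that covers both cases uniformly; this is precisely where the boundary spherical building $\bound X$ (common to both settings) does the work, reducing everything to the single spherical building axiom about opposite chambers.
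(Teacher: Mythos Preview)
Your argument has a genuine gap: you assert that ``the hypothesis that $\eta$ is interior to a top-dimensional cell of $\bound F$ says precisely that $\eta$ is a regular point of the spherical building $\bound X$,'' but this is only true when $F$ is a \emph{maximal} flat. The lemma is stated for an arbitrary flat $F$, and in the paper's principal applications (e.g.\ Lemma~\ref{entering parallel} and the projection maps of Section~\ref{section proof main result}) one takes $F$ to be a wall, so that $\bound F = s$ is a codimension-one singular sphere. A top-dimensional cell of $s$ is then a panel, not a chamber, and its interior points are \emph{not} regular in $\bound X$. In that situation the geodesic $\alpha'$ through $x$ is not regular, does not lie in a unique maximal flat, and the uniqueness-of-apartment argument for opposite chambers no longer applies.

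The paper's proof handles this correctly: given $x \in P(\alpha)$, one extends the geodesic through $x$ with endpoints $\eta,\hat\eta$ to some maximal flat $E$ (not unique in general). Since $\eta,\hat\eta$ are interior to cells $c,\hat c$ of $\bound X$ and $\bound E$ is a subcomplex, one gets $c,\hat c \subset \bound E$; these span the singular sphere $s = \bound F$, so $s \subset \bound E$. Inside the Euclidean space $E$ the flat $F' \subset E$ with $\bound F' = s$ then contains $x$. The point is that one does not need $E$ to be unique, only that any such $E$ already contains a copy of $s$ in its boundary. Your argument can be repaired along exactly these lines, but as written it proves only the special case where $F$ is maximal.
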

\begin{proof}
    Since $\alpha \subset F$, then $P(F) \subset P(\alpha)$.
    \\ Let $x \in P(\alpha)$, and let us denote by $c$ the top-dimensional cell of $\bound F$ containing $\eta$ in its interior. Let $\hat{c}$ be its opposite such that $\hat{\eta} \in \hat{c}$. Consider a maximal-flat $E$ in $X$ containing $\eta, \hat{\eta}$, so $c, \hat{c} \subset \bound E$ because  $\eta,\hat{\eta}$ are interior points. Since $s$ is the unique singular sphere spanned by $\eta$ and $\hat{\eta}$, $s \in \bound E$ and $x$ is contained in a flat $F' \subset E$ with boundary $s$.\qedhere
\end{proof}

\medskip\noindent
We say that a geodesic ray $\gamma$ is \textit{strongly asymptotic} to a subset $A \subset X$ if $d_X(\alpha(t),A) \xrightarrow[+\infty] \, 0 $.
\begin{lem}\label{entering parallel}
    Let $X$ be a symmetric space or a Euclidean building, $s$ a singular sphere in $\bound X$, and $\eta$ an interior point of a top-dimensional cell of $s$. For any $x \in X$, $[x,\eta)$ is strongly asymptotic to $P(s)$. Moreover, if $X$ is a Euclidean building, there exists $T \geq 0$ such that for $t \geq T$, $[x,\eta)(t) \in P(s)$.
\end{lem}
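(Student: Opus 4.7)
The plan is to show that $[x,\eta)$ is strongly asymptotic to a canonical ray $[y,\eta)\subset P(s)$ obtained from $x$ via a ``horocyclic'' move. For $X$ a symmetric space with isometry group $G$ and base point $o$, the Iwasawa decomposition $G=KP_\eta$ (where $P_\eta$ stabilizes $\eta$) implies $P_\eta\cdot o=X$, so $x=p\cdot o$ for some $p\in P_\eta$. Using the Langlands decomposition $P_\eta=N_\eta L_\eta$, where $L_\eta$ is the Levi stabilizing both top-dimensional cells $c,\hat{c}$ of $s$ (so that $L_\eta\cdot o=P(s)$) and $N_\eta$ is the unipotent radical, write $p=n\ell$ and set $y:=\ell\cdot o\in P(s)$. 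Since $n\in N_\eta$ fixes $\eta$ pointwise, $[x,\eta)(t)=n\cdot[y,\eta)(t)$, and the displacement
\[
d\bigl([x,\eta)(t),\,[y,\eta)(t)\bigr)\;=\;d\bigl((\exp(-tH)\,n\,\exp(tH))\cdot y,\,y\bigr)
\]
(where $H\in\mathfrak{a}_\eta$ is the unit vector pointing toward $\eta$) tends to $0$, because conjugation by $\exp(-tH)$ scales each positive root space of $\mathfrak{n}_\eta$ by $\exp(-t\alpha(H))\to 0$. Since $[y,\eta)\subset P(s)$ (by convexity of $P(s)$ and $\eta\in s\subset\bound P(s)$), $[x,\eta)$ is strongly asymptotic to $P(s)$.

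\textbf{Building case.} For a Euclidean building $X$ (including the exotic ones, where no global Lie group is available), the argument proceeds by apartments rather than groups. By the apartment system, pick an apartment $A$ containing $x$ with $\eta\in\bound A$, and fix a point $y\in P(s)$ together with an apartment $A_y\subset P(s)$ with $y\in A_y$ and $s\subset\bound A_y$. The hypothesis that $\eta$ is interior to a top-dimensional cell $c$ of $s$ ensures that any chamber of $\bound X$ containing $\eta$ refines $c$, so that a Weyl sector at $\eta$ in direction of a chamber $C\supset c$ is full-dimensional in $P(s)$. Using the apartment-gluing axioms, one produces an apartment isomorphism $A\to A_y$ fixing a sub-Weyl-sector $\Delta\subset A\cap A_y$ pointing toward $\eta$; by the discreteness of the piecewise-Euclidean structure, the tail of $[x,\eta)$ lies in $\Delta$ for $t\geq T$, and hence in $A_y\subset P(s)$. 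This gives the stronger ``eventually in $P(s)$'' conclusion (and in particular strong asymptoticity).

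\textbf{Main obstacle.} The principal technical step is the construction of the horocyclic move: the unipotent element $n$ in the symmetric-space case, and the apartment-folding isomorphism in the building case. This element must be trivial on a neighborhood of $\eta$ that is large enough to make the displacement vanish (or shrink to $0$) along the geodesic flow, while still aligning $x$ with $P(s)$. In the symmetric-space case this is a clean consequence of Iwasawa--Langlands once one identifies $L_\eta\cdot o$ with $P(s)$ (this identification uses \cref{parallel of interior pt}). In the building case it is more delicate---especially for exotic buildings---and reduces to apartment gluing along a Weyl sector at $\eta$, whose full-dimensionality in $P(s)$ is guaranteed precisely by the hypothesis that $\eta$ is interior to a top-dimensional cell of $s$.
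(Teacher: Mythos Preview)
Your symmetric-space argument is essentially the paper's: both decompose $x$ via an Iwasawa/Langlands-type factorisation $G=K A_\eta N_\eta$ (resp.\ $P_\eta=N_\eta L_\eta$), identify the $A_\eta$- (resp.\ $L_\eta$-) orbit of the basepoint with $P(s)$ using \cref{parallel of interior pt}, and then use that conjugation by the geodesic flow contracts $N_\eta$. The only difference is notation.

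Your building argument, however, is both different from the paper's and incomplete. The paper does not use apartment gluing at all; it fixes the antipode $\hat\eta$ of $\eta$ in $s$ and observes that the comparison angle $\angle_{\gamma(t)}(\eta,\hat\eta)$ takes finitely many values (angle rigidity, \cite[Sect.~4.1.2]{kleiner1997rigidity}) and tends to $\angle(\eta,\hat\eta)=\pi$, hence equals $\pi$ for $t\ge T$. At such $t$ the point $\gamma(t)$ lies on a bi-infinite geodesic joining $\eta$ and $\hat\eta$, hence in $P(\{\eta,\hat\eta\})=P(s)$ by \cref{parallel of interior pt}. This is a two-line proof.

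The gap in your version is the step ``the tail of $[x,\eta)$ lies in $\Delta$ for $t\ge T$''. You produce (or rather assert) a common Weyl sector $\Delta=V(z,C)\subset A\cap A_y$ for some chamber $C\supset c$. But $\eta$ lies in the face $c$, i.e.\ on the \emph{boundary} of $C$, not in its interior. In the model apartment $A\cong\mathbb{R}^n$ the sector is cut out by inequalities $\ell_i(\cdot-z)\ge 0$, and for each wall $\{\ell_i=0\}$ through $c$ one has $\ell_i(u_\eta)=0$; then $\ell_i(x-z+tu_\eta)=\ell_i(x-z)$ is independent of $t$ and may be negative. So $[x,\eta)$ need not enter $V(z,C)$ at all, and ``discreteness of the piecewise-Euclidean structure'' does not help. (You also did not justify why $A$ and your arbitrarily chosen $A_y$ share a Weyl sector in the first place; they only share the cell $c$ at infinity, not a chamber.) One can repair this by varying the chamber $C\supset c$ or by passing to the antipode $\hat\eta$---but at that point you are essentially reproducing the paper's angle-rigidity argument, which is far more direct.
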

\begin{proof}
    Let $\hat{\eta}$ be the opposite of $\eta$ in $s$. 
    \\• If $X$ is a symmetric space, let $\alpha$ be a geodesic with endpoints $\eta, \hat{\eta}$. Consider the generalized Iwasawa decomposition, see \cref{iwasawa}, $\textup{Isom}_0(X) = K A_\eta N_\eta$ with respect to $\eta$ and a point in $\alpha$. Let $\gamma$ be the geodesic containing $x$ and $\eta$ at $+ \infty$, then there exists $a \in A_\eta$ and $n \in N_\eta$ such that $ \gamma = a n \alpha$. Note that $a \alpha$ is parallel to $\alpha$ so $a \alpha \subset P\left(\{\eta, \hat{\eta} \}\right) = P(s) $. Moreover, for any $t \in \mathbb{R}$
    $$ d_X(\gamma(t),a \alpha(t)) = d_X(an \alpha(t),a \alpha(t)) = d_X(n \alpha(t), \alpha(t)) \xrightarrow[+\infty] \, 0. $$
    • If $X$ is a Euclidean building: we denote the geodesic ray $[x,\eta)$ by $\gamma$. By the angle rigidity axiom \cite[Sect.\ 4.1.2]{kleiner1997rigidity}, $\angle_{\gamma(t)}(\eta,\hat{\eta})$ takes only finitely many values, and since $\angle_{\gamma(t)}(\eta,\hat{\eta}) \xrightarrow[+\infty] \, \angle(\eta,\hat{\eta})=\pi$, there exists $T \geq 0$ such that if $t\geq T$ then $\angle_{\gamma(t)}(\eta,\hat{\eta}) = \pi$, i.e.\ $\gamma(t)$ is in a geodesic joining $\eta$ and $\hat{\eta}$. Therefore, for $t \geq T$, $\gamma(t) \in P(\{ \eta, \hat{\eta} \}) = P(s)$ by Lemma \ref{parallel of interior pt}.\qedhere
\end{proof}
\noindent
An immediate consequence of \cref{entering parallel} is the following.
\begin{cor}\label{uniqueness projection}
If $X$ be a symmetric space or a Euclidean building, $s$ is a singular sphere in $\bound X$, and $\eta$ an interior point of a top-dimensional cell of $s$, then for any $x \in X$ there exists a unique flat $H$ with boundary $s$ to which $[x,\eta)$ is strongly asymptotic.
\end{cor}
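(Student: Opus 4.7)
The plan is to exploit the product decomposition $P(s) = \RR^{\dim(s)+1} \times CS(s)$ recalled in \cref{background}. Under this identification, any flat $H \subset X$ with $\bound H = s$ corresponds to a unique point $p \in CS(s)$ via $H = \RR^{\dim(s)+1} \times \{p\}$, and $\eta \in s$ lies at infinity of the Euclidean factor. Existence and uniqueness of $H$ thus reduce to showing that the tail of $[x,\eta)$ has a well-defined, constant $CS(s)$-coordinate in the limit.

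For existence I would split according to whether $X$ is a building or a symmetric space. In the building case, \cref{entering parallel} provides $T \geq 0$ such that $[x,\eta)(t) \in P(s)$ for $t \geq T$. Any geodesic in the product $\RR^{\dim(s)+1} \times CS(s)$ that converges to a point at infinity of the first factor must have constant coordinate in the second factor; hence the ray's $CS(s)$-coordinate is some constant $p_0$ on $[T,\infty)$, and $H := \RR^{\dim(s)+1} \times \{p_0\}$ is the required flat. In the symmetric space case, the Iwasawa argument inside the proof of \cref{entering parallel} produces a line $a\alpha \subset P(s)$ to which $[x,\eta)$ is strongly asymptotic; this line again has constant $CS(s)$-coordinate $p_0$, and the same definition of $H$ works.

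For uniqueness, suppose $[x,\eta)$ were strongly asymptotic to two flats $H_i = \RR^{\dim(s)+1} \times \{p_i\}$ with $\bound H_i = s$, for $i=1,2$. Composing with the nearest-point projection $\pi_{P(s)}$, which is well-defined for large $t$ since $P(s)$ is closed convex and the ray is strongly asymptotic to $P(s)$, the $CS(s)$-component of $\pi_{P(s)}([x,\eta)(t))$ must converge to both $p_1$ and $p_2$, because the product metric dominates the metric on each factor. Hence $p_1 = p_2$, and so $H_1 = H_2$. The only mildly delicate point is confirming in the symmetric space case that the $CS(s)$-projection of the tail genuinely has a single limit, but this is automatic from strong asymptoticity to the line $a\alpha$ furnished by \cref{entering parallel}, whose $CS(s)$-coordinate is constant.
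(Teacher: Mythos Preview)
Your argument is correct and is exactly how one unpacks the paper's one-line claim that this is an immediate consequence of \cref{entering parallel}: land the tail of the ray in $P(s)$ (literally in the building case, asymptotically along the line $a\alpha$ in the symmetric-space case) and read off the constant $CS(s)$-coordinate from the product splitting. Your uniqueness step via $\pi_{P(s)}$ works, though one could also simply observe that two distinct flats with boundary $s$ are parallel at constant positive distance $d_{CS(s)}(p_1,p_2)$, so no ray can be strongly asymptotic to both.
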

\subsection{The projection map}\label{subsect proj map}
\cref{uniqueness projection} allows us to define a projection onto the cross section of a singular sphere via an interior point. 
\begin{defn}\label{proj}
    Let $s$ be a singular sphere in $\bound X$, and $\eta$ an interior point of a top-dimensional cell of $s$. We define the projection via $\eta$, $\pi : X \to CS(s)$, by assigning to $x$ the unique flat with boundary $s$ to which $[x,\eta)$ is strongly asymptotic.
\end{defn}
\begin{rem}
    Note that $\pi$ depends of the top-dimensional cell of $s$, but does not depend on the choice of its interior point.
\end{rem}
\noindent
We endow $CS(s)$, viewed as the set of all flats with boundary $s$ (hence all parallel), with the Hausdorff distance.
\begin{lem}
   The map $\pi$ is 1-Lipschitz.
\end{lem}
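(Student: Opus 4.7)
The plan is to bound the Hausdorff distance $d_H(\pi(x), \pi(y))$ by $d_X(x,y)$ using two ingredients: convexity of the distance function along asymptotic geodesic rays in a CAT(0) space, and strong asymptoticity of the defining rays of $\pi$ to their image flats.

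First, I would fix $x, y \in X$, set $H_x = \pi(x)$, $H_y = \pi(y)$, and observe the structural fact that $H_x$ and $H_y$ are parallel $(\dim s + 1)$-flats sharing boundary $s$. Both sit inside $P(s) \cong \RR^{\dim(s)+1} \times CS(s)$ as fibers $\RR^{\dim(s)+1} \times \{*\}$, so the function $p \mapsto d_X(p, H_y)$ is constant on $H_x$ and equal to the Hausdorff distance $d_H(H_x, H_y) = d_{CS(s)}(H_x, H_y)$. In particular, for any $p \in H_x$ and any $q \in H_y$,
$$d_H(H_x, H_y) \leq d_X(p, q).$$

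Next, consider the geodesic rays $\gamma_x = [x, \eta)$ and $\gamma_y = [y, \eta)$. Since both are asymptotic to $\eta$ and $X$ is CAT(0), the map $t \mapsto d_X(\gamma_x(t), \gamma_y(t))$ is convex on $[0,\infty)$, and bounded (asymptotic rays remain at bounded distance). A convex bounded function on $[0,\infty)$ is non-increasing, hence
$$d_X(\gamma_x(t), \gamma_y(t)) \leq d_X(x, y) \quad \text{for all } t \geq 0.$$
By \cref{entering parallel}, $\gamma_x$ is strongly asymptotic to $H_x$ and $\gamma_y$ to $H_y$, so I can choose $p(t) \in H_x$ and $q(t) \in H_y$ with $d_X(\gamma_x(t), p(t)) \to 0$ and $d_X(\gamma_y(t), q(t)) \to 0$ as $t \to \infty$.

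Finally, I would combine these via the triangle inequality:
$$d_X(p(t), q(t)) \leq d_X(p(t), \gamma_x(t)) + d_X(\gamma_x(t), \gamma_y(t)) + d_X(\gamma_y(t), q(t)) \leq d_X(x,y) + o(1).$$
Since $p(t) \in H_x$ and $q(t) \in H_y$, the structural observation above gives $d_H(H_x, H_y) \leq d_X(p(t), q(t))$, so passing to the limit $t \to \infty$ yields $d_H(\pi(x), \pi(y)) \leq d_X(x, y)$, which is the desired 1-Lipschitz estimate.

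There is no serious obstacle here: all ingredients (CAT(0) convexity of distance between asymptotic rays, strong asymptoticity from \cref{entering parallel}, and the product structure of $P(s)$) are either standard or already established in the excerpt. The only point that deserves explicit verification is the identification of $d_H$ with the constant pointwise distance between parallel flats, which follows immediately from the factorization $P(s) = \RR^{\dim(s)+1} \times CS(s)$.
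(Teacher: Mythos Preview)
Your proof is correct and follows essentially the same approach as the paper's: both combine the convexity bound $d_X(\gamma_x(t),\gamma_y(t)) \leq d_X(x,y)$ with strong asymptoticity of the rays to their image flats, then pass to the limit $t\to\infty$ via the triangle inequality. The only cosmetic difference is that the paper writes the triangle inequality directly with point-to-set distances $d_X(\pi(x),\gamma(t))$ rather than choosing explicit points $p(t)\in H_x$, $q(t)\in H_y$ as you do; your version is slightly more detailed in spelling out why the Hausdorff distance between parallel flats equals the constant pointwise distance, which the paper leaves implicit.
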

\begin{proof}
    Let $x,x' \in X$, and $\gamma=[x,\eta)$ and $\gamma'=[x',\eta)$. For any $t \geq 0$,
    $$d(\pi(x),\pi(x')) \leq d_X(\pi(x),\gamma(t)) + d_X(\gamma(t),\gamma'(t))+ d_X(\gamma'(t),\pi(x')). $$
    By convexity of the distance function, $d_X(\gamma(t),\gamma'(t)) \leq d_X(x,x')$, so when $t \to +\infty$ we get the result. \qedhere
\end{proof}
\noindent
\begin{rem}
    As mentioned in the introduction, this projection is a variation of the projection onto the space of strong asymptotic classes introduced by Leeb \cite{leebcharac}.
\end{rem}
\noindent
We end this section with some useful lemmas related to this projection map. Let us first recall that if $X$ is a CAT(0) space, $x_0 \in X$, and $\eta \in \partial X$, the Busemann function with respect to $x_0$ and $\eta$ is the map $b : X \to \mathbb{R}$ such that for any $x \in X$,
$$ b(x) = \displaystyle\lim_{t \to + \infty} d_X([x_0,\eta)(t),x)-t.$$
\begin{lem}\label{strong asymptot is convex}
    Let $X$ be a complete CAT(0) space, $x \in X$, $F$ a flat in $X$, and $\eta_1, \eta_2 \in \bound F$ such that $\angle(\eta_1,\eta_2)<\pi$.
    If $[x,\eta_1)$ is strongly asymptotic to $F$ and $\angle_x(\eta_1,\eta_2) = \angle(\eta_1,\eta_2)$, then for any interior point $\xi$ of $\overline{\eta_1 \eta_2}$, $[x,\xi)$ is strongly asymptotic to $F$.
\end{lem}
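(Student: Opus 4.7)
The plan is to apply the flat sector lemma for CAT(0) spaces: the hypothesis $\angle_x(\eta_1,\eta_2)=\angle(\eta_1,\eta_2)=:\theta<\pi$ forces the closed convex hull $\sigma:=\hull([x,\eta_1)\cup[x,\eta_2))$ to be isometric to the flat Euclidean sector of angle $\theta$ with vertex $x$. (For each pair $y_1\in[x,\eta_1)$, $y_2\in[x,\eta_2)$ the comparison angle $\tilde\angle_x(y_1,y_2)$ is at most $\angle(\eta_1,\eta_2)=\theta$ by monotonicity in $y_1,y_2$, and at least $\angle_x(y_1,y_2)=\theta$ by the CAT(0) inequality; equality then makes every triangle $xy_1y_2$ flat.) In particular, for any interior point $\xi$ of $\overline{\eta_1\eta_2}$, the ray $r_\xi:=[x,\xi)$ lies in $\sigma$ and, in Euclidean coordinates on $\sigma$, makes angle $\theta_1\in(0,\theta)$ with $r_1:=[x,\eta_1)$.

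Next I would interpolate between $r_1$, which is strongly asymptotic to $F$, and $r_\xi$ using geodesic rays directed to $\eta_2$. Since $\sigma$ is convex and $\eta_2\in\bound\sigma$, for every $s\ge 0$ the ray $[r_1(s),\eta_2)$ is contained in $\sigma$. An elementary trigonometric computation inside the Euclidean model of $\sigma$ shows that $[r_1(s),\eta_2)$ meets $r_\xi$ at the unique point $r_\xi(t(s))$, where
\[
u(s)=\frac{s\sin\theta_1}{\sin(\theta-\theta_1)}\quad\text{and}\quad t(s)=\frac{s\sin\theta}{\sin(\theta-\theta_1)},
\]
so in particular $t(s)\to\infty$ as $s\to\infty$.

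Finally, let $q_s:=\pi_F(r_1(s))$ be the closest-point projection onto $F$; by strong asymptoticity of $r_1$ to $F$, $\epsilon_s:=d_X(r_1(s),q_s)\to 0$. Because $F$ is a flat with $\eta_2\in\bound F$, the ray $[q_s,\eta_2)$ is entirely contained in $F$. The standard CAT(0) convexity of the distance function between two asymptotic rays sharing the endpoint $\eta_2$ then yields
\[
d_X\!\bigl([r_1(s),\eta_2)(u),\,[q_s,\eta_2)(u)\bigr)\le d_X(r_1(s),q_s)=\epsilon_s
\]
for every $u\ge 0$. Specializing to $u=u(s)$ gives $d_X(r_\xi(t(s)),F)\le\epsilon_s\to 0$, and combined with $t(s)\to\infty$ this proves that $r_\xi$ is strongly asymptotic to $F$. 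The only nontrivial ingredient is the flat sector lemma; everything else reduces to Euclidean trigonometry inside $\sigma$ together with convexity of the distance function.
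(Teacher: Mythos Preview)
Your proof is correct and follows essentially the same approach as the paper: both invoke the flat sector lemma (Bridson--Haefliger, Chap.\ II Cor.\ 9.9) to identify $[x,\overline{\eta_1\eta_2})$ with a Euclidean sector, and both finish with the convexity of the distance to $F$. The only difference is cosmetic: where you track the single ray $[r_1(s),\eta_2)$ and compute its intersection with $r_\xi$ by explicit trigonometry, the paper observes in one line that the entire sub-cone $[x_1,\overline{\eta_1\eta_2})$ lies in $N_\varepsilon(F)$ (since $d(\cdot,F)$ is convex and bounded along any ray to a point of $\bound F$, hence non-increasing), and that $r_\xi$ eventually enters this sub-cone because $\xi$ is interior. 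This saves the formulas for $u(s)$ and $t(s)$ but is otherwise the same argument.
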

\begin{proof}
$\angle_x(\eta_1,\eta_2) = d_T(\eta_1,\eta_2)<\pi$ implies that $[x,\overline{\eta_1 \eta_2})$ is a flat sector, see \cite[Chap.\ 2 Cor.\ 9.9]{bridson2013metric}. Let $\varepsilon>0$, there exists $x_1 \in [x,\eta_1)$ such that $d_X(x_1,F) \leq \varepsilon$. By convexity of the distance function to $F$, $[x_1,\overline{\eta_1 \eta_2}) \subset N_\varepsilon (F)$. Since $[x,\overline{\eta_1 \eta_2})$ is a flat sector, for any interior point $\xi$ of $\overline{\eta_1 \eta_2}$, $[x,\xi)$ enters in $[x_1,\overline{\eta_1 \eta_2})$, therefore $d_X([x,\xi),F) \leq \varepsilon$. \qedhere
\end{proof}
\begin{cor}\label{cor two flats strongly asymptot}
    Let $X$ be a complete CAT(0) space, $F$ and $F'$ two flats in $X$, $x \in F$, and $\eta \in \bound F \cap \bound F'$. If $[x,\eta)$ is strongly asymptotic to $F'$, then for any interior point $\xi$ of $\bound F \cap \bound F'$, $[x,\xi)$ is strongly asymptotic to $F'$.
\end{cor}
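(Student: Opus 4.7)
My plan is to deduce this corollary directly from the preceding Lemma \ref{strong asymptot is convex}, applied with the flat of that lemma taken to be $F'$ and the two boundary points taken to be $\eta$ together with a companion point $\eta_2 \in \bound F \cap \bound F'$ chosen on the far side of $\xi$ from $\eta$.

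First I would produce $\eta_2$. Since $\xi$ lies in the relative interior of the Tits-convex subset $\bound F \cap \bound F' \subset \bound X$, I can prolong the angular geodesic from $\eta$ through $\xi$ slightly past $\xi$ while remaining inside $\bound F \cap \bound F'$. Choosing the prolongation short enough yields $\eta_2 \in \bound F \cap \bound F'$ with $\xi$ interior to $\overline{\eta\eta_2}$ and with $\angle(\eta,\eta_2) < \pi$, which is the angle condition needed to invoke Lemma \ref{strong asymptot is convex}.

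Next I would verify the flat-angle equality $\angle_x(\eta,\eta_2) = \angle(\eta,\eta_2)$. This is immediate from the hypotheses of the corollary: because $x \in F$ and $\eta,\eta_2 \in \bound F$, the ideal triangle $\{x,\eta,\eta_2\}$ is realised inside the flat $F$, where the comparison angle at $x$ coincides with the angular distance on the boundary. Combined with the assumption that $[x,\eta)$ is strongly asymptotic to $F'$ and the fact that $\eta,\eta_2 \in \bound F'$, every hypothesis of Lemma \ref{strong asymptot is convex} (applied to the flat $F'$) is now in place, and its conclusion gives strong asymptoticity of $[x,\xi')$ to $F'$ for every interior point $\xi'$ of $\overline{\eta\eta_2}$. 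Our $\xi$ is such an interior point, and the corollary follows.

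The main obstacle I anticipate is the geometric input in Step 1: one must know that $\bound F \cap \bound F'$ is Tits-convex in $(\bound X,\angle)$ and that its interior points admit genuine Tits neighbourhoods inside the intersection, so that the prolongation producing $\eta_2$ really remains in $\bound F \cap \bound F'$. In the borderline subcase $\angle(\eta,\xi) = \pi$, a direct extension past $\xi$ does not leave $\xi$ interior to the short geodesic $\overline{\eta\eta_2}$; one would then first apply the construction to an auxiliary point $\eta' \in \bound F \cap \bound F'$ with $\angle(\eta,\eta') < \pi$ and $\angle(\eta',\xi) < \pi$, and repeat the argument from $\eta'$. This iteration is routine once the Tits-convexity input is granted.
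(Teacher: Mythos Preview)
Your proposal is correct and matches the paper's proof: both invoke the convexity of $\bound F \cap \bound F'$ to place $\xi$ in the interior of a segment $\overline{\eta\eta'}$ with $\angle(\eta,\eta')<\pi$, observe that $x\in F$ forces $\angle_x(\eta,\eta')=\angle(\eta,\eta')$, and then apply Lemma~\ref{strong asymptot is convex}. The only difference is the borderline case $\angle(\eta,\xi)=\pi$, which the paper dispatches in one line via the flat strip theorem (the bi-infinite geodesic through $x$ with endpoints $\eta,\xi$ lies in $F$ and is strongly asymptotic to $F'$, hence is contained in $F'$) rather than by your two-step iteration; both arguments are valid.
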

\begin{proof}
    If $F$ or $F'$ is one-dimensional then the interior of $\bound F \cap \bound F'$ is empty. If not, let $\xi$ be an interior point. If $\angle(\eta,\xi)=\pi$, there exists a geodesic $\gamma \subset F$ containing $x$ and joining $\xi$ and $\eta$. $[x,\eta)$ is strongly asymptotic to $F'$ implies, by the flat stip theorem, that $\gamma \subset F'$. If $\angle(\eta,\xi)<\pi$ then, by convexity of $\bound F \cap \bound F'$, $\xi$ lies in a segment $\overline{\eta \eta'}$ for some $\eta' \in \bound F \cap \bound F'$ such that $\angle(\eta_1,\eta_2) < \pi$. The conclusion follows from \cref{strong asymptot is convex}. \qedhere
\end{proof}
\begin{lem}\label{asymptot to some geod}
    Let $X$ be a complete CAT(0) space, $F$ a flat in $X$, $\eta \in \bound F$, and $x \in X$ such that $[x,\eta)$ is strongly asymptotic to $F$. Then $[x,\eta)$ is strongly asymptotic to some geodesic in $F$.
\end{lem}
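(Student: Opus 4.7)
The plan is to construct the desired ray as a suitable limit within the one-parameter family of rays in $F$ pointing to $\eta$. Fix the Busemann function $b_\eta$ of $\eta$, normalized arbitrarily, and reparametrize $\gamma := [x,\eta)$ so that $b_\eta(\gamma(t)) = -t$. For each $q \in F$, let $\beta_q$ denote the geodesic ray in $F$ from $q$ to $\eta$, reparametrized so that $b_\eta(\beta_q(t)) = -t$ as well. Identifying $F$ with a Euclidean space in which $\eta$ corresponds to a unit direction $v$, these rays are given explicitly by $\beta_q(t) = q + (t + b_\eta(q))v$. In particular $\beta_q = \beta_{q'}$ whenever $q - q' \in \RR v$, so $\beta_q$ depends only on $q^\perp$, the component of $q$ perpendicular to $v$; moreover the separation $d(\beta_q(t), \beta_{q'}(t)) = |q^\perp - q'^\perp|$ is constant in $t$.

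In a complete CAT(0) space, the distance $t \mapsto d(\gamma(t), \beta_q(t))$ between two unit-speed rays asymptotic to the same boundary point is convex and bounded, hence non-increasing, so $L(q) := \lim_{t \to \infty} d(\gamma(t), \beta_q(t))$ exists. The core step is to prove that $L$ attains the value $0$. To see $\inf L = 0$: given $\varepsilon > 0$, pick $T$ large enough that $d(\gamma(T), F) \leq \varepsilon$ and set $q := \pi_F(\gamma(T))$. Since $b_\eta$ is $1$-Lipschitz and $b_\eta(\gamma(T)) = -T$, one has $|b_\eta(q) + T| \leq \varepsilon$, and the explicit formula for $\beta_q$ gives $d(q, \beta_q(T)) = |T + b_\eta(q)| \leq \varepsilon$; combining with $d(\gamma(T), q) \leq \varepsilon$ yields $d(\gamma(T), \beta_q(T)) \leq 2\varepsilon$ and thus $L(q) \leq 2\varepsilon$.

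For attainment, the triangle inequality $d(\beta_{q_n}(t), \beta_{q_m}(t)) \leq d(\gamma(t), \beta_{q_n}(t)) + d(\gamma(t), \beta_{q_m}(t))$, together with the constancy of the left-hand side in $t$, yields in the limit $|q_n^\perp - q_m^\perp| \leq L(q_n) + L(q_m)$. Hence any minimizing sequence $(q_n^\perp)$ is Cauchy in the finite-dimensional Euclidean horosphere of $F$ and converges to some $q_\infty^\perp$; the same inequality shows $L$ is $1$-Lipschitz in $q^\perp$, so $L(q_\infty) = 0$, and $\beta_{q_\infty}$ is the desired geodesic. The hardest part is attainment of the infimum: the Euclidean structure of $F$ is essential here, as it is what converts minimization of $L$ into a Cauchy condition in a finite-dimensional space.
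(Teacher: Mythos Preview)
Your proof is correct and follows essentially the same approach as the paper: both arguments fix a Busemann level set (horosphere) $H$ in $F$, produce a sequence of points in $H$ whose associated rays in $F$ get arbitrarily close to $[x,\eta)$, show via the triangle inequality that this sequence is Cauchy, and take the limit. Your organization via the limit function $L(q)$ and its $1$-Lipschitz dependence on $q^\perp$ is a clean packaging of the same idea; the paper instead picks points $f_n\in F$ close to the ray, projects them along $\eta$ to $h_n\in H$, and shows $(h_n)$ is Cauchy directly.
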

\begin{proof}
Let $b$ be a Busemann function with respect to $\eta$, and $H$ the intersection of $F$ with a level set of $b$, which is a hyperplane in $F$. Let $(f_n)_{n \in \mathbb{N}}$ be a sequence of points in $F$ such that $d_X(f_n,[x,\eta))\leq \frac{1}{n}$, and $h_n$ be the projection of $f_n$ on $H$ (i.e.\ the intersection of $H$ with the geodesic containing $f_n$ and $\eta$). By convexity of the distance function,
$$ d_X([x,\eta),[h_n,\eta)) = d_X([x,\eta),[f_n,\eta)) \leq d_X([x,\eta),f_n) \leq  \frac{1}{n}.$$
Let $n,m \in \mathbb{N}$, and $z\in [x,\eta)$ such that $d_X(z,[f_n,\eta)) \leq \frac{2}{n}$ and $d_X(z,[f_m,\eta)) \leq \frac{2}{m}$. So for any $a \in [f_n,\eta)$ and $b \in [f_m,\eta)$,
$$d_X([f_n,\eta)),[f_m,\eta))) \leq d_X(a,z)+d_X(z,b)$$
Therefore,
$$d_X([f_n,\eta)),[f_m,\eta))) \leq d_X(z,[f_n,\eta))+d_X(z,[f_m,\eta))\leq \frac{2}{n}+ \frac{2}{m}$$
Since $b(h_n) = b(h_m)$,
\begin{equation*}
    \begin{split}
        d_X(h_n,h_m) &= d_X([h_n,\eta),[h_m,\eta)) 
        \\& = d_X([f_n,\eta),[f_m,\eta))
        \\& \leq \frac{2}{n}+ \frac{2}{m}.
    \end{split}
\end{equation*}
So $(h_n)_n$ is a Cauchy sequence in $H$, and there exists $h \in H$ such that $h_n \xrightarrow[+\infty] \, h $, and a same argument as before shows that $d_X([h,\eta),[x,\eta)) =0$. So $[x,\eta)$ is strongly asymptotic to the extension, in $F$, of the geodesic ray $[h,\eta)$.\qedhere
\end{proof}
\noindent
For the rest of this section, $X$ is either a Euclidean building or a symmetric space.
\begin{prop}\label{all flat strongly asymptotic}
    Let $F,F'$ be singular flats such that $\dim \bound F = \dim \bound F' = \dim (\bound F \cap \bound F')$, and let $\eta$ be an interior point of a top-dimensional cell $c$ of $\bound F \cap \bound F'$. 
    \begin{itemize}
        \item[($1$)] If there exists $x \in F$ such that $[x,\eta)$ is strongly asymptotic to $F'$, then for any $y \in F$, $[y,\eta)$ is strongly asymptotic to $F'$.
        \item[($2$)] Let $s$ be a singular sphere in $\bound X$ containing $c$ and $\dim s = \dim \bound F = \dim \bound F'$, and let $\pi : X \to CS(s)$ be the projection via $\eta$. Then for any $x,y \in F$, $\pi(x) = \pi(y)$. Moreover, if there exist $ x \in F$ and $x' \in F'$ such that $\pi(x) = \pi(x')$, then $[x,\eta)$ is strongly asymptotic to $F'$ (and $[x',\eta)$ is strongly asymptotic to $F$).
    \end{itemize}
\end{prop}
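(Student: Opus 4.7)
My plan is to prove part $(1)$ first by a ``cone-overlap'' argument inside the flat $F$, and then deduce both statements of part $(2)$ by short applications of part $(1)$ combined with \cref{asymptot to some geod} and the triangle inequality. For part $(1)$, given $y \in F$, I would transfer the strong asymptoticity of $[x,\eta)$ to $F'$ over to the ray $[y,\eta)$ by routing it through an intermediate direction. Identifying $F$ with $\RR^k$ (where $k = \dim F$) and letting $v$ be the unit vector in $F$ pointing to $\eta$, the hypothesis that $\eta$ is interior to the top-dimensional cell $c$ of $\bound F$ forces $v$ into the relative interior of the cone in $F$ over $c$. For $T$ large, $(y - x) + Tv$ therefore also lies in this interior, so the point $w := [y,\eta)(T) = y + Tv$ lies on a ray $[x, \xi_1)$ for some $\xi_1 \in \mathrm{int}(c) \subset \mathrm{int}(\bound F \cap \bound F')$. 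A first application of \cref{cor two flats strongly asymptot} at $x \in F$ upgrades the strong asymptoticity of $[x,\eta)$ to $F'$ into that of $[x,\xi_1)$ to $F'$, and hence of its tail $[w,\xi_1)$. A second application, this time at $w \in F$, transports this back to the direction $\eta$, yielding that $[w,\eta)$ is strongly asymptotic to $F'$. Since $[w,\eta)$ is a tail of $[y,\eta)$, part $(1)$ follows.

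For part $(2)$, the first claim $\pi(x) = \pi(y)$ for $x,y \in F$ is immediate from part $(1)$: letting $H_x := \pi(x)$, the singular sphere $s = \bound H_x$ and $\bound F$ are both $(k-1)$-dimensional and share the top-dimensional cell $c$, so $\dim(\bound F \cap \bound H_x) = k-1$ with $c$ top-dimensional in this intersection. Applying part $(1)$ to the pair $(F, H_x)$ gives that $[y,\eta)$ is strongly asymptotic to $H_x$ for every $y \in F$, and the uniqueness in \cref{uniqueness projection} yields $\pi(y) = H_x$. For the second claim, assume $\pi(x) = \pi(x') =: H$; by \cref{asymptot to some geod} there exist $h_x, h_{x'} \in H$ with $[h_x,\eta) \subset H$ (resp.\ $[h_{x'},\eta) \subset H$) strongly asymptotic to $[x,\eta)$ (resp.\ $[x',\eta)$), and in particular $[h_{x'},\eta)$ is strongly asymptotic to $F'$ since $[x',\eta) \subset F'$. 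Applying part $(1)$ to the pair $(H, F')$, I obtain that $[y,\eta)$ is strongly asymptotic to $F'$ for every $y \in H$. Taking $y = h_x$ and using the triangle inequality for point-to-set distance,
\[
d_X([x,\eta)(t), F') \leq d_X([x,\eta)(t), [h_x,\eta)(t)) + d_X([h_x,\eta)(t), F') \xrightarrow[t \to +\infty]{} 0,
\]
I conclude that $[x,\eta)$ is strongly asymptotic to $F'$; the symmetric statement for $[x',\eta)$ follows by exchanging the roles of $F$ and $F'$.

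The main obstacle is the cone-overlap step of part $(1)$: one must locate a direction $\xi_1$ reachable by geodesic rays from both $x$ and $w = [y,\eta)(T)$ for some large $T$, so that \cref{cor two flats strongly asymptot} can be chained across two different basepoints. The dimension hypothesis $\dim(\bound F \cap \bound F') = \dim \bound F$ is used precisely here, as it guarantees that an interior point of $c$ is an interior point of $\bound F \cap \bound F'$, which is exactly what \cref{cor two flats strongly asymptot} requires as input. Once this observation is in place, the remainder of the proof is a chase through the definitions.
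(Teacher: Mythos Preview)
Your proposal is correct and follows essentially the same strategy as the paper. For part~$(1)$, the paper gives a direct $\varepsilon$-argument---pick $a$ far along $[x,\eta)$ with $d_X(a,F')\le\varepsilon$, observe that the cone $[a,c^{\mathrm o})$ lies in $N_\varepsilon(F')$ by convexity, and note that $[y,\eta)$ eventually enters this cone since $c$ is top-dimensional in $\bound F$---whereas you package the same cone-overlap idea as two applications of \cref{cor two flats strongly asymptot}; the geometry is identical. For part~$(2)$, your argument and the paper's match almost verbatim (route through $H=\pi(x)=\pi(x')$ via \cref{asymptot to some geod}, apply part~$(1)$ to the pair $(H,F')$, and conclude by the triangle inequality), and you additionally spell out the first claim $\pi(x)=\pi(y)$, which the paper leaves implicit.
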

\begin{proof}
    $(1)$ Let $\varepsilon>0$. There exists $a \in [x,\eta)$ such that $d_X(a,F') \leq \varepsilon$. Let $z' \in F'$ such that $d_X(a,z') \leq \varepsilon$. Let us denote by $c^{\mathrm{o}}$ the interior of $c$.
    For any $ \xi \in c^{\mathrm{o}}$, and any $t \geq 0$, by convexity of the distance function
    $$  d_X([a,\eta)(t),[z',\eta)(t)) \leq d_X(a,z') \leq \varepsilon.$$
    So $[a,c^{\mathrm{o}}) \subset N_{\varepsilon} (F')$. Since $c$ is a top-dimensional cell in $\bound F$ and $\eta \in c^{\mathrm{o}} $, for any $y \in F$, $[y,\eta)$ enters eventually in $[a,c^{\mathrm{o}})$. So
    $$ \lim_{t \to + \infty} d_X([y,\eta)(t),F') \leq \varepsilon.$$
    This holds for any $\varepsilon>0$, so $[y,\eta)$ is strongly asymptotic to $F'$. 

    \medskip \noindent
    $(2)$ Let $H$ be the singular flat with boundary $s$ to which $[x,\eta)$ and $[x',\eta)$ are strongly asymptotic. By \cref{asymptot to some geod}, there exist $z,z' \in H$ such that $[x,\eta)$ is strongly asymptotic to $[z,\eta)$ and $[x',\eta)$ is strongly asymptotic to $[z',\eta)$. In particular, $[z',\eta)$ is strongly asymptotic to $[x',\eta)$ and thus to $F'$. By $(1)$, $[z,\eta)$ is also strongly asymptotic to $F'$ since $z$ and $z'$ are both in $H$. $[x,\eta)$ is strongly asymptotic to $[z,\eta)$, so $[x,\eta)$ is strongly asymptotic to $F'$. The same argument show that $[x',\eta)$ is strongly asymptotic to $F$. \qedhere
\end{proof}
\begin{rem}
    The second point of \cref{all flat strongly asymptotic} implies that under these conditions, if there exist $x \in F$ and $x' \in F'$ such that $\pi(x) = \pi(x')$, then $\pi(F) = \pi(F')$.
\end{rem}
\begin{cor}\label{cor of prop}
    Let $F,F'$ be singular flats such that $\dim \bound F = \dim \bound F' = \dim (\bound F \cap \bound F')$, and let $\eta$ be an interior point of a top-dimensional cell $c$ of $\bound F \cap \bound F'$. 
    \begin{itemize}
        \item[(1)] For any $x,y \in F$, 
        $$d_X([x,\eta),F') = d_X([y,\eta),F').$$
        \item[(2)] Let $s$ be a singular sphere in $\bound X$ containing $c$ and $\dim s = \dim \bound F = \dim \bound F'$, and let $\pi : X \to CS(s)$ be the projection via $\eta$.  For any $ x \in F$ and $x' \in F'$, 
        $$ d(\pi(F),\pi(F')) = d(\pi(x),\pi(x')) = d_X([x,\eta),F') = d_X([x',\eta),F).$$
    \end{itemize}
\end{cor}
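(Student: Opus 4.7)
For part (1), I plan to run a quantitative variant of the proof of \cref{all flat strongly asymptotic}(1). Setting $D := d_X([x,\eta), F')$, for any $\varepsilon > 0$ I pick $a \in [x,\eta)$ with $d_X(a, F') \leq D + \varepsilon$ and $z' \in F'$ such that $d_X(a, z') \leq D + \varepsilon$. For any $\xi \in c^{\mathrm{o}}$, the rays $[a, \xi) \subset F$ and $[z', \xi) \subset F'$ make sense since $\xi \in \bound F \cap \bound F'$, and convexity of the CAT(0) distance gives $d_X([a,\xi)(t), [z',\xi)(t)) \leq d_X(a, z') \leq D + \varepsilon$, so the sector $[a, c^{\mathrm{o}})$ lies in the $(D + \varepsilon)$-neighbourhood of $F'$. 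Since $c$ is a top-dimensional cell of $\bound F$ with $\eta$ in its interior, any ray $[y, \eta)$ for $y \in F$ eventually enters this sector, giving $d_X([y,\eta), F') \leq D + \varepsilon$. Letting $\varepsilon \to 0$ and then swapping $x$ and $y$ by symmetry yields the equality.

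For part (2), the first equality $d(\pi(F), \pi(F')) = d(\pi(x), \pi(x'))$ is immediate from \cref{all flat strongly asymptotic}(2), which asserts that $\pi$ is constant on $F$ and on $F'$. The inequality $d(\pi(x), \pi(x')) \leq d_X([x,\eta), F')$ then follows from the $1$-Lipschitz property of $\pi$: for every $t \geq 0$ and $y' \in F'$,
$$d(\pi(x), \pi(x')) = d(\pi([x,\eta)(t)), \pi(y')) \leq d_X([x,\eta)(t), y'),$$
and taking infima delivers the bound.

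The main obstacle will be the reverse inequality $d_X([x,\eta), F') \leq d(\pi(x), \pi(x'))$. Write $H := \pi(x)$ and $H' := \pi(x')$, and use the product decomposition $P(s) = \mathbb{R}^{\dim s + 1} \times CS(s)$ from \cref{background} to view $H, H'$ as parallel slices at Hausdorff distance $d(H, H') = d(\pi(x), \pi(x'))$. Given $\varepsilon > 0$, I use \cref{asymptot to some geod} to choose a geodesic $\alpha \subset H$ going to $\eta$ with $d_X([x,\eta)(t), \alpha(t)) \to 0$, and let $\alpha' \subset H'$ be its parallel translate, so that $d_X(\alpha(t), \alpha'(t)) = d(H, H')$ for every $t$. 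It then remains to bring points of $F'$ close to $\alpha'(t)$. For this I apply the construction from the proof of \cref{all flat strongly asymptotic}(1) to the pair $(F', H')$, which satisfies the analogous hypotheses: since $[x', \eta) \subset F'$ is strongly asymptotic to $H'$, there exist $a'' \in F'$ and $b'' \in H'$ with $d_X(a'', b'') \leq \varepsilon$, and convexity then shows that the sector $[b'', c^{\mathrm{o}}) \subset H'$ lies in the $\varepsilon$-neighbourhood of $F'$. Because $\alpha'$ heads to $\eta \in c^{\mathrm{o}}$, $\alpha'(t)$ eventually enters this sector, giving $d_X(\alpha'(t), F') \leq \varepsilon$. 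The triangle inequality then yields $d_X([x,\eta)(t), F') \leq \varepsilon + d(H, H') + \varepsilon$ for $t$ large, and since $t \mapsto d_X([x,\eta)(t), F')$ is non-increasing (because $F'$ is stable under the geodesic flow towards $\eta$), taking $\varepsilon \to 0$ closes the upper bound. Finally, $d_X([x,\eta), F') = d_X([x',\eta), F)$ is obtained by running the same argument with $F$ and $F'$ interchanged.
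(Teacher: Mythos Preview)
Your proof is correct, but your route differs from the paper's in a couple of places worth noting.

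For part~(1), the paper does not rerun the sector argument quantitatively as you do. Instead it applies \cref{uniqueness projection} with $s'=\bound F'$ to obtain a flat $F''$ parallel to $F'$ to which $[x,\eta)$ is strongly asymptotic; then \cref{all flat strongly asymptotic}(1) forces $[y,\eta)$ to be strongly asymptotic to the same $F''$, and both distances equal the Hausdorff distance $d_X(F',F'')$. This is shorter and more conceptual, and it simultaneously identifies the common value; your direct approach, on the other hand, does not use the cross-section structure at all and is a self-contained CAT(0) argument.

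For part~(2), the setup (constancy of $\pi$ on $F$ and $F'$, the $1$-Lipschitz lower bound, the parallel geodesics $\alpha\subset H$ and $\alpha'\subset H'$) matches the paper. The difference is in the upper bound: the paper uses the equality $\dim F'=\dim H'$ to locate a specific $x''\in F'$ with $[x'',\eta)$ strongly asymptotic to the parallel geodesic $\gamma'\subset H'$, and then estimates $d_X([x,\eta)(t),[x'',\eta)(t))$ directly. Your argument replaces this by the sector estimate showing $\alpha'(t)\in N_\varepsilon(F')$ eventually, which is slightly more elementary and sidesteps the choice of $x''$. Both arrive at the same bound; your monotonicity remark on $t\mapsto d_X([x,\eta)(t),F')$ is correct but not strictly needed, since the infimum over $t$ already delivers the inequality.
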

\begin{proof}
    $(1)$ If we denote by $s' = \bound F'$, by \cref{uniqueness projection} there exists a flat $F''$ with boundary $s'$, i.e.\ parallel to $F'$, to which $[x,\eta)$ is strongly asymptotic. By \cref{all flat strongly asymptotic}, $[y,\eta)$ is also strongly asymptotic to $F''$. Therefore,
    $$d_X([x,\eta),F') = d_X(F',F'') = d_X([y,\eta),F').$$
    $(2)$ Let $x\in F$ and $x' \in F'$. By \cref{all flat strongly asymptotic}, $\pi$ is constant on $F$ and on $F'$, so $d(\pi(F),\pi(F')) = d(\pi(x),\pi(x'))$, that we will denote by $D$. This implies that $d_X(F,F') \geq D$ ($\pi$ is $1$-Lipschitz) and in particular $d_X([x,\eta),F') \geq D$. Let $H$ (resp.\ $H'$) be the flat with boundary $s$ to which $[x,\eta)$ (resp.\ $[x',\eta)$) is strongly asymptotic. By \cref{asymptot to some geod}, there exists a geodesic $\gamma$ in $H$ to which $[x,\eta)$ is strongly asymptotic. $\gamma$ can be parameterized such that $d_X([x,\eta)(t),\gamma(t))\xrightarrow[+\infty] \, 0$. $H$ and $H'$ are parallel, so there exists a geodesic $\gamma'$ in $H'$, parallel to $\gamma$ such that for any $t$, $d_X(\gamma(t),\gamma'(t)) = d_X(H,H') = D $. Moreover, $\pi(F') = H'$ and they have the same dimension, so there exists $x'' \in F'$ such that $[x'',\eta)$ is strongly asymptotic to $\gamma'$ (by \cref{asymptot to some geod}). So for any $t\geq 0$,
    $$d_X([x,\eta)(t),[x',\eta)(t)) \leq d_X([x,\eta)(t),\gamma(t)) + d_X(\gamma(t),\gamma'(t)) + d_X(\gamma(t),[x',\eta)(t)).$$
    Therefore $d_X([x,\eta),[x',\eta)) \leq D$, and in particular $d_X([x,\eta),F') \leq D$.
    \qedhere
\end{proof}
\subsection{Generalized Iwasawa decomposition}\label{section Iwasawa}
Let us recall briefly the generalized Iwasawa decomposition for symmetric spaces, and we refer to \cite{eberlein1996geometry} for more details. Let $X$ be a symmetric space of non-compact type, $G = \textup{Isom}_0(X)$, $\mathfrak{g}$ its Lie algebra, $x_0 \in X$ a basepoint, $K = \textup{Stab}_G(x_0)$, $\mathfrak{g} = \mathfrak{k} \oplus \mathfrak{p}$ the Cartan decomposition with respect to $x_0$. Note that for any geodesic $\gamma$ through $x_0$, there exists $Y \in \mathfrak{p}$ such that for any $t$, $\gamma(t) = \exp(tY).x_0$. Let us fix $\gamma$ and $Y$, and consider $\mathfrak{a}$ a Cartan subspace of $\mathfrak{p}$ that contains $Y$, $\Phi$ the restricted root system of $\mathfrak{g}$ relative to $\mathfrak{a}$, and $\mathfrak{g} = \mathfrak{g}_0 \oplus \bigoplus_{\alpha \in \Phi} \mathfrak{g}_\alpha $ the restricted root space decomposition. 

\medskip \noindent
Let $\eta$ be the point at $+ \infty$ of $\gamma$, and let us denote $\mathfrak{a}_Y = \mathfrak{z}(Y)\cap \mathfrak{p}$, where $\mathfrak{z}(Y)$ is the centralizer of $Y$ in $\mathfrak{g}$, $\mathfrak{n}_Y = \bigoplus_{\alpha \in \Phi, \alpha(Y)>0} \mathfrak{g}_{\alpha}$, $A_\eta = \exp(\mathfrak{a}_Y)$, and $N_\eta = \exp(\mathfrak{n}_Y)$.
\begin{thm}[Generalized Iwasawa decomposition]\label{iwasawa}
    $$G = K A_\eta N_\eta.$$
    Moreover, we have
    \begin{itemize}
        \item[$(i)$] $A_\eta = A_\eta^{-1}$, and it normalizes $N_\eta$. 
        \item[$(ii)$] $A_\eta N_\eta$ acts simply transitively on $X$.
        \item[$(iii)$] For any $a \in A_\eta$, $a \gamma$ and $\gamma$ are parallel, i.e. $d_X(a.\gamma(t),\gamma(t))$ is constant.
        \\For any $n \in N_\eta$, $d_X(n.\gamma(t),\gamma(t)) \xrightarrow[+\infty] \, 0$.
    \end{itemize}
\end{thm}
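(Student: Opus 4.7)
The plan is to reduce the generalized Iwasawa decomposition to the standard one $G=KAN$ by routing through the parabolic subgroup $P_\eta=\textup{Stab}_G(\eta)$, and then verify the three auxiliary properties.

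First, at the Lie algebra level, I fix a positive system $\Phi^+$ in which $\alpha(Y)\ge 0$ for every $\alpha\in\Phi^+$, and split $\mathfrak{n}:=\bigoplus_{\alpha>0}\mathfrak{g}_\alpha=\mathfrak{n}_Y\oplus\mathfrak{n}'$ where $\mathfrak{n}'=\bigoplus_{\alpha>0,\,\alpha(Y)=0}\mathfrak{g}_\alpha$. For any $X\in\mathfrak{g}_\alpha$ with $\alpha(Y)=0$, the Cartan splitting $X=\tfrac12(X+\theta X)+\tfrac12(X-\theta X)$ lands in $\mathfrak{k}+(\mathfrak{z}(Y)\cap\mathfrak{p})=\mathfrak{k}+\mathfrak{a}_Y$, because $\mathfrak{z}(Y)$ is $\theta$-stable. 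Combined with the standard Iwasawa $\mathfrak{g}=\mathfrak{k}\oplus\mathfrak{a}\oplus\mathfrak{n}$, this yields the vector space direct sum $\mathfrak{g}=\mathfrak{k}\oplus\mathfrak{a}_Y\oplus\mathfrak{n}_Y$, with directness confirmed by a dimension count together with the injectivity of $\mathfrak{a}_Y\oplus\mathfrak{n}_Y\hookrightarrow\mathfrak{g}/\mathfrak{k}\cong T_{x_0}X$.

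Second, at the group level, let $M_\eta=Z_G(Y)^{\circ}$ be the reductive subgroup with Lie algebra $\mathfrak{z}(Y)$ and Cartan decomposition $\mathfrak{z}(Y)=(\mathfrak{z}(Y)\cap\mathfrak{k})\oplus\mathfrak{a}_Y$. The polar decomposition of the symmetric subspace $M_\eta/(M_\eta\cap K)$ yields the set-theoretic identity $M_\eta=(M_\eta\cap K)\cdot A_\eta$ with uniqueness. The Langlands-type decomposition of the parabolic, $P_\eta=M_\eta\ltimes N_\eta$, then reads $P_\eta=(M_\eta\cap K)\,A_\eta\,N_\eta$; and since $G=KP_\eta$ (a consequence of standard Iwasawa applied to the minimal parabolic contained in $P_\eta$), we conclude $G=KA_\eta N_\eta$. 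Uniqueness of this presentation---equivalently, simple transitivity of $A_\eta N_\eta$ on $X$---follows by first absorbing the ambiguity in the $K$-factor into $M_\eta\cap K$ using $K\cap P_\eta=M_\eta\cap K$, and then invoking the polar uniqueness inside $M_\eta$.

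Third, I verify (i)--(iii). For (i), $A_\eta=\exp(\mathfrak{a}_Y)$ is closed under inversion since $\mathfrak{a}_Y\subset\mathfrak{p}$ is a vector subspace and $\exp(-X)=\exp(X)^{-1}$; and $[\mathfrak{a}_Y,\mathfrak{n}_Y]\subset\mathfrak{n}_Y$ because every $X\in\mathfrak{a}_Y\subset\mathfrak{z}(Y)$ commutes with $Y$ and preserves the $\alpha(Y)$-grading, so $\textup{Ad}(A_\eta)$ stabilises $\mathfrak{n}_Y$. Item (ii) is the content of step two. For (iii), if $a=\exp(X)\in A_\eta$ then $[X,Y]=0$ gives $a\gamma(t)=\exp(tY)\exp(X)x_0$, whence $d_X(a\gamma(t),\gamma(t))=d_X(\exp(X)x_0,x_0)$ is constant in $t$. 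For $n=\exp(Z)\in N_\eta$, the conjugation identity $n\exp(tY)=\exp(tY)\exp\bigl(e^{-t\,\textup{ad}(Y)}Z\bigr)$ combined with the fact that $e^{-t\,\textup{ad}(Y)}$ scales each $\mathfrak{g}_\alpha\subset\mathfrak{n}_Y$ by $e^{-t\alpha(Y)}\to 0$ yields $d_X(n\gamma(t),\gamma(t))=d_X(\exp(e^{-t\,\textup{ad}(Y)}Z)x_0,x_0)\to 0$.

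The main obstacle is the uniqueness in step two: when $Y$ is singular, $\mathfrak{a}_Y$ is \emph{not} a Lie subalgebra, so $A_\eta$ is merely a subset of $G$ and the decomposition $g=kan$ cannot be read off directly from a product of subgroups. The Langlands route circumvents this, but requires the auxiliary facts $K\cap P_\eta=M_\eta\cap K$ and $G=KP_\eta$ to be established on independent grounds---via the action of $K$ on $\bound X$ and the standard Iwasawa for the minimal parabolic contained in $P_\eta$---rather than by circular appeal to the theorem being proved.
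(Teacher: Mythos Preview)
The paper does not prove this theorem at all: immediately after the statement it simply writes ``We refer to \cite[Chap.~2.19]{eberlein1996geometry}'' and moves on. So there is no in-paper argument to compare against; your proposal is a self-contained proof where the authors chose to cite a textbook.

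Your argument is essentially the standard one found in Eberlein (and Helgason): reduce to the parabolic $P_\eta$, use its Langlands decomposition $P_\eta=M_\eta N_\eta$, and then the Cartan/polar decomposition of the reductive Levi factor $M_\eta$ to split off $(M_\eta\cap K)\cdot A_\eta$. The verifications of (i)--(iii) are correct; in particular your observation that $\mathrm{ad}(X)$ commutes with $\mathrm{ad}(Y)$ for $X\in\mathfrak{a}_Y$, hence preserves the $\mathrm{ad}(Y)$-eigenspace decomposition, is exactly the right reason $A_\eta$ normalises $N_\eta$ even though $A_\eta$ is not a group.

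One small point to tighten: you set $M_\eta=Z_G(Y)^{\circ}$, but the identity $K\cap P_\eta=M_\eta\cap K$ you invoke for uniqueness really gives $K\cap P_\eta\subset Z_G(Y)$ (from $\mathrm{Ad}(k)Y=Y$ for $k\in K$ fixing $x_0$ and $\eta$), and $Z_G(Y)$ need not be connected. This is harmless---either redefine $M_\eta:=Z_G(Y)$, or note that the global Cartan decomposition $M_\eta=(M_\eta\cap K)\cdot\exp(\mathfrak{a}_Y)$ holds for the possibly disconnected $Z_G(Y)$ as well, since $\theta$ still restricts to a Cartan involution there---but it is worth saying explicitly so the uniqueness step is airtight.
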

\noindent
We refer to \cite[Chap.\ 2.19]{eberlein1996geometry}. Note that $A_\eta$ is not necessarily a subgroup: $\mathfrak{a}_Y$ is not necessarily a Lie subalgebra, unless $\eta$, i.e.\ $Y$, is regular. In this case $\mathfrak{a}_Y = \mathfrak{a}$, and we recover the usual Iwasawa decomposition. 
\section{Maximally distributed vertices in a spherical Coxeter complex}\label{section max distrib vertices}
Let $S$ be a spherical Coxeter complex, and $A \subset S$ a subset. If $A$ has diameter $<\pi$, we denote by $\hull(A)$ its convex hull in $S$. 
A hemisphere $\sigma$ in $S$ is called \textit{singular} if its boundary sphere $\partial \sigma$ is a wall.
\\Let $s$ be a singular sphere, and $\{\xi_i\}_{i}$ vertices in $S$. We say that the vertices $\{\xi_i\}_{i}$ span $s$ if $s$ is the smallest sphere (with respect to inclusion) that contains them.
\begin{prop}\label{choice}
Let $S$ be a spherical Coxeter complex of dimension $(n-1)$.
    \begin{itemize}
        \item[(1)] There exist vertices $\xi_1,\dots, \xi_n$ in $S$:
        \begin{itemize}
            \item[$(i)$] which are not pairwise opposite, nor all contained in a wall;
            \item[$(ii)$] for any $i = 1 \dots n$, $\{\xi_j\}_{j \ne i}$ span a wall;
            \item[$(iii)$] if $\sigma$ is a singular hemisphere in $S$ containing them, then $(n-1)$ of them must lie in its boundary wall $\partial \sigma$.
        \end{itemize}
        \item[(2)] For $i = 1, \dots, n$, let $s_i$ be the wall spanned by $\{\xi_j\}_{j \ne i}$ in $S$. Then $\displaystyle\bigcap_{i=1}^n s_i = \emptyset$. 
    \end{itemize}
\end{prop}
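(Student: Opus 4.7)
My plan is to take $\xi_1, \dots, \xi_n$ to be signed versions of the vertices of a single chamber, with the signs prescribed by a proper $2$-coloring of the Coxeter diagram of $W$. Realize $S$ as the unit sphere in $\RR^n$ on which $W$ acts as a finite reflection group, let $\alpha_1, \dots, \alpha_n$ be the simple roots cutting out a chamber $C$, and let $v_1, \dots, v_n \in S$ be the dual fundamental weight directions, i.e.\ $\alpha_j(v_i)=0$ for $j \neq i$ and $\alpha_i(v_i)>0$; these are the vertices of $C$. Because every Coxeter diagram is a forest, it is bipartite, so I pick a proper $2$-coloring $\epsilon : \{1,\dots,n\} \to \{\pm 1\}$, meaning $\epsilon_i \neq \epsilon_j$ whenever $\langle \alpha_i,\alpha_j\rangle \neq 0$, and set $\xi_i := \epsilon_i v_i$. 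Since walls are linear subspheres, the cell structure of $S$ is invariant under the antipodal map, so each $\xi_i$ is again a vertex of $S$.

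\textbf{Verification of (i), (ii), and (2).} The $v_i$ form a basis of $\RR^n$, so the $\xi_i$ also span $\RR^n$ and hence are not contained in any wall; two chamber vertices lie at angle $<\pi$ (a chamber has diameter $<\pi$ since it is contained in an open half-space), and flipping signs preserves this, so no two $\xi_i,\xi_j$ are antipodal. This gives (i). The smallest subsphere containing a finite set of points depends only on their linear span, so $\{\xi_j\}_{j\neq i}$ spans the same subsphere as $\{v_j\}_{j\neq i}$, namely the wall $s_i := \{\alpha_i = 0\} \cap S$, because the $n-1$ fundamental weights $v_j$ ($j\neq i$) are linearly independent and annihilate $\alpha_i$. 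Hence (ii) holds, and $\bigcap_{i=1}^n s_i = \{0\} \cap S = \emptyset$ because the $\alpha_i$ form a basis of the dual space, proving (2).

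\textbf{The main step: (iii).} A singular hemisphere has the form $\sigma = \{\alpha \geq 0\} \cap S$ for some positive root $\alpha = \sum_{i=1}^n c_i \alpha_i$ with $c_i \geq 0$. Then $\alpha(\xi_i) = \epsilon_i c_i \alpha_i(v_i)$, whose sign equals that of $\epsilon_i \cdot \mathrm{sgn}(c_i)$. Let $I=\{i : c_i>0\}$ be the support of $\alpha$. The crucial combinatorial input, standard for finite Coxeter systems and proved by induction on the height of $\alpha$, is that $I$ is a connected subset of the Coxeter diagram. If $|I|\geq 2$, connectedness produces an edge $\{i,j\} \subset I$ of the diagram, and by the $2$-coloring $\epsilon_i \neq \epsilon_j$; hence $\alpha(\xi_i)$ and $\alpha(\xi_j)$ have opposite signs, so no singular hemisphere bounded by $\{\alpha=0\}$ contains all of the $\xi_k$, and the conclusion of (iii) holds vacuously for such $\alpha$. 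If $|I|=1$, say $I=\{i_0\}$, then $\alpha(\xi_j)=0$ for every $j \neq i_0$, so $n-1$ of the vertices already lie on the boundary wall $\partial\sigma$, which is exactly what (iii) requires.

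\textbf{Main obstacle.} The heart of the argument is (iii), which distills to combining two standard facts: Coxeter diagrams are forests (hence bipartite), and supports of positive roots are connected subsets of the diagram. Once these are in place the correct sign convention is forced, and the remaining verifications are short linear-algebraic checks.
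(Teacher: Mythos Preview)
Your proof is correct and takes a genuinely different route from the paper's. The paper argues non-constructively: starting from the vertices of a chamber (which satisfy (i) and (ii)), it selects among all vertex tuples satisfying (i)--(ii) one whose convex hull contains the maximal number of chambers, and shows by a pushing argument that any violation of (iii) would allow enlarging the hull while preserving (i)--(ii). Part (2) is then deduced from the observation that any $p$ of the chosen vertices span a $(p-1)$-sphere.

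Your approach is instead an explicit construction: you take $\xi_i=\epsilon_i v_i$ with $v_i$ the fundamental-weight vertices of a fixed chamber and $\epsilon$ a proper $2$-coloring of the (bipartite, since tree-shaped) Coxeter diagram. Property (iii) then reduces to two standard combinatorial facts about finite Coxeter systems: that the diagram is a forest, and that the support of every positive root is connected. This is more algebraic but has the advantage of being fully explicit --- one sees immediately that the walls $s_i$ are exactly the simple-root walls $\{\alpha_i=0\}$, which also makes (2) a one-line linear-algebra check. The paper's argument, by contrast, is more elementary and self-contained (no appeal to root combinatorics), and works uniformly at the level of the spherical complex without choosing coordinates. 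Both are valid; yours trades a bit of background for concreteness.
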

\begin{proof}
    $(1)$ The set of vertices satisfying $(i)$ and $(ii)$ is nonempty since it contains the vertices of a chamber. Take vertices $\xi_1,\dots, \xi_n$ satisfying $(i)$ and $(ii)$ such that $\hull(\xi_1,\dots, \xi_n)$ contains the maximum number of chambers, and let us show that they satisfy $(iii)$. Let $\sigma$ be a singular hemisphere containing all these vertices, and suppose that the interior of $\sigma$ contains more than one vertex. To simplify notations, suppose that $\xi_1,\dots,\xi_p \in \sigma \backslash \partial \sigma $, with $p\geq 2$. We use $\xi_1$ to ``push'' $\xi_2, \dots, \xi_p$ to $\partial \sigma$. For $i = 2 \dots p$, extend the geodesic segment $\overline{\xi_1\xi_i}$ to $\overline{\xi_1\xi_i'}$, where $\xi_i'$ is its first intersection with $\partial \sigma$. It is clear that the convex hull of $\xi_1, \xi_2', \dots, \xi_p',\xi_{p+1}, \dots, \xi_n$ is bigger than the initial one, and we claim that they still satisfy $(i)$ and $(ii)$, which leads to a contradiction. 

    \medskip \noindent
    Indeed, $\xi_1$ is clearly not opposite to any $\xi_i'$. If some $\xi_i'$ is opposite to some $\xi_j'$ (similarly if some $\xi_i'$ is opposite to some $\xi_j$), then $\xi_1,\xi_i,\xi_j$ lie in a same singular $1$-sphere, therefore any $(n-1)$ vertices of $\xi_1,\dots, \xi_n$ that contain them span a sphere of dimension $<n-2$, which contradicts the fact that $\xi_1,\dots, \xi_n$ satisfy $(ii)$. If $\xi_1, \xi_2', \dots, \xi_p',\xi_{p+1}, \dots, \xi_n$ are contained in a wall $s$, then their convex hull is in $s$, in particular $\xi_1,\dots, \xi_n \in s$, which contradicts $(i)$. Therefore they still satisfy $(i)$. 

    \medskip \noindent
    Finally, note that $\xi_2', \dots, \xi_p',\xi_{p+1}, \dots, \xi_n$ are all in $\partial \sigma$, which is a wall, and they do not span a smaller singular sphere, otherwise the convex hull would not be $(n-1)$-dimensional. Also, for any $k \in \{2, \dots, p\}$, $\xi_1, \xi_2', \dots,\xi'_{k-1},\xi'_{k+1}, \dots, \xi_p',\xi_{p+1}, \dots, \xi_n$ are in the same wall that $\xi_1, \dots,\xi_{k-1},\xi_{k+1}, \dots, \xi_n$ span, and they cannot span a smaller sphere by the same argument. Similarly if one removes one of $\xi_{p+1}, \dots, \xi_n$. Therefore $\xi_1, \xi_2', \dots, \xi_p',\xi_{p+1}, \dots, \xi_n$ also satisfy $(ii)$. 
    
    \medskip \noindent
    $(2)$ Let us denote the opposites of $\xi_1,\dots, \xi_n$ in $S$ by $\hat{\xi_1},\dots, \hat{\xi_n}$.
    Note that, by the same argument as before, for any $\xi_{k_1},\dots, \xi_{k_p}$, the singular sphere that they span is $(p-1)$-dimensional. In particular, for any $i\in \{1,\dots,n\}$, $\xi_i, \hat{\xi_i} \not \in s_i$. Moreover, $s_i\cap s_j$ is spanned by $\{\xi_k\}_{k \ne i,j}$. So $\cap_{i \ne j} s_i = \{ \xi_j, \hat{\xi_j} \}$, which are not in $s_j$. \qedhere
\end{proof}
\begin{rem}
If the spherical Coxeter complex $S$ is an apartment in $\bound X$, then the choice of the vertices $\xi_1,\dots, \xi_n$ no longer depends on $S$: if $S'$ is another apartment containing them, then they also satisfy $(1)$ and $(2)$ of \cref{choice} in $S'$.
\end{rem}
\begin{defn}\label{def max distributed vertices}
    Let $S$ be a spherical Coxeter complex of dimension $(n-1)$, and $\xi_1,\dots, \xi_n \in S$ vertices. We say that $\xi_1,\dots, \xi_n$ are \emph{maximally distributed} if they satisfy the condition $(1)$ in Proposition \ref{choice}. See \cref{fig max distributed vertices} for some examples.
\end{defn}

\medskip\noindent
\begin{figure}[!ht]
    \centering
    \def\svgwidth{0.27\textwidth}
    {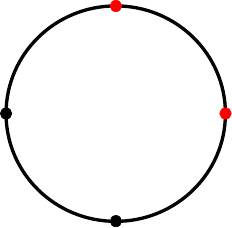}
    \, \, \, \, \, 
    \def\svgwidth{0.27\textwidth}
    {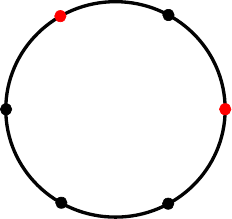}
    \, \, \, \, \, 
    \def\svgwidth{0.30\textwidth}
    {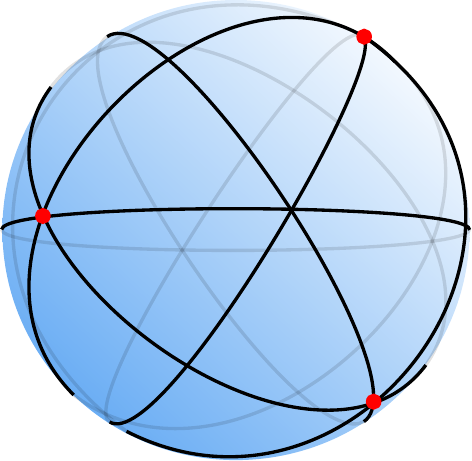}
    \caption{Maximally distributed vertices in $A_1 \times A_1$, $A_2$, and $A_3$.}
    \label{fig max distributed vertices}
\end{figure}

\medskip\noindent
\begin{prop}\label{fibers}
    Let $X$ be a symmetric space or a Euclidean building, $F$ a maximal-flat in $X$, and $\xi_1,\dots, \xi_n$ maximally distributed vertices in $\bound F$. For $i=1, \dots, n$, let $s_i$ be the wall in $\bound F$ spanned by $\{\xi_j\}_{j \ne i}$, and let $\eta_i$ be an interior point of a top-dimensional cell of $\hull(\{\xi_j\}_{j \ne i}) \subset s_i$ (see \cref{fig max distrib A3} for an example in $A_3$). Let $\pi_i$ be the projection map onto $CS(s_i)$ via $\eta_i$. Then for any maximal-flat $F'$ containing all the $\xi_i$'s at infinity, the fibers of $\pi_i$ in $F'$ are the $(n-1)$-flats with $\{\xi_j\}_{j \ne i}$ at infinity. In particular, $\pi_i$ is constant on the $(n-1)$-flat containing $\{\xi_j\}_{j \ne i}$ at infinity.
\end{prop}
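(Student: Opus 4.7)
The plan is to identify the $\pi_i$-fibers in $F'$ with the codimension-one affine foliation of $F'$ whose leaves are the $(n-1)$-flats with boundary $s_i$, and then show $\pi_i$ separates distinct leaves.

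First, I would record the key trivializing observation: if $H$ is any $(n-1)$-flat with $\bound H = s_i$ and $x \in H$, then $\eta_i \in s_i = \bound H$ and $H$ is a complete convex geodesic subspace of $X$, so the ray $[x,\eta_i)$ is contained in $H$. In particular $[x,\eta_i)$ is trivially strongly asymptotic to $H$, hence by the uniqueness in \cref{uniqueness projection}, $\pi_i(x) = H$. Thus $\pi_i$ is constant and equal to $H$ on each such leaf.

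Second, I would check that such leaves partition $F'$. Since $\{\xi_j\}_{j\ne i} \subset \bound F'$ and singular subspheres spanned by a set of vertices in the spherical building $\bound X$ are intrinsic (independent of the apartment containing them), the sphere $s_i$ is a wall of the spherical Coxeter complex $\bound F'$. Inside the Euclidean Coxeter complex $F'$, the flats whose boundary equals the wall $s_i$ are exactly the affine translates of a fixed $(n-1)$-dimensional linear subspace, so they form a parallel foliation of $F'$, and every $x \in F'$ belongs to a unique such leaf $H_x$. Combined with the first step, $\pi_i(x) = H_x$ for every $x \in F'$.

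Third, I would argue that distinct leaves yield distinct $\pi_i$-values: if $H_1 \ne H_2$ are two $(n-1)$-flats in $F'$ with boundary $s_i$, they are distinct flats in $X$ and therefore distinct elements of $CS(s_i)$; by the first step $\pi_i|_{H_j} \equiv H_j$, so $\pi_i(H_1) \ne \pi_i(H_2)$. Putting the three steps together, the fibers of $\pi_i|_{F'}$ are exactly the $(n-1)$-flats in $F'$ with $\{\xi_j\}_{j\ne i}$ at infinity, which gives the proposition. The only delicate point is the identification of $s_i$ as a wall of $\bound F'$, and this is handled by the building-theoretic fact that the singular sphere spanned by a given set of vertices is well-defined in $\bound X$.
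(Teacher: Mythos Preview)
Your argument has a genuine gap in the second step. You assert that $s_i$ is a wall of $\bound F'$, relying on the claim that ``the singular sphere spanned by a given set of vertices is well-defined in $\bound X$''. This is false: the minimal singular sphere through a set of vertices depends on the apartment, because it necessarily contains the antipodes of those vertices \emph{in that apartment}. Concretely, in rank $2$ take $\xi_1,\xi_2$ the vertices of a chamber $C$; then $s_1$ is the pair $\{\xi_2,\hat\xi_2\}$ with $\hat\xi_2$ the antipode of $\xi_2$ in $\bound F$. Any other apartment $\bound F'$ containing $C$ has its own antipode $\hat\xi_2'$ of $\xi_2$, and in a thick building one can choose $F'$ with $\hat\xi_2'\neq\hat\xi_2$, so $s_1\not\subset\bound F'$. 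Once $s_i\not\subset\bound F'$ there are \emph{no} $(n-1)$-flats in $F'$ with boundary $s_i$, so your foliation collapses and the ``trivializing observation'' $\pi_i(x)=H$ from Step~1 never applies inside $F'$.

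The fibers described in the statement are the $(n-1)$-flats $H'\subset F'$ with $\{\xi_j\}_{j\neq i}$ at infinity; their boundary is the wall $s_i'$ of $\bound F'$ spanned by these vertices, which in general differs from $s_i$. For $x\in H'$ one still has $[x,\eta_i)\subset H'$ (since $\eta_i\in\hull(\{\xi_j\}_{j\neq i})\subset s_i'$), but $H'\notin CS(s_i)$, so this alone does not compute $\pi_i(x)$. The paper bridges this by observing that $\bound H\cap\bound H'\supset\hull(\{\xi_j\}_{j\neq i})$ is full-dimensional with $\eta_i$ interior to a top cell, and then invoking \cref{all flat strongly asymptotic} to propagate the strong asymptoticity of $[z,\eta_i)$ to $H:=\pi_i(z)$ from one point $z\in H'$ to every point of $H'$; separation of distinct leaves is handled by \cref{cor of prop}. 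Your Step~1 is fine for the special case $F'=F$ (or more generally whenever $s_i\subset\bound F'$), but to cover arbitrary $F'$ you must replace the direct identification $\pi_i|_{H'}\equiv H'$ by this transfer argument.
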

\begin{figure}[!ht]
    \centering
    \def\svgwidth{0.4\textwidth}
    {
\begingroup%
  \makeatletter%
  \providecommand\color[2][]{%
    \errmessage{(Inkscape) Color is used for the text in Inkscape, but the package 'color.sty' is not loaded}%
    \renewcommand\color[2][]{}%
  }%
  \providecommand\transparent[1]{%
    \errmessage{(Inkscape) Transparency is used (non-zero) for the text in Inkscape, but the package 'transparent.sty' is not loaded}%
    \renewcommand\transparent[1]{}%
  }%
  \providecommand\rotatebox[2]{#2}%
  \newcommand*\fsize{\dimexpr\f@size pt\relax}%
  \newcommand*\lineheight[1]{\fontsize{\fsize}{#1\fsize}\selectfont}%
  \ifx\svgwidth\undefined%
    \setlength{\unitlength}{274.10975612bp}%
    \ifx\svgscale\undefined%
      \relax%
    \else%
      \setlength{\unitlength}{\unitlength * \real{\svgscale}}%
    \fi%
  \else%
    \setlength{\unitlength}{\svgwidth}%
  \fi%
  \global\let\svgwidth\undefined%
  \global\let\svgscale\undefined%
  \makeatother%
  \begin{picture}(1,0.80567944)%
    \lineheight{1}%
    \setlength\tabcolsep{0pt}%
    \put(0,0){\includegraphics[width=\unitlength,page=1]{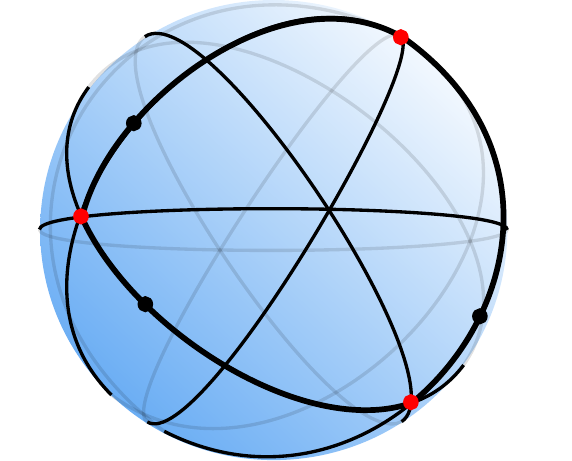}}%
    \put(-0.00242262,0.4526562){\color[rgb]{1,0,0}\makebox(0,0)[lt]{\lineheight{1.25}\smash{\begin{tabular}[t]{l}$\xi_1$\end{tabular}}}}%
    \put(0.73933402,0.03335507){\color[rgb]{1,0,0}\makebox(0,0)[lt]{\lineheight{1.25}\smash{\begin{tabular}[t]{l}$\xi_2$\end{tabular}}}}%
    \put(0.71860476,0.77120741){\color[rgb]{1,0,0}\makebox(0,0)[lt]{\lineheight{1.25}\smash{\begin{tabular}[t]{l}$\xi_3$\end{tabular}}}}%
    \put(0,0){\includegraphics[width=\unitlength,page=2]{A3_with_points.pdf}}%
    \put(0.8750073,0.22192523){\color[rgb]{0,0,0}\makebox(0,0)[lt]{\lineheight{1.25}\smash{\begin{tabular}[t]{l}$\eta_1$\end{tabular}}}}%
    \put(0.17714923,0.62610231){\color[rgb]{0,0,0}\makebox(0,0)[lt]{\lineheight{1.25}\smash{\begin{tabular}[t]{l}$\eta_2$\end{tabular}}}}%
    \put(0.22811486,0.20609907){\color[rgb]{0,0,0}\makebox(0,0)[lt]{\lineheight{1.25}\smash{\begin{tabular}[t]{l}$\eta_3$\end{tabular}}}}%
  \end{picture}%
\endgroup%
}
    \caption{}
    \label{fig max distrib A3}
\end{figure}
\begin{proof}
    Suppose $i=1$, let $z\in F'$, and let $H'$ be the $(n-1)$-flat in $F'$ containing $z$ and containing $\xi_2,\dots \xi_n$ at infinity. Let us denote $H = \pi_1(z)$. $\eta_1$ is an interior point of a top-dimensional cell of $\bound H \cap \bound H'$, and $[z,\eta_1)$ is strongly asymptotic to $H$, so for any $y \in H'$ $[y,\eta_1)$ is also strongly asymptotic to $H$ by \cref{all flat strongly asymptotic}, i.e.\ $\pi_1(H') = \pi_1(z)$. 
    \\ If $z' \in F'$ is not contained in $H'$, then $\pi_1(z') \ne \pi_1(z)$ by \cref{cor of prop}.\qedhere 
\end{proof}
\section{Proof of the main result}\label{section proof main result}
Let us recall the setting. Let $X$ be a Euclidean building or a symmetric space of non-compact type of rank $n$. Let $F_0$ be a maximal-flat, and $\xi_1,\dots, \xi_n \in \bound F_0$ maximally distributed vertices, see \cref{def max distributed vertices}. For all $i=1,\dots, n$, let $s_i$ be the wall in $\bound F_0$ spanned by $\{\xi_j\}_{j \ne i}$, and $\eta_i$ an interior point of a top-dimensional cell of $\hull(\{\xi_j\}_{j \ne i}) \subset s_i$. 

\medskip\noindent
We denote $\Delta = \hull(\xi_1,\dots, \xi_n)$, and let $X_{\Delta}$ be the union of maximal-flats in $X$ that contain $\Delta$ at infinity, and let $\pi_i$ be the projection onto $CS(s_i)$ via $\eta_i$ defined in Definition \ref{proj}. Finally, let 
$$\pi : X_{\Delta} \to CS(s_1) \times \dots \times CS(s_n)$$ 
be the restriction of the product map $\pi_1 \times \dots \times \pi_n$ to $X_{\Delta}$. Each $CS(s_i)$ is equipped with the Hausdorff distance $d_i$, and we equip the product space with the $L^1$ product metric. We will show the following, which is a restatement of \cref{mainthm general}.
\begin{thm}\label{mainthm general2}
\noindent
    \begin{itemize}
        \item[(1)] If $X$ is a Euclidean building, $\pi$ is a bi-Lipschitz map. Moreover, the inclusion of $X_{\Delta}$, equipped with the path-metric, in $X$ is a bi-Lipschitz embedding, therefore it induces a bi-Lipschitz embedding from $CS(s_1) \times \dots \times CS(s_n)$ into $X$.
        \item[(2)] If $X$ is a symmetric space of non-compact type, $\pi$ is a quasi-isometry. Moreover, the inclusion of some $\delta$-neighborhood of $X_{\Delta}$, equipped with the path-metric, in $X$ is a quasi-isometric embedding, therefore it induces a quasi-isometric embedding from $ CS(s_1) \times \dots \times CS(s_n)$ into $X$.
    \end{itemize}
\end{thm}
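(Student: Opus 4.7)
Fix a maximal-flat $F$ with $\Delta \subset \bound F$. By \cref{fibers}, each $\pi_i|_F$ is constant on the singular $(n-1)$-flat of $F$ with $\hull(\{\xi_j\}_{j\ne i})$ at infinity, so $\pi_i|_F$ factors through projection to the one-dimensional direction $v_i \in F$ normal to this $(n-1)$-flat. By \cref{choice}(2), $\bigcap_i s_i = \emptyset$ in $\bound F_0$, which translates to the vectors $v_1, \ldots, v_n$ spanning $F \cong \mathbb{R}^n$. Thus $\pi|_F$ is, up to identifying each image with a line in the appropriate $CS(s_i)$, a linear isomorphism onto its image, with bi-Lipschitz constants depending only on the angles between the $v_i$ — a feature of the Coxeter type of $\bound F_0$, and in particular independent of $F$ and of $X$.

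\textbf{Step 2 (the core obstacle: bi-Lipschitz/QI on all of $X_\Delta$).} The upper Lipschitz bound is immediate since each $\pi_i$ is $1$-Lipschitz. The real difficulty is the lower bound when $x \in F$ and $x' \in F'$ lie in distinct maximal-flats containing $\Delta$ at infinity. My plan is to apply \cref{cor of prop}(2), which identifies, for the singular $(n-1)$-flat $F'_i \subset F'$ with $\hull(\{\xi_j\}_{j \ne i})$ at infinity,
\[
    d_i(\pi_i(x), \pi_i(x')) \;=\; d_X([x, \eta_i), F'_i).
\]
In the building case, \cref{entering parallel} guarantees that $[x, \eta_i)$ actually enters the parallel set $P(s_i)$, where the cross-section distance $d_i$ faithfully records displacement transverse to the parallel $(n-1)$-flats. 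Each such coordinate displacement can then be transported to $F'$ and combined with Step 1 to extract a component-wise lower bound on $d_X(x, x')$ along each direction $v_i$; the spanning condition of Step 1 converts these $n$ bounds into the desired estimate. In the symmetric-space case, only strong asymptoticity — not genuine entry into $P(s_i)$ — is available, which contributes an additive constant and degrades the embedding to a quasi-isometry. This passage from single-flat to multi-flat estimates is where I expect the main technical work to lie.

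\textbf{Step 3 (surjectivity/quasi-surjectivity).} Given a target $(H_1, \ldots, H_n) \in \prod_i CS(s_i)$, I would construct a maximal-flat $F \supset \Delta$ containing an $(n-1)$-flat parallel to each $H_i$. In the thick building case, the chamber complex structure together with the thickness assumption allows one to extend apartments through $\Delta$ one index at a time to pick up each desired parallel class. In the symmetric space case, the generalized Iwasawa decomposition of \cref{iwasawa} provides an $A_\eta N_\eta$-orbit through any basepoint that realizes the target up to a uniformly bounded error.

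\textbf{Step 4 (inclusion with the path metric).} Finally, I would upgrade the inclusion $(X_\Delta, d_X) \hookrightarrow X$ to a bi-Lipschitz (resp.\ quasi-isometric) embedding of $(X_\Delta, d_{\mathrm{path}})$ — or, in the symmetric-space case, of a $\delta$-neighbourhood — by producing, for any $x, x' \in X_\Delta$, a piecewise geodesic in $X_\Delta$ joining them whose length is comparable to $d_X(x, x')$. The mechanism is that any two maximal-flats through $\Delta$ at infinity share a common Weyl sector, so one can hop between overlapping flats containing $\Delta$ at finite cost, each hop controlled by Step 1 inside the corresponding flat. Combined with Steps 2 and 3, this yields a bi-Lipschitz (resp.\ quasi-isometric) embedding $CS(s_1) \times \cdots \times CS(s_n) \hookrightarrow X$, as desired.
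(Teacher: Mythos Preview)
Your Step~2 has a genuine gap: it never invokes condition~(iii) of \cref{choice} (maximal distribution), which is the crux of the cross-flat injectivity. The paper's argument runs as follows: if $x\in F_x$, $y\in F_y$ with $\pi_i(x)=\pi_i(y)$ for all $i$ but $x\notin F_y$, then $F_x\cap F_y$ is a Weyl polyhedron $\bigcap_k M_k$ with $\Delta\subset\bound M_k$ for each $k$, and $x$ lies outside some singular half-space $M$ with boundary wall $H$. Maximal distribution forces $n-1$ of the $\xi_j$ into $\bound H$, hence some $\eta_i\in\bound H$; then $[x,\eta_i)$ stays parallel to $H$ and is bounded away from $M\supset F_x\cap F_y$, contradicting that $\pi_i(x)=\pi_i(y)$ makes $[x,\eta_i)$ strongly asymptotic to $F_y$. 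Your plan to ``transport the coordinate displacement to $F'$ and combine with Step~1'' offers no mechanism replacing this step, and it cannot: the paper explicitly remarks after Step~1 that injectivity across distinct flats \emph{fails} when the $\xi_i$ are not maximally distributed, so any argument relying only on the spanning condition $\bigcap_i s_i=\emptyset$ (which is all you use) is doomed.

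You are also missing the second ingredient of Step~2: the rank-one structure of each $CS(s_i)$. Once (quasi-)injectivity is established, the bi-Lipschitz bound is obtained by moving along $[x,\xi_i)$ and $[y,\xi_i)$ toward the branching point of the two rays $\pi_i([x,\xi_i))$, $\pi_i([y,\xi_i))$ in the \emph{tree} $CS(s_i)$ (respectively, toward a $\delta$-close pair via \cref{cor quasi path in rank one} in the symmetric-space case), equalizing one coordinate at a time; your outline never exploits that $CS(s_i)$ is one-dimensional. Your Steps~3 and~4 are in roughly the right spirit but too schematic: the paper's surjectivity uses neither thickness nor Iwasawa, but rather the ``move along $[x,\xi_i)$, then back along $[\,\cdot\,,\xi_i')$'' mechanism of \cref{claim entering flat}; and Step~4 is proved via a uniform bound on the branching distance of rays toward a fixed regular $\eta$ (\cref{lem cv geodesics in building}, \cref{lem convergence of geodesics}), not by hopping through shared Weyl sectors.
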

\medskip\noindent
We refer to \cref{subsect overview} for an overview of the proof, and its steps.

\medskip\noindent
Throughout the proof, $n$ is the rank of $X$. Since \cref{mainthm general} is trivial in rank $1$, we suppose $n\geq 2$. In particular, the angular $\angle$ and Tits metric $d_T$ coincide in $\bound X$.

\medskip \noindent
\textbf{Step $1$: $\pi$ is a bi-Lipschitz embedding when restricted to a flat containing $\Delta$ at infinity (with uniform constants):}
\medskip \noindent
\\Let $F \subset X_{\Delta}$ be a maximal-flat in $X$ containing $\Delta$ at infinity.

\medskip \noindent
• \textit{Substep 1: $\pi$ is injective.} 

\medskip \noindent
Let $x,y \in F$ such that for all $i= 1 , \dots , n$, $\pi_i(x) = \pi_i(y)$. By \cref{fibers}, $\pi_i(x) = \pi_i(y)$ implies that $x$ and $y$ are in a same $(n-1)$-flat $H_i \subset F$ such that $\{\xi_j\}_{j \ne i} \subset \bound H_i$. The flats $H_i$ intersect in a single point because $\cap_{i=1}^n \bound H_i = \emptyset$ by \cref{choice}. Therefore $x=y$.

\medskip \noindent
• \textit{Substep 2: $\pi$ is a bi-Lipschitz embedding.} 

\medskip \noindent
The goal is to show that there exists $\alpha >0 $ such that for any $x,y \in F$ $d_X(x,y) \leq \alpha \sum_{i=1}^n d_i(\pi_i(x),\pi_i(y)) $.

\medskip \noindent
Let us denote the opposites of $\xi_1,\dots, \xi_n$ in $\bound F$ by $\hat{\xi_1},\dots, \hat{\xi_n}$. Let us start by noting that for all $i=1, \dots, n$, $\pi_i(F)$ is a constant speed path in $CS(s_i)$. Also, by \cref{fibers}, when moving in $F$ along a geodesic going to $\xi_i$, only $\pi_i$ changes, the rest of the projections are constant. Therefore, to go from $x$ to $y$, we start from $x$ by following a geodesic with endpoints $\{ \xi_1, \hat{\xi_1} \}$ until we equalize $\pi_1$, then we do the same for the other directions. After equalizing all $\pi_i$'s, by injectivity in the previous substep, we would have reached $y$.
\medskip \noindent
\\ Let $\theta_i = d_T(\xi_i, s_i) $, and let $x,y$ be in a same geodesic with endpoints $\xi_i,\hat{\xi_i}$. If $\theta_i = \pi/2$, $d_X(x,y) = d_i(\pi_i(x),\pi_i(y))$. \\If $\theta_i < \pi/2$, $d_X(x,y) = \alpha_i \, d_i(\pi_i(x),\pi_i(y))$, where $\alpha_i = \frac{1}{\tan(\theta_i)}$. Let us denote $\alpha = \displaystyle\max_{i} \alpha_i$. By concatenating such paths, we have $\forall x,y \in F$, $ d_X(x,y) \leq \alpha \sum_{i=1}^n d_i(\pi_i(x),\pi_i(y)) $. Note that $\alpha $ is independent of the maximal-flat $F$, and the previous inequality holds whenever $x$ and $y$ are in a same maximal-flat containing $\Delta$ at infinity.
\begin{rem}
    Let us note that the maximal distribution of the vertices $\xi_1,\dots, \xi_n$ is not needed for the injectivity of $\pi$ inside a same flat containing $\Delta$ at infinity. We only needed that the walls spanned by $\{\xi_j\}_{j \ne i}$, for any $i$, have trivial intersection. So the result still holds in a same flat if $\Delta$ consisted of a single chamber for example. However, the injectivity fails without the the maximal distribution of the vertices if one considers different maximal flats with $\Delta$ at infinity.
\end{rem}

\medskip \noindent
\textbf{Step $2$: $\pi$ is a bi-Lipschitz (resp.\ quasi-isometric) embedding:}

\medskip \noindent
• \textit{Substep 1: $\pi$ is injective (resp.\ quasi-injective).} 

\medskip \noindent
By quasi-injective, we mean that for any $\delta$ big enough, there exists $D\geq 0$ such that for any $x,y \in X_{\Delta}$, if $d_i(\pi_i(x),\pi_i(y)) \leq \delta$ for all $i=1,\dots,n$ then $d_X(x,y) \leq D$. 
\\The proof for Euclidean buildings and for symmetric spaces is similar, but, for the sake of clarity, we will treat them separately. 

\medskip \noindent
\underline{$X$ is a Euclidean building:}
Let $x,y \in X_{\Delta}$ such that $\pi_i(x) = \pi_i(y)$ for $i=1,\dots,n$. If they lie in a same maximal-flat containing $\Delta$ at infinity, then we are done by the previous step. If not, let $F_x$ and $F_y$ be maximal-flats such that $x \in F_x$, $y \in F_y$ and $\Delta \subset \bound F_x \cap \bound F_y$. $\Delta \subset \bound F_x \cap \bound F_y$, so $F_x$ and $F_y$ share a chamber at infinity and must intersect \cite[Lemma 4.6.5]{kleiner1997rigidity}. Moreover, by \cite[Cor.\ 4.4.6]{kleiner1997rigidity}, their intersection is a Weyl polyhedron, i.e.\ an intersection of singular half-spaces $\{M_i\}_{i\in I}$ of $F_x$, so $F_x \cap F_y = \cap_{i\in I} M_i$. Note that for all $i\in I$, $\Delta \subset \bound M_i$. By assumption, $x \not \in F_y$, so it is not inside one of these singular half-spaces. Let us denote it by $M$, and let $H$ be its boundary wall. For every $i=1,\dots,n$, $\xi_i \in \bound M$, and $\bound M$ is a singular hemisphere in $\bound F_x$. Since $\xi_1,\dots, \xi_n$ are maximally distributed, by \cref{choice}, $\exists i \in \{1, \dots,n\}$ such that for all $j \ne i$, $\xi_j \in \bound H$, and in particular $\eta_i \in \bound H$. With loss of generality, we suppose $i=1$. This implies that $[x,\eta_1)$ stays parallel to $H$ (because $\eta_1 \in \bound H$), and therefore never enters in $M$, see \cref{flat intersec}:
$$d_X([x,\eta_1),H) = d_X(x,H)>0.$$
\begin{figure}[!ht]
    \centering
    \def\svgwidth{0.45\textwidth}
    {
\begingroup%
  \makeatletter%
  \providecommand\color[2][]{%
    \errmessage{(Inkscape) Color is used for the text in Inkscape, but the package 'color.sty' is not loaded}%
    \renewcommand\color[2][]{}%
  }%
  \providecommand\transparent[1]{%
    \errmessage{(Inkscape) Transparency is used (non-zero) for the text in Inkscape, but the package 'transparent.sty' is not loaded}%
    \renewcommand\transparent[1]{}%
  }%
  \providecommand\rotatebox[2]{#2}%
  \newcommand*\fsize{\dimexpr\f@size pt\relax}%
  \newcommand*\lineheight[1]{\fontsize{\fsize}{#1\fsize}\selectfont}%
  \ifx\svgwidth\undefined%
    \setlength{\unitlength}{241.43846948bp}%
    \ifx\svgscale\undefined%
      \relax%
    \else%
      \setlength{\unitlength}{\unitlength * \real{\svgscale}}%
    \fi%
  \else%
    \setlength{\unitlength}{\svgwidth}%
  \fi%
  \global\let\svgwidth\undefined%
  \global\let\svgscale\undefined%
  \makeatother%
  \begin{picture}(1,0.57043696)%
    \lineheight{1}%
    \setlength\tabcolsep{0pt}%
    \put(0,0){\includegraphics[width=\unitlength,page=1]{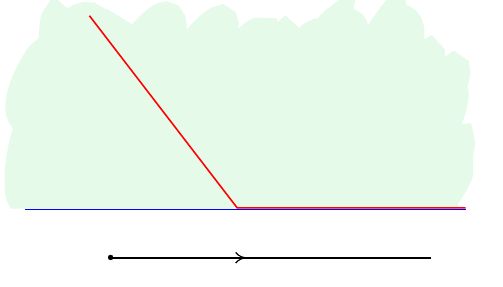}}%
    \put(0.55006572,0.31312024){\color[rgb]{1,0,0}\makebox(0,0)[lt]{\lineheight{1.25}\smash{\begin{tabular}[t]{l}$F_x\cap F_y$\end{tabular}}}}%
    \put(0.89960978,0.10863639){\color[rgb]{0,0,1}\makebox(0,0)[lt]{\lineheight{1.25}\smash{\begin{tabular}[t]{l}$H$\end{tabular}}}}%
    \put(0.01894894,0.24402422){\color[rgb]{0.15686275,0.6745098,0.23529412}\makebox(0,0)[lt]{\lineheight{1.25}\smash{\begin{tabular}[t]{l}$M$\end{tabular}}}}%
    \put(0.20553055,0.01515957){\color[rgb]{0,0,0}\makebox(0,0)[lt]{\lineheight{1.25}\smash{\begin{tabular}[t]{l}$x$\end{tabular}}}}%
    \put(0.92904669,0.04692988){\color[rgb]{0,0,0}\makebox(0,0)[lt]{\lineheight{1.25}\smash{\begin{tabular}[t]{l}$\eta_1$\end{tabular}}}}%
    \put(0.44535254,0.00781449){\color[rgb]{0,0,0}\makebox(0,0)[lt]{\lineheight{1.25}\smash{\begin{tabular}[t]{l}$[x,\eta_1)$\end{tabular}}}}%
  \end{picture}%
\endgroup%
}
    \caption{$[x,\eta_1)$ staying outside of $M$.}
    \label{flat intersec}
\end{figure}
On the other hand, $\pi_1(x) = \pi_1(y)$ implies, by \cref{all flat strongly asymptotic}, that $[x,\eta_1)$ is strongly asymptotic to $F_y$. However, $F_x \cap F_y \subset M$ and $H$ is convex so
$$ 0 = d_X([x,\eta_1),F_y) = d_X([x,\eta_1),F_x \cap F_y) \geq d_X([x,\eta_1),M) = d_X([x,\eta_1),H) >0.$$
We get a contradiction. So the condition $\pi_i(x) = \pi_i(y)$ for $i=1,\dots,n$ implies that $x$ and $y$ lie in same maximal-flat containing $\Delta$ at infinity, and we are done.

\medskip \noindent
\underline{$X$ is a symmetric space:} 
let $x,y \in X_{\Delta}$ such that $d_i(\pi_i(x),\pi_i(y)) \leq 1$ for any $i=1,\dots,n$. Let $F_x$ and $F_y$ be maximal-flats such that $x \in F_x$, $y \in F_y$, and $\Delta \subset \bound F_x \cap \bound F_y$. In particular, $F_x$ and $F_y$ share a chamber at infinity so $d_X(F_x,F_y) = 0$. The following lemma is a special case of \cite[Lemma B.1]{eskin1998quasi} (see also \cite[Chapter 7]{mostow1973strong}).
\begin{lem}
    There exist constants $\lambda_0$ and $\lambda$ depending only on $X$ such that the following holds: if $F_1$ and $F_2$ are maximal-flats in $X$ with $d_X(F_1,F_2) = 0$, then for any $\delta \geq \lambda_0$, there exist singular convex polyhedrons $P$ and $P'$ in $F_1$ (i.e.\ intersections of singular half-spaces in $F_1$) such that $d_{Haus}(P,P') \leq \lambda \delta$ and
    $$P' \subset F_1 \cap N_{\delta}(F_2) \subset P.$$
\end{lem}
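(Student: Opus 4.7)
The plan is to use the generalized Iwasawa decomposition to encode the position of $F_2$ relative to $F_1$ by a finite list of nilpotent coordinates, then to identify $F_1 \cap N_\delta(F_2)$, up to bounded additive error, with an explicit intersection of singular half-spaces in $F_1$, and finally to sandwich it between outer and inner such polyhedra.

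First, since $d_X(F_1,F_2) = 0$, by \cite{kleiner1997rigidity} the two flats share a Weyl chamber $\sigma$ at infinity. I would pick a regular interior point $\eta \in \sigma$ and a basepoint $o \in F_1$ lying on a geodesic to $\eta$, and invoke the generalized Iwasawa decomposition (\cref{iwasawa}): $G = K A_\eta N_\eta$. Since $\eta$ is regular, $A_\eta = A$ is the full Cartan subgroup, so the stabilizer of $F_1$ in $G$ contains $A$ together with the centralizer of $A$ in $K$; modulo this stabilizer one may arrange $F_2 = gF_1$ with $g \in N := N_\eta$. Writing $g = \exp(Z)$ with $Z = \sum_{\alpha \in \Phi^+} Z_\alpha$ in the root-space decomposition $\mathfrak{n} = \bigoplus_{\alpha(Y) > 0} \mathfrak{g}_\alpha$, the finite list $\{Z_\alpha\}_{\alpha \in \Phi^+}$ encodes the relative position of $F_1$ and $F_2$.

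Next, I would identify $F_1$ with $\mathfrak{a}$ via $H \mapsto \exp(H)\cdot o$ and establish the following comparison: there exists $C = C(X) > 0$ such that for every $H \in \mathfrak{a}$,
\[
C^{-1}\max_{\alpha \in \Phi^+} \|Z_\alpha\|\,e^{-\alpha(H)} - C \;\leq\; d_X\bigl(\exp(H)\cdot o,\, F_2\bigr) \;\leq\; C\max_{\alpha \in \Phi^+} \|Z_\alpha\|\,e^{-\alpha(H)} + C.
\]
The upper bound follows from the conjugation formula $\exp(H)^{-1}g\exp(H) = \exp\bigl(\sum_\alpha e^{-\alpha(H)}Z_\alpha + \text{higher commutators}\bigr)$ combined with the isometry of $K$-translates; the matching lower bound is obtained by projecting onto the rank-one subsystem associated to the dominant root and using the convexity of the Busemann function based at $\eta$. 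After taking logarithms, the condition $d_X(x, F_2) \leq \delta$ becomes, for $\delta$ above a universal threshold, a finite system of singular affine inequalities $\alpha(H) \geq \log \|Z_\alpha\| - \log\delta - C'$ for $\alpha \in \Phi^+$.

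Finally, each such inequality defines a singular half-space of $F_1$, since translates of $\ker\alpha$ are walls of its Euclidean Coxeter structure. I would define $P$ by relaxing every constant by an additive amount $2C'$ (so that every $x \in F_1 \cap N_\delta(F_2)$ satisfies the resulting looser system) and $P'$ by tightening every constant by $2C'$ (so that every $x \in P'$ satisfies $d_X(x,F_2) \leq \delta$); this produces $P' \subset F_1 \cap N_\delta(F_2) \subset P$ directly from the comparison. Since $P$ and $P'$ are intersections of the same family of singular half-spaces with defining constants differing by a universal bounded amount, and since the angles between distinct roots are bounded away from $0$ by a constant depending only on $X$, a standard convex-geometry estimate yields $d_{\mathrm{Haus}}(P,P') \leq C_0 = C_0(X)$; setting $\lambda_0 := C_0$ and $\lambda := 1$ gives the stated bound. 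The main obstacle is the lower bound in the distance comparison above: one must rule out that geometric cancellations among different root directions in $\mathfrak{n}$ bring $F_2$ much closer to $x$ than the worst single-root term predicts. This is precisely the content of Eskin's Lemma~B.1 in the appendix of \cite{eskin1998quasi}, itself a reorganization of arguments in \cite[Chap.~7]{mostow1973strong}, which may be imported directly.
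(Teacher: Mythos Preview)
The paper does not give an independent proof of this lemma; it simply records it as a special case of \cite[Lemma~B.1]{eskin1998quasi} (with a pointer to \cite[Chap.~7]{mostow1973strong}). Your proposal sketches precisely the Iwasawa/root-space argument that underlies Eskin's lemma and, at the crucial lower-bound step, defers to the very same reference---so you are in full agreement with the paper, only supplying more detail than it does.
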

\noindent
Let us fix $\delta > \lambda_0$, and let $P$ and $P'$ be the singular convex polyhedrons in $F_x$ such that $d_{Haus}(P,P') \leq \lambda \delta$, $P' \subset F_x \cap N_{\delta}(F_y) \subset P$, and let $(M_i)_{i\in I}$ be the singular half-spaces in $F_x$ such that $P = \cap_{i\in I} M_i$. Note that for any $i \in I$, $\Delta \subset \bound M_i$. Let us show that the condition $d_i(\pi_i(x),\pi_i(y)) \leq \delta$ for any $i=1,\dots,n$ implies that $x \in P$. If $x \in M_i$ for any $i \in I$, we are done. If not, let us denote this singular half-space by $M$, and let $H$ be its wall. Again, $\xi_1,\dots, \xi_n \in \bound M$ are maximally distributed, so $\exists i \in \{1, \dots,n\}$ such that for all $j \ne i$, $\xi_j \in \bound H$, and in particular $\eta_i \in \bound H$. We assume again that $i=1$. So $[x,\eta_1)$ stays parallel to $H$: $d_X(x,H) = d_X([x,\eta_1),H)$.
\\On the other hand, by \cref{cor of prop}, $d_1(\pi_1(x),\pi_1(y)) \leq \delta$ implies that $d_X([x,\eta_1),F_y) \leq \delta$. Since $[x,\eta_1)\subset F_x$, we get
$$   d\left([x,\eta_1),F_x \cap N_{\delta}(F_y)\right) = 0 .      $$
$F_x \cap N_{\delta}(F_y) \subset P \subset M$, so $d\left([x,\eta_1),M\right) = 0$. However, $x \not\in M$ and $H$ is the boundary wall of $M$, so
$$ d\left([x,\eta_1),M\right) = d\left([x,\eta_1),H\right) = d_X(x,H).$$
Therefore $x \in H$, and we get a contradiction. 

\medskip \noindent
Hence, the condition $d_i(\pi_i(x),\pi_i(y)) \leq \delta$ for all $i=1,\dots,n$ implies that $x \in P$. Therefore $x \in N_{\lambda}(F_x \cap N_{\lambda \delta}(F_y))$, and in particular, $x \in N_{\lambda+\lambda\delta}(F_y)$. Let $y' \in F_y$ such that $d_X(x,y')\leq \lambda+\lambda \delta$. The projections are $1$-Lipschitz, so for any $i$, $d_i(\pi_i(x),\pi_i(y'))\leq \lambda+\lambda\delta$. $y$ and $y'$ lie in a same flat, so by step $1$
$$d_X(y,y') \leq \alpha \sum_{i=1}^n d_i(\pi_i(y),\pi_i(y')). $$
Therefore
\begin{equation}\label{quasi-inj in ss}
    \begin{split}
        d_X(x,y) &\leq d_X(x,y')+d_X(y',y)
        \\ &\leq \lambda+\lambda\delta +  \alpha \sum_{i=1}^n d_i(\pi_i(y),\pi_i(y'))
        \\ &\leq \lambda+\lambda\delta +  \alpha \left( \sum_{i=1}^n d_i(\pi_i(y),\pi_i(x)) + \sum_{i=1}^n d_i(\pi_i(x),\pi_i(y'))  \right)
        \\ &\leq \lambda+\lambda\delta +  \alpha \left( 0 + n (\lambda+\lambda\delta)  \right)
        \\&\leq (1+\alpha n)(\lambda+\lambda\delta).
    \end{split}
\end{equation}
Note that $D = (1+\alpha n)(\lambda+\lambda\delta)$ depends only on $X$.

\medskip \noindent
• \textit{Substep 2: $\pi$ is a bi-Lipschitz (resp.\ quasi-isometric) embedding}

\medskip \noindent
Again, for the sake of clarity, let us treat the Euclidean buildings and symmetric spaces separately.

\medskip \noindent
\underline{$X$ is a Euclidean building:}
let $x,y \in X_{\Delta}$. If they lie in a same maximal-flat containing $\Delta$ at infinity, we are done by step $1$. If not, let $F_x$ and $F_y$ be maximal-flats such that $x \in F_x$, $y \in F_y$ and $\Delta \subset \bound F_x \cap \bound F_y$. $\pi_i([x,\xi_i))$ and $\pi_i([y,\xi_i))$ are not necessarily equal, but they still share a ray. Indeed, since $CS(s_i)$ is a Euclidean building of dimension $1$, i.e.\ a metric tree, $[x,\xi_i)$ and $[y,\xi_i)$ stay at bounded distance near $+ \infty$, and $\pi_i$ is 1-Lipschitz, then $\pi_i([x,\xi_i))$ and $\pi_i([y,\xi_i))$ share a ray. We have two cases, either one of $\pi_i([x,\xi_i))$, $\pi_i([y,\xi_i))$ is a subset of the other. Or there is a branching and they form a tripod. Let us start from $i=1$.

\medskip \noindent
If we are in the first case, i.e.\ if $\pi_1([y,\xi_1)) \subset \pi_1([x,\xi_1))$, we can start from $x$ and follow $[x,\xi_1)$ until we equalize $\pi_1$. In other words, $\exists x_1 \in [x,\xi_1)$ such that $\pi_1(x_1) = \pi_1(y)$. Since $ x_1 \in [x,\xi_1)$, $\forall i \ne 1$, $\pi_i(x_1) = \pi_i(x)$. And $x,x_1 \in [x,\xi_1) \subset F_x$ so $d_X(x,x_1)\leq \alpha d_1(\pi_1(x), \pi_1(x_1)) = \alpha d_1(\pi_1(x), \pi_1(y)) $. We denote by $y_1 = y$, and move to $i=2$ to equalize $\pi_2$. If $\pi_1([x,\xi_1)) \subset \pi_1([y,\xi_1))$, we take $y_1 \in [y,\xi_1) $ such that $\pi_1(y_1) = \pi_1(x)$, and denote $x_1 = x$.

\medskip \noindent
If we are in the second case and there was a branching, let $x_1 \in [x,\xi_1)$ and $y_1 \in [y,\xi_1)$ such that $\pi_1(x_1) = \pi_1(y_1)$ is the branching point. Since $x,x_1 \in [x,\xi_1) \subset F_x$  and $y,y_1 \in [y,\xi_1) \subset F_y$, we have
\begin{equation}\label{+}
\begin{split}
     d_X(x,x_1)\leq \alpha \, d_1(\pi_1(x), \pi_1(x_1)),
    \\ d_X(y,y_1)\leq \alpha \, d_1(\pi_1(y), \pi_1(y_1)).
\end{split}
\end{equation}
$\pi_1(x_1) = \pi_1(y_1)$ is the branching point, so $d_1(\pi_1(x), \pi_1(y))= d_1(\pi_1(x), \pi_1(x_1))+d_1(\pi_1(y_1), \pi_1(y))$ and \ref{+} implies that 
\begin{equation}\label{sum}
    d_X(x,x_1)+d_X(y,y_1) \leq \alpha \, d_1(\pi_1(x), \pi_1(y)).
\end{equation}
We repeat this process by starting from $x_1, y_1$ and we equalize $\pi_2$. We get at the end a path $x = x_0 \to x_1 \to \dots \to x_{n-1} \to x_{n} = y_n \to y_{n-1} \to \dots \to y_1 \to y_0 = y$. Note that $x_n = y_n$ by injectivity because we've equalized all $\pi_i$. By the triangle inequality and by \ref{sum}:
$$ d_X(x,y) \leq \sum_{i=1}^n d_X(x_{i-1},x_i) + d_X(y_{i-1},y_i) \leq \sum_{i=1}^n \alpha \, d_i(\pi_i(x),\pi_i(y)) .$$

\medskip \noindent
\underline{$X$ is a symmetric space:}
Let $x,y \in X_{\Delta}$. If they lie in a same maximal-flat containing $\Delta$ at infinity, we are done by step $1$. If not, let $F_x$ and $F_y$ be maximal-flats such that $x \in F_x$, $y \in F_y$ and $\Delta \subset \bound F_x \cap \bound F_y$. $\pi_1([x,\xi_1))$ and $\pi_1([y,\xi_1))$ are again geodesic rays with the same point at $+ \infty$ in $CS(s_1)$, which is a rank one symmetric space of non-compact type. The difference with the building case is that the rays no longer share a ray. To overcome this, we need the following lemma.
\begin{lem}\label{lem convergence of geodesics}
    Let $X$ be a symmetric space of non-compact type. For any regular point $\eta \in \bound X$, there exists $\delta >0$ such that if $\textup{Isom}_0(X) = KAN$ is an Iwasawa decomposition with respect to $\eta$, and $x,y \in X$ are in a same $N$-orbit, then
    $$      d_X\left( [x,\eta)(d), [y,\eta)(d) \right) \leq \delta,$$
    where $d=d_X(x,y).$
\end{lem}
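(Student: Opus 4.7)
My plan is to work in Iwasawa coordinates at some chosen basepoint $x_0$, first reducing to the case $x = x_0$, and then exploiting the exponential contraction of $\mathrm{Ad}(a_{-t})$ on $\mathfrak{n}$, which is strict in every restricted-root direction precisely because $\eta$ is regular.

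First I would set up the reduction. Fix $x_0$, let $G = KAN$ be the Iwasawa decomposition of $\textup{Isom}_0(X)$ with respect to $\eta$ and $x_0$ as in Theorem \ref{iwasawa}, and set $a_t := \exp(tY)$, where $Y \in \mathfrak{a}$ is the unit vector with $\gamma(t) = a_t\cdot x_0$ the geodesic from $x_0$ to $\eta$. Since $AN$ acts simply transitively on $X$ and stabilises $\eta$, writing $x = an\cdot x_0$ forces $y = n_0\cdot x = an'\cdot x_0$ with the same $a$, where $n' = (a^{-1} n_0 a) n \in N$ (using that $A$ normalises $N$). Because $an$ and $an'$ fix $\eta$, we have $[x,\eta)(t) = an a_t\cdot x_0$ and $[y,\eta)(t) = an' a_t\cdot x_0$, and applying the isometry $(ana_t)^{-1}$ yields
\[
d_X\bigl([x,\eta)(t),[y,\eta)(t)\bigr) = d_X(x_0, m_t\cdot x_0), \qquad d_X(x,y) = d_X(x_0, m\cdot x_0),
\]
with $m := n^{-1} n' \in N$ and $m_t := a_t^{-1} m a_t$. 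It therefore suffices to prove a uniform bound $d_X(x_0, m_d\cdot x_0)\le \delta$ in terms of $d = d_X(x_0, m\cdot x_0)$, valid for all $m \in N$.

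Next I would decompose $m = \exp V$ along the restricted-root spaces $\mathfrak{n} = \bigoplus_{\alpha \in \Phi,\ \alpha(Y)>0}\mathfrak{g}_\alpha$, writing $V = \sum_\alpha V_\alpha$. A direct computation gives $a_t^{-1} m a_t = \exp\bigl(\sum_\alpha e^{-t\alpha(Y)} V_\alpha\bigr)$, and regularity of $\eta$ forces $\alpha(Y) > 0$ for every $\alpha$ appearing. Introducing the scale
\[
N(V) := \max_{\alpha}\ \|V_\alpha\|^{1/\alpha(Y)},
\]
one checks that $N(\mathrm{Ad}(a_{-t})V) = e^{-t} N(V)$. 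The analytic input I would invoke is a uniform logarithmic distortion estimate for the $N$-orbit of $x_0$ inside $X$: there is a constant $C = C(X,\eta,x_0)$ such that
\[
d_X(x_0, \exp V\cdot x_0) \ge \log N(V) - C \qquad \text{for all } V \in \mathfrak{n}.
\]
Geometrically this says that the shortest path from $x_0$ to $\exp V\cdot x_0$ must flow at least $\log N(V)$ towards $\eta$ to bring the $N$-part to bounded size, with a comparable return trip; algebraically, it comes from estimating the $A$-part of the Iwasawa or Cartan projection of $\exp V$ via the weights of $\mathrm{ad}(Y)$ on $\mathfrak{n}$.

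Combining the two steps gives $N(V) \le e^{d+C}$, hence $N(\mathrm{Ad}(a_{-d})V) = e^{-d}N(V) \le e^{C}$. The sublevel set $\{W \in \mathfrak{n} : N(W) \le e^{C}\}$ is bounded (hence relatively compact) in $\mathfrak{n}$, and the continuous map $W \mapsto d_X(x_0, \exp W\cdot x_0)$ vanishes at $W = 0$, so it is bounded on this set by some $\delta < \infty$. This yields $d_X(x_0, m_d\cdot x_0) \le \delta$, as required. The main obstacle is the uniform logarithmic distortion estimate above, which must hold for all $V \in \mathfrak{n}$ (not just those with large $N(V)$); everything else is a formal consequence of the Iwasawa decomposition together with the strict positivity of $\alpha(Y)$.
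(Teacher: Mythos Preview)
The paper does not prove this lemma; it simply refers to \cite[Lemma~4]{leuzinger2000corank}. Your reduction via the Iwasawa decomposition to the inequality $h(\mathrm{Ad}(a_{-d})V)\le\delta$, where $h(V):=d_X(x_0,\exp V\cdot x_0)$ and $d=h(V)$, is correct, and the homogeneous gauge $N(V)=\max_\alpha\|V_\alpha\|^{1/\alpha(Y)}$ with $N(\mathrm{Ad}(a_{-t})V)=e^{-t}N(V)$ is the right device.

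The issue is that the step you flag as ``the main obstacle'' --- the uniform estimate $h(V)\ge\log N(V)-C$ --- is not merely an auxiliary input: it is \emph{equivalent} to the lemma itself. You prove one implication; for the converse, if the lemma holds with constant $\delta$, then for every $V$ the vector $\mathrm{Ad}(a_{-h(V)})V$ lies in the compact set $\{W:h(W)\le\delta\}$, on which $N$ is bounded by some $M$, and homogeneity gives $e^{-h(V)}N(V)\le M$, i.e.\ $h(V)\ge\log N(V)-\log M$. So what you have written is a reformulation rather than a genuine reduction: convexity of $t\mapsto h(\mathrm{Ad}(a_{-t})V)$ together with the scaling identity does not by itself force the distortion bound (a convex function decreasing to $0$ can stay near its initial value for an arbitrarily long time).

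To close the gap one has to prove the distortion estimate from scratch. Your algebraic hint --- controlling the Cartan projection of $\exp V$ through the weights of $\mathrm{ad}(Y)$ --- is exactly the right route: one compares $d_X(x_0,\exp V\cdot x_0)=\|\kappa(\exp V)\|$ with $\log\|V_\alpha\|$ by passing to a finite-dimensional representation whose highest-weight vector detects the $\alpha$-component, which is essentially what the Leuzinger reference carries out. Without that (or an equivalent lower bound on the backward divergence rate of $\gamma_0$ and $\gamma_W$), the argument remains incomplete at precisely the point where the analytic content lives.
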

\noindent
For a proof, see for example the proof of \cite[Lemma 4]{leuzinger2000corank}. As an application, since in rank one the stabilizer of any point acts transitively on the boundary, $\delta$ does not depend on $\eta$. If $\eta \in \bound X$, we denote by $b_{\eta}$ a Busemann function with respect to $\eta$ and some base point in $X$. Note that if $x,y \in X$, $b_{\eta}(x) - b_{\eta}(y)$ does not depend on the basepoint. So, we have the following.
\begin{cor}[see \cref{quasi-path rank 1}]\label{cor quasi path in rank one}
    Let $X$ be a rank one symmetric space of non-compact type. There exists $\delta$ such that for any $\eta \in \bound X$ and for any $x,y \in X$ the following holds: 
    \\If $b_{\eta}(x)-b_{\eta}(y) \geq 0$, let $x' \in [x,\eta)$ such that $b_{\eta}(x') = b_{\eta}(y)$, let $x'' \in [x',\eta)$ such that $d_X(x',x'') = d_X(x',y)$, and $y'' \in [y,\eta)$ such that $d_X(y,y'') = d_X(x',y)$. Then the path
    $$x \xrightarrow[]{} x'' \xrightarrow[]{}y'' \xrightarrow[]{} y,$$
    where each arrow is a geodesic segment, has total length $\leq 3 \, d_X(x,y)+ \delta$.
\end{cor}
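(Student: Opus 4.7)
The plan is to use \cref{lem convergence of geodesics} to control the middle leg $d_X(x'', y'')$ of the piecewise geodesic path, and to bound the other two legs by $d_X(x,y)$ via standard properties of the Busemann function and negative curvature.

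First, I would observe that $x''$ lies on the ray $[x, \eta)$ (since $[x', \eta) \subset [x, \eta)$), so
$$ d_X(x, x'') = d_X(x, x') + d_X(x', x'') = d_X(x, x') + d_X(x', y). $$
Because the Busemann function decreases at unit rate along rays to $\eta$, we have
$$ b_\eta(x'') = b_\eta(x') - d_X(x', y) = b_\eta(y) - d_X(x', y) = b_\eta(y''), $$
so $x''$ and $y''$ lie on a common horosphere centered at $\eta$. In a rank one symmetric space, horospheres coincide with the orbits of the nilpotent factor $N_\eta$ in the Iwasawa decomposition $\textup{Isom}_0(X) = K A_\eta N_\eta$ with respect to $\eta$ (cf.\ \cref{iwasawa}), so $x''$ and $y''$ lie in a common $N_\eta$-orbit. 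Moreover, since in rank one the point stabilizer acts transitively on $\bound X$, the constant $\delta$ produced by \cref{lem convergence of geodesics} can be chosen independently of $\eta$.

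Next, I would apply \cref{lem convergence of geodesics} to the pair $(x', y)$, which lies on a common horosphere, with parameter $d = d_X(x', y)$. By construction $[x', \eta)(d) = x''$ and $[y, \eta)(d) = y''$, so the lemma yields $d_X(x'', y'') \leq \delta$. Combined with $d_X(y'', y) = d_X(x', y)$, the total length of the path is bounded by
$$ d_X(x,x'') + d_X(x'',y'') + d_X(y'',y) \leq d_X(x,x') + 2 d_X(x',y) + \delta. $$

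To conclude, it remains to show $d_X(x,x') \leq d_X(x,y)$ and $d_X(x',y) \leq d_X(x,y)$, which together give $d_X(x,x') + 2 d_X(x',y) \leq 3 d_X(x,y)$. The first is immediate since $d_X(x, x') = b_\eta(x) - b_\eta(y)$ and $b_\eta$ is $1$-Lipschitz. The second, which I expect to be the main difficulty, uses that rank one symmetric spaces of non-compact type are CAT$(-\kappa)$ for some $\kappa > 0$. Consider the convex function $\phi(t) = d_X([x,\eta)(t), y)$, so that $x' = [x, \eta)(t_0)$ with $t_0 = b_\eta(x) - b_\eta(y) \geq 0$. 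Strict convexity of horoballs in CAT$(-\kappa)$ implies that along the geodesic $[x', y]$, whose endpoints both have Busemann value $b_\eta(y)$, the Busemann function satisfies $b_\eta < b_\eta(y)$ on the open segment; consequently the initial direction of $[x', y]$ at $x'$ enters the open horoball $\{b_\eta < b_\eta(y)\}$, giving $\angle_{x'}(\eta, y) \leq \pi/2$. The first variation formula then gives $\phi'(t_0) = -\cos(\angle_{x'}(\eta, y)) \leq 0$, and convexity of $\phi$ places its minimum at some $t_\ast \geq t_0$, so $\phi(t_0) \leq \phi(0) = d_X(x, y)$. Combining these bounds completes the argument.
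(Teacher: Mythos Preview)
Your proof is correct and follows the same outline as the paper's: bound the total length by $d_X(x,x') + 2d_X(x',y) + \delta$, apply \cref{lem convergence of geodesics} to the pair $(x',y)$ for the middle segment, and show that both $d_X(x,x')$ and $d_X(x',y)$ are at most $d_X(x,y)$.

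The only real difference is in the inequality $d_X(x',y) \leq d_X(x,y)$, which you flag as the main difficulty and establish via strict convexity of horoballs in CAT$(-\kappa)$ together with the first variation formula. The paper instead observes that $x'$ is the nearest-point projection of $x$ onto the horoball $B = \{b_\eta \leq b_\eta(y)\}$, and that this projection $p$ is $1$-Lipschitz in any CAT$(0)$ space because $B$ is convex \cite[Chap.\ 2 Prop.\ 2.4]{bridson2013metric}; since $y \in B$ is fixed by $p$, one gets $d_X(x',y) = d_X(p(x),p(y)) \leq d_X(x,y)$ in one line. Your argument is valid, but the detour through CAT$(-\kappa)$, angles, and first variation is unnecessary: only CAT$(0)$ convexity is needed here.
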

\begin{figure}[!ht]
    \centering
    \def\svgwidth{0.3\textwidth}
    {
\begingroup%
  \makeatletter%
  \providecommand\color[2][]{%
    \errmessage{(Inkscape) Color is used for the text in Inkscape, but the package 'color.sty' is not loaded}%
    \renewcommand\color[2][]{}%
  }%
  \providecommand\transparent[1]{%
    \errmessage{(Inkscape) Transparency is used (non-zero) for the text in Inkscape, but the package 'transparent.sty' is not loaded}%
    \renewcommand\transparent[1]{}%
  }%
  \providecommand\rotatebox[2]{#2}%
  \newcommand*\fsize{\dimexpr\f@size pt\relax}%
  \newcommand*\lineheight[1]{\fontsize{\fsize}{#1\fsize}\selectfont}%
  \ifx\svgwidth\undefined%
    \setlength{\unitlength}{189.79579945bp}%
    \ifx\svgscale\undefined%
      \relax%
    \else%
      \setlength{\unitlength}{\unitlength * \real{\svgscale}}%
    \fi%
  \else%
    \setlength{\unitlength}{\svgwidth}%
  \fi%
  \global\let\svgwidth\undefined%
  \global\let\svgscale\undefined%
  \makeatother%
  \begin{picture}(1,1.0396844)%
    \lineheight{1}%
    \setlength\tabcolsep{0pt}%
    \put(0,0){\includegraphics[width=\unitlength,page=1]{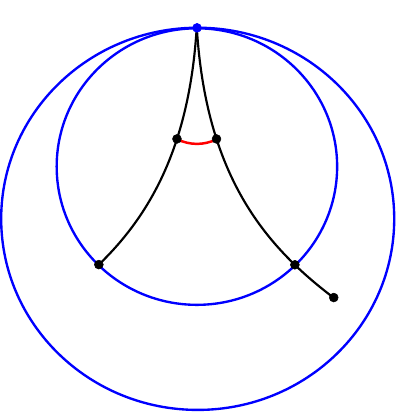}}%
    \put(0.51506161,1.01566599){\color[rgb]{0,0,1}\makebox(0,0)[lt]{\lineheight{1.25}\smash{\begin{tabular}[t]{l}$\eta$\end{tabular}}}}%
    \put(0.83418076,0.2209021){\color[rgb]{0,0,0}\makebox(0,0)[lt]{\lineheight{1.25}\smash{\begin{tabular}[t]{l}$x$\end{tabular}}}}%
    \put(0.77899483,0.35848413){\color[rgb]{0,0,0}\makebox(0,0)[lt]{\lineheight{1.25}\smash{\begin{tabular}[t]{l}$x'$\end{tabular}}}}%
    \put(0.2060019,0.30137443){\color[rgb]{0,0,0}\makebox(0,0)[lt]{\lineheight{1.25}\smash{\begin{tabular}[t]{l}$y$\end{tabular}}}}%
    \put(0.57612389,0.68255337){\color[rgb]{0,0,0}\makebox(0,0)[lt]{\lineheight{1.25}\smash{\begin{tabular}[t]{l}$x''$\end{tabular}}}}%
    \put(0.35351645,0.6832228){\color[rgb]{0,0,0}\makebox(0,0)[lt]{\lineheight{1.25}\smash{\begin{tabular}[t]{l}$y''$\end{tabular}}}}%
    \put(0.44907796,0.6208327){\color[rgb]{1,0,0}\makebox(0,0)[lt]{\lineheight{1.25}\smash{\begin{tabular}[t]{l}\tiny$\leq\delta$\end{tabular}}}}%
    \put(0,0){\includegraphics[width=\unitlength,page=2]{quasi_path_rank_1.pdf}}%
  \end{picture}%
\endgroup%
}
    \caption{The quasi-geodesic $x \xrightarrow[]{} x'' \xrightarrow[]{}y'' \xrightarrow[]{} y$.}
    \label{quasi-path rank 1}
\end{figure}
\begin{proof}
    Let $B$ be the horoball whose boundary horosphere contains $y$, and let $p : X \to B$ be the projection. $p(x) = x'$ so
    \begin{equation}\label{eq 1 cor}
        d_X(x,x') \leq d_X(x,y).
    \end{equation}
    $B$ is convex so $p$ is $1$-Lipschitz \cite[Chap.\ 2 Prop.\ 2.4]{bridson2013metric}, and we have
    \begin{equation}\label{eq 2 cor}
        d_X(x',y) = d_X(p(x),p(y)) \leq d_X(x,y).
    \end{equation}
    $b_{\eta}(x') = b_{\eta}(y)$ and $X$ is rank one so $x'$ and $y$ are in the same $N$-orbit for some Iwasawa decomposition. By \cref{lem convergence of geodesics}, $d_X(x'',y'')\leq \delta$, and the path $x' \xrightarrow[]{} x'' \xrightarrow[]{}y'' \xrightarrow[]{} y$ has length $\leq 2 d_X(x',y) + \delta$. We conclude by using \ref{eq 1 cor} and \ref{eq 2 cor} in $d_X(x,y) \leq d_X(x,x')+d_X(x',y)$.\qedhere
\end{proof}
\noindent
Let us go back to the proof. Let $\delta>0$ be as in \cref{cor quasi path in rank one}, that works for all the cross sections $CS(s_i)$, for $i=1,\dots,n$. Let us start from $i=1$.
\medskip \noindent
\\Let $x_1 \in [x,\xi_1)$ and $y_1 \in [y,\xi_1)$ such that, as in \cref{cor quasi path in rank one}, the path, in $CS(s_1)$, $\pi_1(x) \xrightarrow[]{} \pi_1(x_1) \xrightarrow[]{} \pi_1(y_1) \xrightarrow[]{} \pi_1(y)$ has length $\leq 3 \, d_1(\pi_1(x),\pi_1(y)) + \delta$.
$$d_1(\pi_1(x),\pi_1(x_1)) + d_1(\pi_1(x_1),\pi_1(y_1)) + d_1(\pi_1(y_1),\pi_1(y)) \leq 3 \, d_1(\pi_1(x),\pi_1(y)) + \delta.$$
$x,x_1 \in [x,\xi_1) \subset F_x$, and $y,y_1 \in [y,\xi_1) \subset F_y$, so
\begin{equation*}
\begin{split}
     d_X(x,x_1)\leq \alpha \, d_1(\pi_1(x), \pi_1(x_1)),
    \\ d_X(y,y_1)\leq \alpha \, d_1(\pi_1(y), \pi_1(y_1)).
\end{split}
\end{equation*}
Therefore,
\begin{equation}\label{sum ss}
    \begin{split}
        d_X(x,x_1)+d_X(y,y_1) &\leq \alpha  \left(  d_1(\pi_1(x), \pi_1(x_1)) + d_1(\pi_1(y), \pi_1(y_1)) \right)
        \\ & \leq 3 \, \alpha \, d_1(\pi_1(x),\pi_1(y)) + \alpha \delta.
    \end{split}
\end{equation}
We repeat this process by starting from $x_1, y_1$. We get at the end a path $x = x_0 \to x_1 \to \dots \to x_{n-1} \to x_{n} \to y_n \to y_{n-1} \to \dots \to y_1 \to y_0 = y$. Let us note that, unline in the building case, $x_n \ne y_n$. However, since $\pi(x_n) = \pi(y_n)$ for all $i$, by the quasi-injectivity in the previous step,
$$d_X(x_n,y_n) \leq D,$$
where $D$ is the constant in $\ref{quasi-inj in ss}$. By the triangle inequality and by \ref{sum ss}:
\begin{equation*}
    \begin{split}
        d_X(x,y) &\leq \sum_{i=1}^n \big( d_X(x_{i-1},x_i) + d_X(y_{i-1},y_i) \big) + d_X(x_n,y_n) 
        \\ &\leq \sum_{i=1}^n \big( 3 \, \alpha \, d_i(\pi_i(x),\pi_i(y)) + \alpha \delta \big) + D 
         \\ &\leq 3 \alpha \,  \sum_{i=1}^n \big( d_i(\pi_i(x),\pi_i(y))\big) + (n \alpha \delta + D).
    \end{split}
\end{equation*}

\medskip \noindent
\textbf{Step $3$: $\pi$ is surjective (resp.\ quasi-surjective):}

\medskip \noindent
By quasi-surjective, we mean that if $X$ is a symmetric space and if $(H_1',\dots, H_n') \in CS(s_1)\times \dots \times CS(s_n)$, there exists $x \in X_\Delta$ such that for any $i= 1, \dots,n$, $d_i(\pi_i(x),H_i')) \leq 1$.

\medskip \noindent
Let us start by the following observation.
\begin{lem}\label{parallel sets are in X_Delta}
   For any $i=1,\dots,n$, $P(s_i) \subset X_\Delta$.
\end{lem}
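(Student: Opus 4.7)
The plan is to exhibit, for each $x\in P(s_i)$, a concrete maximal-flat through $x$ whose boundary contains $\Delta$. I would start from the product splitting recalled in \cref{background},
$$P(s_i)=\RR^{\dim s_i+1}\times CS(s_i)=\RR^{n-1}\times CS(s_i),$$
where $s_i$ has dimension $n-2$ and $CS(s_i)$ is a rank-one CAT(0) space (a tree in the building case, a rank-one symmetric space otherwise). Since $s_i\subset\bound F_0$, the flat $F_0$ already lies in $P(s_i)$, and in the above decomposition it splits as $F_0=\RR^{n-1}\times\ell_0$ for some (full) geodesic $\ell_0\subset CS(s_i)$; correspondingly, $\bound F_0=s_i\ast\bound\ell_0$.

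Next I would locate $\xi_i$ in this spherical join. The proof of \cref{choice}(2) already notes that $\xi_i\notin s_i$, so since $\xi_i\in\bound F_0$, the point $\xi_i$ lies on a unique spherical geodesic joining some $\xi_i'\in s_i$ to one of the two endpoints $\alpha\in\bound\ell_0$. The direction $\alpha$ encodes the ``cross-section component'' of $\xi_i$ inside $P(s_i)$, and it is what we will re-use when moving the flat.

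Given any $x\in P(s_i)$, write $x=(h,c)$ with $h\in\RR^{n-1}$ and $c\in CS(s_i)$. Because $CS(s_i)$ is a complete CAT(0) space of rank one, the geodesic ray $[c,\alpha)$ extends to a (not necessarily unique, in the building case) bi-infinite geodesic $\ell\subset CS(s_i)$ through $c$ with $\alpha\in\bound\ell$. Then
$$F:=\RR^{n-1}\times\ell\subset P(s_i)$$
is a flat of dimension $n$ through $x$, hence a maximal-flat of $X$, with $\bound F=s_i\ast\bound\ell$. Since $\{\xi_j\}_{j\ne i}\subset s_i\subset\bound F$, and $\xi_i$ lies on the spherical segment from $\xi_i'\in s_i$ to $\alpha\in\bound\ell$, which is contained in the join $s_i\ast\bound\ell$, we conclude $\{\xi_1,\dots,\xi_n\}\subset\bound F$, and therefore $\Delta=\hull(\xi_1,\dots,\xi_n)\subset\bound F$. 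Thus $x\in F\subset X_\Delta$.

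The argument is essentially bookkeeping with the join structure of the parallel set; the only substantive ingredient is that maximal distribution forces $\xi_i\notin s_i$, which is what lets us extract a well-defined endpoint $\alpha\in\bound CS(s_i)$ from $\xi_i$ and then realize $\alpha$ as an ideal endpoint of a maximal-flat through any prescribed point of $P(s_i)$. I do not anticipate a genuine obstacle beyond this.
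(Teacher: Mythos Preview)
Your argument is correct and is essentially the paper's proof, phrased through the spherical join decomposition rather than the hemisphere language: your endpoint $\alpha\in\bound\ell_0$ is exactly the center $m$ of the singular hemisphere of $\bound F_0$ bounded by $s_i$ that contains $\xi_i$, and your $F=\RR^{n-1}\times\ell$ is the maximal-flat corresponding to a geodesic in $CS(s_i)$ through the given point with $m$ at infinity. The paper then concludes $\xi_i\in\bound F$ by convexity of $\bound F$ (it contains $s_i$ and $m$, hence the whole hemisphere $\sigma$), which is the same conclusion you reach via the join $s_i\ast\bound\ell$.
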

\begin{proof}
    Let $i \in \{1, \dots,n\}$, and let $H$ be the flat in $F_0$ with boundary $s_i$. Let $\sigma$ be the singular hemisphere of $\bound F_0$ bounded by $s_i$ and containing $\xi_i$, and let $m$ be its center. Therefore, $m \in \partial_{\infty} CS(s_i)$. 
    \\Now let $H' \in CS(s_i)$. Then any geodesic in $CS(s_i)$ that contains it and contains $m$ at infinity corresponds to a maximal-flat $F$ in $X$ whose boundary contains $s_i$ and $m$. By convexity, $\bound F$ contains $\sigma$ and therefore contains $\xi_i$. Recall that for any $j \ne i$, $\xi_j \in s_i \subset \bound F$. We conclude that $\Delta \subset \bound F$.
\end{proof}
\medskip \noindent
Let $x \in X_\Delta$, and $F$ be the flat that contains $x$ such that $\Delta \subset \bound F$. Let us denote $\pi_i(x) = H_i$ for any $i$. Let $H_1' \in CS(s_1)$. 
\noindent
\\Since $\pi_1(x) = H_1$, there exists $x_1 \in H_1$ such that $[x,\eta_1)$ is strongly asymptotic to $[x_1,\eta_1)$. As in \cref{parallel sets are in X_Delta}, let $m_1$ be the center of the singular hemisphere $\sigma_1$ bounded by $s_1$ and containing $\xi_1$. By considering, in $CS(s_1)$, two geodesic rays containing $H_1$ and $m_1$ at infinity (resp.\ $H_1'$ and $m_1'$), there exist two flats $F_1$ and $F_1'$ such that $H_1 \subset F_1$, $H_1' \subset F_1'$, and $\sigma_1 \subset \bound F_1 \cap \bound F_1'$. Moreover, since for any regular point $\mu$ in $\sigma_1$, $[x,\mu)$ is strongly asymptotic to $F_1'$, it also holds, by \cref{cor two flats strongly asymptot}, for any interior point of $\sigma_1$, and in particular for $\xi_1$. By \cref{asymptot to some geod}, there exists $x_1' \in F_1'$ such that $[x_1,\xi_1)$ is strongly asymptotic to $[x_1',\xi_1)$. $H_1'$ is transverve to $[x_1',\xi_1)$, so $x_1'$ can be taken in $H_1'$. Let us denote the opposites of $\xi_1$ and $m_1$ in $F_1'$ by $\xi_1'$ and $m_1'$. see \cref{figure surj in building} for the building case.
\begin{figure}[!ht]
    \centering
    \def\svgwidth{420pt}{
    \scriptsize
    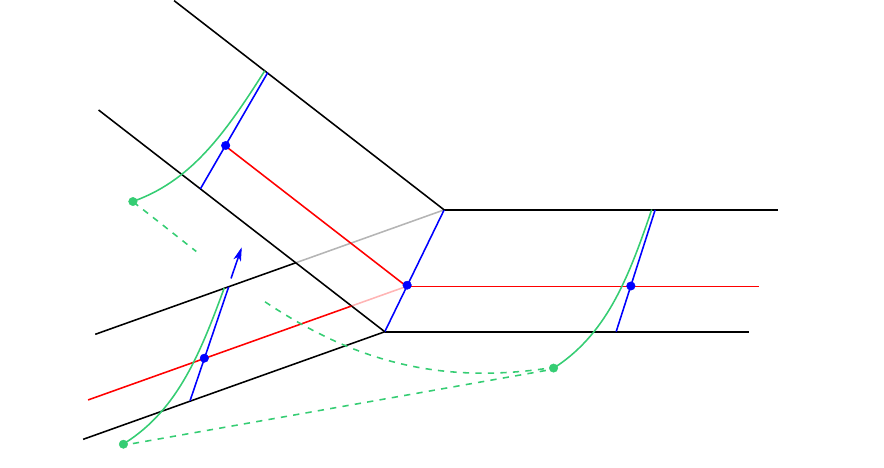}
    \caption{}
    \label{figure surj in building}
\end{figure}
\\For the rest of the proof, let us treat the building and symmetric space cases separately.

\medskip \noindent
\underline{$X$ is a Euclidean building:}
Let us show that there exists $z \in X_\Delta$ such that $\pi_1(z) = H_1'$ and for any $i \ne 1$ $\pi_i(z) = H_i$. By repeating the process, this completes the proof.
\begin{claim}\label{claim entering flat}
    $[x,\xi_1)$ enters in a maximal-flat $F'$ satisfying $\Delta \cup \{\xi_1'\} \subset \bound F'$.
\end{claim}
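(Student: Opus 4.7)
The plan is to take $F' := F_1'$, the maximal flat already constructed in the paragraph preceding the claim, and to verify the two required properties directly from the construction. The only nontrivial point will be showing that $[x,\xi_1)$ actually enters $F_1'$, for which I will leverage the fact that in a Euclidean building, strongly asymptotic rays with the same point at infinity must eventually coincide.

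First I would check that $\Delta \cup \{\xi_1'\} \subset \bound F_1'$. By construction the singular hemisphere $\sigma_1$ lies in $\bound F_1'$. Since $\xi_1 \in \sigma_1$ and $\{\xi_j\}_{j\neq 1} \subset s_1 = \partial\sigma_1 \subset \sigma_1$, all the vertices $\xi_1,\dots,\xi_n$ lie in $\sigma_1$; convexity of $\bound F_1'$ then gives $\Delta = \hull(\xi_1,\dots,\xi_n) \subset \sigma_1 \subset \bound F_1'$. Moreover $F_1'$ is a maximal flat, so the opposite $\xi_1'$ of $\xi_1$ in $F_1'$ lies in $\bound F_1'$ as well.

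Next I would show that $[x,\xi_1)$ enters $F_1'$. The preliminary observation is that one may take $x_1=x$ in the construction: by Proposition~\ref{fibers} applied inside $F$, the fiber of $\pi_1$ through $x$ is an $(n-1)$-flat $\tilde H \subset F$ with $\bound \tilde H = s_1$. Since $[x,\eta_1)\subset \tilde H$, the ray $[x,\eta_1)$ is trivially strongly asymptotic to $\tilde H$, so by the uniqueness statement of Corollary~\ref{uniqueness projection} we get $\tilde H = H_1$, and in particular $x \in H_1$. Hence $[x,\xi_1) = [x_1,\xi_1)$ is strongly asymptotic to $[x_1',\xi_1) \subset F_1'$. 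At this point the key input is that in a Euclidean building two strongly asymptotic geodesic rays with the same ideal endpoint eventually coincide: by the building axioms they eventually lie in a common apartment, inside which they become parallel Euclidean lines at constant distance, and strong asymptoticity forces that constant to be zero. Applying this to $[x,\xi_1)$ and $[x_1',\xi_1)$ yields a $T\geq 0$ with $[x,\xi_1)(t) \in [x_1',\xi_1) \subset F_1'$ for all $t\geq T$, completing the proof.

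The main obstacle is the last step, namely justifying the merging of strongly asymptotic rays; once that is in hand (it can also be extracted from the angle-rigidity arguments used in Lemma~\ref{entering parallel}), the rest is formal bookkeeping about the boundary of $F_1'$ and an immediate consequence of Proposition~\ref{fibers}.
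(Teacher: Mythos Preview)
There is a genuine gap. Your choice $F'=F_1'$ does not work, because $[x,\xi_1)$ need not enter $F_1'$ at all.

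The error is in the sentence ``the fiber of $\pi_1$ through $x$ is an $(n-1)$-flat $\tilde H\subset F$ with $\bound\tilde H=s_1$.'' Proposition~\ref{fibers} only says that the fiber $\tilde H$ has $\{\xi_j\}_{j\neq 1}$ at infinity; its boundary $\bound\tilde H$ is the wall in $\bound F$ spanned by these vertices, not the wall $s_1$ in $\bound F_0$ spanned by the same vertices. These two walls share the simplex $\hull(\{\xi_j\}_{j\neq 1})$ but are in general distinct subspheres of $\bound X$: already in rank~$2$, the ``wall through $\xi_2$'' in an apartment is the pair $\{\xi_2,\hat\xi_2\}$, and the antipode $\hat\xi_2$ depends on the apartment. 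Since $\tilde H$ need not have boundary $s_1$, Corollary~\ref{uniqueness projection} does not apply and you cannot conclude $\tilde H=H_1$; in particular $x\notin H_1$ in general, so $x_1\neq x$ and $[x,\xi_1)$ is not the ray shown to be strongly asymptotic to $[x_1',\xi_1)$.

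To see that the conclusion itself fails, take $X=T\times T'$ a product of two trees, $F_0=\ell_1\times\ell_2$, and $F=\ell_1\times\ell_2''$ where $\ell_2''$ shares the end $\xi_2$ with $\ell_2$ but branches off. For $x=(a,b)\in F$ with $b\in\ell_2''\setminus\ell_2$ one has $H_1=\{a\}\times\ell_2$, so $x\notin H_1$; and $F_1'=\ell_1^*\times\ell_2$, while $[x,\xi_1)\subset T\times\{b\}$ never meets $F_1'$ since $b\notin\ell_2$. The paper's proof avoids this by working in $CS(s)$ with $s:=\bound\tilde H$ (the wall in $\bound F$, not $s_1$): it uses that $CS(s)$ and $CS(s_1)$ share the same boundary points $m_1,m_1'$, and finds $F'$ as the maximal flat corresponding to a bi-infinite geodesic in $CS(s)$ joining $m_1$ and $m_1'$. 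In the example this produces $F'=\ell_1^*\times\ell_2''$, which does contain a tail of $[x,\xi_1)$ and has $\Delta\cup\{\xi_1'\}$ at infinity.
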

\begin{proof}[Proof of the claim]
    Let $H$ be the $(n-1)$-flat in $F$ satisfying $\pi_1(H) = \pi_1(x)$ (see \cref{fibers}), and let $s = \bound H$. Note that $\xi_2,\dots,\xi_n \in s$, therefore $s_1$ and $s$ both contain the top-dimensional cell that contains $\eta_1$. By \cite[Lemma 3.5]{leebcharac}, $CS(s)$ and $CS(s_1)$ have the same boundary, in particular $m_1,m_1' \in \partial CS(s)$. Since $\pi_1([x,\xi_1))$ is a parametrization of the geodesic ray $[\pi_1(x),m_1)$ in $CS(s)$, it enters in the geodesic joining $m_1'$ and $m_1$. Such a geodesic in $CS(s)$ corresponds to a maximal-flat $F'$ that contains $s$,$m_1$, and $m_1'$ in its boundary. By convexity, as in \cref{parallel sets are in X_Delta}, $\xi_1,\xi_1' \in \bound F'$.
\end{proof}
\noindent
Since $\pi_1(x) = \pi_1(x_1)$, it follows that $\pi_1([x,\xi_1)) = \pi_1([x_1,\xi_1))$. Let $p$ be the branching point of $[x_1,\xi_1)$ and $[x_1',\xi_1)$. There exists $y\in [x_1,\xi_1)$ such that $\pi_1(y) \in \pi_1([p,\xi_1))$, and $y$ is contained in a flat $F'$ that satisfies \cref{claim entering flat}. 

\medskip \noindent
Let us denote $\pi_1(y) = H_1''$, and let $x_1'' \in H_1'' \cap [x_1,\xi_1)$ such that $[y,\eta_1)$ is strongly asymptotic to $[x_1'',\eta_1)$. Now we will move backwards in $F'$ towards $\xi_1'$. $\pi_1(y) = \pi_1(x_1'')$, so $\pi_1([y,\xi_1')) = \pi_1([x_1'',\xi_1'))$. Since $x_1' \in [x_1'',\xi_1')$, there exists $z\in [y,\xi_1')$ such that $\pi_1(z) = \pi_1(x_1') = H_1'$, see \cref{figure surj in building}. $y \in F'$ and $\xi_1' \in \bound F_1'$ so $z \in F' \subset X_\Delta$. Moreover, in the path $x \xrightarrow[]{} y \xrightarrow[]{} z$, we followed geodesics pointing towards $\xi_1$ so $\pi_2, \dots, \pi_n$ are constant along the path.

\medskip \noindent
\underline{$X$ is a symmetric space:} we will show that for any $\varepsilon>0$, there exists $z \in X_\Delta$ such that $d_1(\pi_1(z),H_1') \leq \varepsilon$, and for any $i \ne 1$, $d_i(\pi_i(z),H_i) \leq \varepsilon$.

\medskip \noindent
We have the same setting except that $F_1$ and $F_1'$ do not share a singular half-space (if they do then they are equal). As in \cref{claim entering flat}, there exists a maximal-flat $F'$ such that $\Delta \cup \{\xi_1'\} \subset \bound F'$, and to which $[x,\xi_1)$ is strongly asymptotic. The proof is similar. As in the building case, $\pi_1([x,\xi_1)) = \pi_1([x_1,\xi_1))$. Note also that $[x_1,\xi_1)$ and $[x_1',\xi_1)$ are strongly asymptotic, so their images by $\pi_1$ are also strongly asymptotic ($\pi_1$ is $1$-Lipschitz). In particular $d\big(\pi_1([x,\xi_1)(t)),\pi_1([x_1',\xi_1))\big) \xrightarrow[+\infty]{}0$. To sum up
\begin{equation*}
    \begin{split}
            &[x,\xi_1) \textup{ is strongly asymptotic to } F',
           \\&\pi_1([x,\xi_1)) \textup{ is strongly asymptotic to } \pi_1([x_1',\xi_1)).
    \end{split}
\end{equation*}
So there exists $y \in [x,\xi_1)$ such that $y \in N_\varepsilon(F')$ and $d_1(\pi_1(y),\pi_1([x_1',\xi_1))) \leq \varepsilon$. Let $y' \in F'$ such that $d_X(y,y') \leq \varepsilon$, and let $x_1'' \in [x_1',\xi_1)$ such that $d_1(\pi_1(y),\pi_1(x_1'')) \leq \varepsilon$. So $d_1(\pi_1(y'),\pi_1(x_1'')) \leq 2 \varepsilon$.

\medskip \noindent
Now that $y' \in F'$ which is in $X_\Delta$ and contains $\xi_1'$ in its boundary, we can move towards it while staying in $X_\Delta$. $d_1(\pi_1(y'),\pi_1(x_1'')) \leq 2 \varepsilon$, so the geodesic rays $\pi_1([y',\xi_1'))$ and $\pi_1([x_1'',\xi_1'))$, in $CS(s_1)$, are at Hausdorff distance $\leq 2 \varepsilon$. $H_1' \in \pi_1([x_1'',\xi_1'))$ so there exists $z \in [y',\xi_1')$ such that $d_1(\pi_1(z),H_1') \leq 2 \varepsilon$. 

\medskip \noindent
Note that in both paths $x \xrightarrow[]{} y$ and $y' \xrightarrow[]{} z$, only $\pi_1$ changes so for any $i \ne 1$, $\pi_i(z) = \pi_i(y')$ which is at distance $\leq \varepsilon$ from $\pi_i(y) = \pi_i(x) = H_i$. So $z$ satisfies
\begin{equation*}
    \begin{split}
            &d_1(\pi_1(z),H_1') \leq 2 \varepsilon,
           \\&d_1(\pi_i(z),H_i) \leq \varepsilon, \textup{ for any } i \ne 1.
    \end{split}
\end{equation*}
By repeating the process for $i \ne 1$, we have shown that: for any $\varepsilon>0$, if $H_i \in CS(s_i)$ for $i = 1, \dots,n$, there exists $x' \in X_\Delta$ such that for any $i$, $d_i(\pi_i(x'),H_i)) \leq 2n \varepsilon$. By taking $\varepsilon$ small enough, this completes the proof of the quasi-surjectivity.
\begin{rem}
    Let us note that $\pi$ is also surjective for symmetric spaces, but the proof is tedious and the quasi-surjectivity is enough for our purpose.
\end{rem}
\begin{rem}
    Note also that the maximal distribution of the vertices $\xi_1,\dots, \xi_n$ is not needed for the surjectivity of $\pi$. We only needed that for any $i$, $\xi_i \not\in s_i$ and $\xi_i \in s_j$ for any $j \ne i$, so that one can move in the direction of $\xi_i$ and only changing $\pi_i$.
\end{rem}

\medskip \noindent
\textbf{Step $4$: $X_{\Delta} \to X$ is a bi-Lipschitz (resp.\ quasi-isometric) embedding:}

\medskip \noindent
Let us threat the two cases separately.

\medskip \noindent
\underline{$X$ is a Euclidean building:} let us prove a stronger result, from which the proof immediately follows.
\begin{prop}\label{path in union of flats}
    Let $\eta$ be an interior point of a chamber $C$ in $\bound X$. There exists a constant $\lambda>0$ such that for any maximal-flats $F_1$ and $F_2$ such that $\eta \in \bound F_1 \cap \bound F_2$, the following holds:
    \\ If $x \in F_1$ and $y \in F_2$, then there exists a path in $F_1 \cup F_2$ from $x$ to $y$ of length $\leq \lambda d_X(x,y)$.
\end{prop}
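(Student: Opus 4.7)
The plan is to join $x \in F_1$ and $y \in F_2$ by a detour through the convex intersection $F_1 \cap F_2$, travelling along the rays $[x,\eta) \subset F_1$ and $[y,\eta) \subset F_2$. Both rays must eventually enter $F_1 \cap F_2$ because $\eta$ lies in the boundary at infinity of both apartments (and hence of their intersection), so the resulting path $x \to x^{\ast} \to y^{\ast} \to y$, with $x^{\ast}, y^{\ast} \in F_1 \cap F_2$ chosen on these two rays, lies in $F_1 \cup F_2$.

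The first quantitative input is an angle lower bound: since $\eta$ is an interior point of the chamber $C$, and the spherical Coxeter complex on $\bound F_1$ has finitely many orbits of walls under its Weyl group, there exists a constant $\theta_0 = \theta_0(\eta) > 0$ such that $\angle(\eta, s) \geq \theta_0$ for every singular wall $s \subset \bound F_1$ not containing $\eta$. A purely Euclidean calculation inside the apartment $F_1$, using that $F_1 \cap F_2$ is an intersection of singular half-spaces of $F_1$ whose bounding walls either contain $\eta$ at infinity (in which case $[x,\eta)$ runs parallel to them and $x$ is already on the correct side) or meet $\eta$ at angle $\geq \theta_0$, yields the entry-time estimate
\[
T_x \;\leq\; \frac{d_{F_1}(x, F_1 \cap F_2)}{\sin \theta_0},
\]
and symmetrically $T_y \leq d_{F_2}(y, F_1 \cap F_2)/\sin \theta_0$ for the entry of $[y,\eta)$ into $F_1 \cap F_2$ inside $F_2$.

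The second and more delicate input is the comparison $d_{F_1}(x, F_1 \cap F_2) \leq d_X(x,y)$ (and the symmetric estimate for $y$). I would obtain it from the identity $d_X(x, F_2) = d_{F_1}(x, F_1 \cap F_2)$. The inequality $d_X(x, F_2) \leq d_{F_1}(x, F_1 \cap F_2)$ is immediate from $F_1 \cap F_2 \subset F_2$; the reverse inequality uses the Euclidean building structure via the retraction $\rho = \rho_{F_1, C}\colon X \to F_1$ based at the chamber $C$ at infinity, which is $1$-Lipschitz, restricts to an isometry on $F_2$, and pointwise fixes $F_1 \cap F_2$. A comparison of the projections $p_{F_2}(x)$ and $p_{F_1 \cap F_2}(x) \in F_1 \cap F_2 \subset F_2$ through $\rho$ shows that the former already lies in $F_1 \cap F_2$, giving the identity. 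Combining it with $d_X(x, F_2) \leq d_X(x, y)$ (since $y \in F_2$) yields the required bound.

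Setting $t := d_X(x, y)/\sin \theta_0 \geq \max(T_x, T_y)$, the concatenation of the Euclidean segment $[x, x^{\ast}]$ in $F_1$ (length $t$), the segment $[x^{\ast}, y^{\ast}]$ inside the convex set $F_1 \cap F_2 \subset F_1$, and the Euclidean segment $[y^{\ast}, y]$ in $F_2$ (length $t$) lies in $F_1 \cup F_2$. Since the CAT(0) distance between two geodesic rays sharing an endpoint at infinity is a bounded convex function of the parameter and therefore non-increasing, we have $d(x^{\ast}, y^{\ast}) \leq d_X(x, y)$, so the total length is at most
\[
2t + d_X(x,y) \;=\; \left(\frac{2}{\sin \theta_0} + 1\right) d_X(x, y),
\]
and we may take $\lambda = 2/\sin \theta_0 + 1$, a constant depending only on $X$ and $\eta$. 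The main obstacle is the retraction-based identity $d_X(x, F_2) = d_{F_1}(x, F_1 \cap F_2)$: this is where the building axioms enter in an essential way, as the corresponding statement would fail for a purely CAT(0) union of two copies of Euclidean space glued along a non-half-space convex set.
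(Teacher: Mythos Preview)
Your overall strategy---travel along $[x,\eta)$ and $[y,\eta)$ until both rays sit inside $F_1\cap F_2$, then jump across---is sound and different from the paper's route. The entry-time estimate $T_x\le d_{F_1}(x,F_1\cap F_2)/\sin\theta_0$ is correct (since $F_1\cap F_2=\bigcap_i M_i$ with each bounding wall making angle $\ge\theta_0$ with $\eta$, and $d(x,M_i)\le d(x,F_1\cap F_2)$), and the final length count is fine. The genuine gap is exactly where you flag it: the identity $d_X(x,F_2)=d_{F_1}(x,F_1\cap F_2)$. Your retraction sketch does not close. The sector retraction $\rho=\rho_{F_1,C}$ is $1$-Lipschitz and restricts to an isometry $F_2\to F_1$ fixing $F_1\cap F_2$, but for $z=p_{F_2}(x)$ the inequality $d_X(x,z)\ge d_{F_1}(x,\rho(z))$ only tells you $\rho(z)\in F_1$; it places no constraint forcing $\rho(z)$ (or $z$) into $F_1\cap F_2$. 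Indeed $\rho(z)$ can equal $x$ itself (take the tree case: $F_1\cap F_2=[0,\infty)$, $x=-1\in F_1$, $z=-1'\in F_2$, $\rho(z)=-1=x$), so the bound collapses to $d_X(x,z)\ge 0$. The identity you want is true, but it amounts to showing that every geodesic from a point of $F_1$ to a point of $F_2$ meets $F_1\cap F_2$, which is a genuine statement about Euclidean buildings requiring a separate argument (for example by induction on rank through links, or via a careful analysis of the apartment containing $[x,z]$); it is not a formal consequence of the $1$-Lipschitz property of $\rho$.

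The paper sidesteps this entirely. Rather than aiming for $F_1\cap F_2$, it picks $y_1\in F_2$ on the same horosphere as $y$ with $[x,\eta)$ strongly asymptotic to $[y_1,\eta)$, goes from $x$ along $[x,\eta)$ to the branching point $z$ of these two rays, then from $z$ to $y_1$ along $[y_1,\eta)$, and finally from $y_1$ to $y$ inside $F_2$. The only quantitative input is an angle bound at the branch point (their Lemma immediately preceding the proposition), which gives $d(x',z)=d(y_1,z)\le \beta\, d(x',y_1)$ with $\beta$ depending only on $d_T(\eta,\partial C)$; the remaining pieces are controlled by $1$-Lipschitz horoball projections. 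This avoids ever comparing $d_X(x,F_2)$ with $d_{F_1}(x,F_1\cap F_2)$. Your approach would give a comparable constant once the identity is properly established, and has the conceptual advantage of making the role of $F_1\cap F_2$ explicit; the paper's approach is more self-contained because it never needs to locate that intersection.
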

\noindent
Let us first prove the following lemma, which is the equivalent of \cref{lem convergence of geodesics} for buildings.
\begin{lem}\label{lem cv geodesics in building}
    Let $\eta$ be an interior point of a chamber $C$ in $\bound X$. There exists $\beta \geq 0$ such that the following holds:
    \\ For any $x,y \in X$, if $[x,\eta)$ is strongly asymptotic to $[y,\eta)$ and $b_{\eta}(x) = b_{\eta}(y)$, then
    $$       [x,\eta)(\beta  d) = [y,\eta)(\beta  d),$$
    where $d = d_X(x,y)$. In other words, the branching point of $[x,\eta)$ and $[y,\eta)$ is at distance $\leq \beta d_X(x,y)$ from $x$ and $y$.
\end{lem}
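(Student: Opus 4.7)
The plan is to establish a finite merging point $p = [x,\eta)(T) = [y,\eta)(T)$ for the two rays and then to bound $T$ linearly in $d = d_X(x,y)$ with a constant depending only on the chamber $C$.

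For the existence of $p$, I would argue as in the proof of \cref{entering parallel}: by the angle rigidity axiom of Euclidean buildings, the angle at $[x,\eta)(t)$ between the direction along the ray and the direction toward the nearest point of $[y,\eta)$ takes only finitely many values, and strong asymptoticity forces this angle to tend to zero, so it must become zero past some finite $t$. This yields a first meeting point $p \in [x,\eta) \cap [y,\eta)$ such that the two rays coincide beyond $p$. The equality of parameters $d_X(x,p) = d_X(y,p) =: T$ then follows from the affineness of $b_\eta$ along rays to $\eta$ combined with the hypothesis $b_\eta(x) = b_\eta(y)$, giving $p = [x,\eta)(T) = [y,\eta)(T)$.

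For the main estimate $T \leq \beta d$, I would pick apartments $A \supset \{x\}\cup[x,\eta)$ and $A' \supset \{y\}\cup[y,\eta)$; both contain the chamber $C$ at infinity, so $A \cap A'$ is a convex Weyl polyhedron containing $[p,\eta)$ together with a full sub-sector toward $C$. The rays $[x,\eta)$ and $[y,\eta)$ enter $A \cap A'$ at first crossing points $q$ and $q'$ respectively, each lying on a singular wall of the building. Inside the shared Euclidean piece $A \cap A'$, the sub-rays $[q,\eta)$ and $[q',\eta)$ are rays to the regular direction $\eta$, hence parallel; since both pass through $p$ they must lie on the same Euclidean line, and $p$ is whichever of $q, q'$ is closer to $\eta$. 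Without loss of generality $p = q'$, so $T = d_X(y, q')$.

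Inside the Euclidean apartment $A'$, elementary plane geometry then gives
\begin{equation*}
T = d_X(y, q') = \frac{d_X(y, H_y)}{\sin \theta_{H_y}},
\end{equation*}
where $H_y$ is the wall of $A'$ crossed by $[y,\eta)$ at $q'$ and $\theta_{H_y}$ is the angular distance in $\bound A'$ from $\eta$ to the singular sphere $\bound H_y$. Because $\eta$ is interior to the chamber $C$, this angular distance is bounded below by a positive constant $\theta_{\min}(C)$ depending only on $C$ and the spherical Weyl group. The perpendicular distance $d_X(y, H_y)$ is in turn bounded by $d$ through a book-folding argument along $H_y$: reflecting $A'$ across $H_y$ and identifying the shared half with $A$ produces a single Euclidean apartment in which the geodesic $[x,y]$ becomes a straight segment whose length dominates $d_X(y, H_y)$. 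Combining these estimates gives $T \leq d / \sin \theta_{\min}(C)$, and the lemma holds with $\beta = 1/\sin \theta_{\min}(C)$. The main obstacle I expect is the subcase where $[x,\eta)$ and $[y,\eta)$ cross distinct walls of $A \cap A'$, which complicates the bound $d_X(y, H_y) \leq d$; this should be handled either by iterating the one-wall argument across the facets of $A \cap A'$, or by invoking the retraction $\rho : X \to F_0$ based at $C$ (which satisfies $\rho(x) = \rho(y)$ by Busemann equality) to reduce to the case of a common wall.
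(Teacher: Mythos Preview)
Your outline captures the right geometry—the branching point exists, it lies on a wall of any apartment through $y$, and the angular distance from $\eta$ to that wall is bounded below by $d_T(\eta,\partial C)>0$—but the route you take to the linear bound is considerably more involved than necessary, and the step you flag as an obstacle is a genuine gap.

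The paper bypasses apartments and wall-crossings entirely. Once the branching point $z$ is fixed (with $d_X(x,z)=d_X(y,z)=T$ from $b_\eta(x)=b_\eta(y)$), it simply applies the CAT(0) comparison inequality to the geodesic triangle with vertices $x,y,z$:
\[
d_X(x,y)^2 \;\ge\; d_X(x,z)^2+d_X(y,z)^2-2\,d_X(x,z)\,d_X(y,z)\cos\bigl(\angle_z(x,y)\bigr)\;=\;2T^2\bigl(1-\cos\angle_z(x,y)\bigr).
\]
The only remaining input is a uniform lower bound on the angle at $z$. Since $z$ is exactly where $[x,\eta)$ enters an apartment containing $y$ through some wall $H$, one gets $\angle_z(x,y)=2\,d_T(\eta,\bound H)\ge 2\,d_T(\eta,\partial C)=:\theta>0$, and hence $T\le d_X(x,y)/\sqrt{2(1-\cos\theta)}$. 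No book-folding, no analysis of the Weyl polyhedron $A\cap A'$, no case distinction on how many walls are crossed.

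Regarding your argument: the reduction to $p=q'$ is fine (your parallel-rays reasoning in $A\cap A'$ does force one entry point to coincide with the branching point). The trouble is the bound $d_X(y,H_y)\le d$. Your folding produces an apartment $A''$ sharing with $A'$ the half bounded by $H_y$, but $x$ need not lie in $A''$ unless $A\cap A'$ is \emph{exactly} that half-apartment. When $A\cap A'$ is a smaller polyhedron bounded by several walls, the reflected apartment and $A$ diverge before reaching $x$, so the ``straight segment $[x,y]$ in $A''$'' is not available. Your proposed fixes (iterating across facets, or invoking the retraction $\rho$ based at $C$) are not immediate either: note for instance that $\rho(x)=y$ here, since $\rho$ preserves directions to $\eta$ and Busemann levels, so the retraction by itself yields no separation from $H_y$. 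In short, your Euclidean identity $T=d_X(y,H_y)/\sin\theta_{H_y}$ is the shadow of the paper's angle estimate $\angle_z(x,y)\ge\theta$, but recovering $d_X(y,H_y)\le d$ without the CAT(0) law of cosines is doing things the hard way.
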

\begin{proof}
    Let $z$ be the branching point of $[x,\eta)$ and $[y,\eta)$. Note that  $b_{\eta}(x) = b_{\eta}(y)$ implies that $d_X(x,z) = d_X(y,z)$. Note also that, since $\eta$ is a regular point, $z$ is the entering point of $[x,\eta)$ in the cone $[y,C)$. Let $H$ be the wall in $F_2$ by which $[x,\eta)$ enters in $[y,C)$. Then 
    $$\angle_z(x,y) = 2 d_T(\eta, \bound H).$$ 
    Since $\eta$ is an interior point of $C$, 
    $$d_T(\eta, \bound H) \geq d_T(\eta,\partial C).$$
    Let $\theta = 2\, d_T(\eta,\partial C)$, which does not depend on $x$ and $y$. By considering the geodesic triangle $[x,y]$,$[y,z]$,$[z,x]$, (see \cite[Chap.2 Ex 1.9]{bridson2013metric})
    \begin{equation*}
        \begin{split}
            d_X(x,y)^2 &\geq d_X(x,z)^2+d_X(y,z)^2-2d_X(x,z)d_X(y,z) \cos(\angle_z(x,y))
                \\& \geq 2(1-\cos(\theta)) d_X(x,z)^2.
        \end{split}
    \end{equation*}
    Therefore,
    $$   d_X(x,z) \leq \frac{1}{\sqrt{2(1-\cos(\theta))}} \, d_X(x,y),  $$
    and the path $x \xrightarrow[]{} z \xrightarrow[]{} y$ has length $\leq \frac{2}{\sqrt{2(1-\cos(\theta))}} \, d_X(x,y).$ \qedhere
\end{proof}
\begin{proof}[Proof of \cref{path in union of flats}]
    Without loss of generality, suppose $b_{\eta}(x) - b_{\eta}(y) \geq 0$. 

    \medskip \noindent
    • If $[x,\eta)$ is strongly asymptotic to $[y,\eta)$, let $x' \in [x,\eta)$ such that $b_{\eta}(x') = b_{\eta}(y)$, and let $z$ be the branching point. By \cref{lem cv geodesics in building}, 
    $$d_X(x',z) = d_X(y,z) \leq \beta \, d_X(x',y).$$
    $x'$ is the projection of $x$ onto the horoball, centered at $\eta$, and whose boundary horocycle contains $x'$ and $y$, so
    $$d_X(x,x') \leq d_X(x,y).$$
    This projection is $1$-Lipschitz, so
    $$d_X(x',y) \leq d_X(x,y).$$
    Therefore, the path $x \xrightarrow[]{} x' \xrightarrow[]{} z \xrightarrow[]{} y$ has length 
    $$d_X(x,x')+d_X(x',z)+d_X(z,y) \leq d_X(x,y) + 2 \beta d_X(x',y) \leq (2 \beta + 1) d_X(x,y).$$
    \noindent
    \\• If not, let $y_1 \in F_2$ such that $[x,\eta)$ is strongly asymptotic to $[y_1,\eta)$ and $b_{\eta}(y_1) = b_{\eta}(y)$. Let $z$ be the branching point of $[x,\eta)$ and $[y_1,\eta)$, and $z' \in [y,\eta)$ such that $b_{\eta}(z') = b_{\eta}(z)$. We consider the path $x \xrightarrow[]{} z \xrightarrow[]{} y_1 \xrightarrow[]{} y$. By the first case,
    $$  d_X(x,z)+d_X(z,y_1) \leq (2 \beta + 1) d_X(x,y). $$ 
    $b_{\eta}(y_1) = b_{\eta}(y)$, $b_{\eta}(z') = b_{\eta}(z)$, and they are all in $F_2$, so 
    $$d_X(y,y_1) = d_X(z',z) = d_X(p(y),p(x)) \leq d_X(x,y),$$
    where $p$ is the projection onto the horoball centered at $\eta$, and whose boundary horocycle contains $z'$ and $z$. We conclude that this path has length $\leq (2 \beta + 2) d_X(x,y)$. \qedhere
\end{proof}

\medskip \noindent
\underline{$X$ is a symmetric space:} let us prove that for some $\delta>0$, $N_{\delta}(X_{\Delta})$, equipped with the path metric, embeds quasi-isometrically in $X$. To do so, as in the building case, let us prove the following result, which can be seen as a generalization of \cref{cor quasi path in rank one}, and from which the proof follows.
\begin{prop}\label{path in union of flats symmetric space}
    Let $\eta$ be an interior point of a chamber $C$ in $\bound X$. There exist constants $\delta,\lambda,K > 0$ such that for any maximal-flats $F_1$ and $F_2$ such that $\eta \in \bound F_1 \cap \bound F_2$, the following holds: if $x \in F_1$ and $y \in F_2$, then there exists a path in $N_{\delta}(F_1 \cup F_2)$ from $x$ to $y$ of length $\leq \lambda d_X(x,y) + K$.
\end{prop}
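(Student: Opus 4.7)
My approach is to adapt the proof of \cref{path in union of flats} to the symmetric-space setting, replacing the sharp branching input \cref{lem cv geodesics in building} by \cref{lem convergence of geodesics}. Since $\eta$ is an interior point of a chamber $C \subset \bound X$, it is a regular direction; I fix the Iwasawa decomposition $\textup{Isom}_0(X) = KAN$ relative to $\eta$ and let $\delta > 0$ be the corresponding constant from \cref{lem convergence of geodesics}. This $\delta$ will serve as the thickening constant in the proposition, with $K = \delta$ and $\lambda$ depending only on $X$.

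The key preliminary step is to replace $y$ by a point $\tilde y \in F_2$ lying in the same Iwasawa $N$-orbit as $x$, with $d_X(y, \tilde y) \le C_0 \, d_X(x, y)$ for a universal constant $C_0 = C_0(X)$. Existence of such $\tilde y$ uses the regularity of $\eta$: for regular $\eta$, two rays $[p, \eta)$ and $[q, \eta)$ are strongly asymptotic if and only if $p, q$ lie in a common $N$-orbit. Since $\eta \in \bound F_1 \cap \bound F_2$ is regular, the whole chamber $C$ lies in $\bound F_1 \cap \bound F_2$, and using that elements of $N$ move points at distances going to $0$ along the $\eta$-flow (\cref{iwasawa}), the ray $[x, \eta)$ is strongly asymptotic to $F_2$. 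Applying \cref{asymptot to some geod} then yields $\tilde y \in F_2$ with $[x, \eta)$ strongly asymptotic to $[\tilde y, \eta)$, hence in the same $N$-orbit as $x$.

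With $\tilde y$ in hand, the building-case path construction transfers directly. Applying \cref{lem convergence of geodesics} with $d' = d_X(x, \tilde y)$, $x'' = [x, \eta)(d')$ and $\tilde y'' = [\tilde y, \eta)(d')$ gives $d_X(x'', \tilde y'') \le \delta$. I will then consider the concatenation $x \to x'' \to \tilde y'' \to \tilde y \to y$, where each arrow is a geodesic segment. The first leg lies in $F_1$, the last two lie in $F_2$, and the middle leg has length $\le \delta$ with endpoint $x'' \in F_1$, hence lies in $N_\delta(F_1)$; so the entire path stays in $N_\delta(F_1 \cup F_2)$. Its length is bounded by
$$ 2 d' + \delta + d_X(\tilde y, y) \le \bigl(2(1 + C_0) + C_0\bigr)\, d_X(x, y) + \delta, $$
using $d_X(x, \tilde y) \le (1 + C_0)\, d_X(x, y)$.

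The principal obstacle will be the universal Lipschitz bound $d_X(y, \tilde y) \le C_0 \, d_X(x, y)$. I expect this to follow from the homogeneity of $X$ under $G$ (which acts transitively on pairs of maximal flats sharing a regular direction at infinity) combined with standard estimates on $N$-orbits transverse to such flats; an alternative route is to invoke Eskin's lemma \cite[Lemma B.1]{eskin1998quasi} to produce convex polyhedra in $F_1$ approximating $F_1 \cap N_\delta(F_2)$, in the spirit of Step~2 Substep~1 of the symmetric-space injectivity argument, and then obtain $\tilde y$ from the nearest-point projection of $x$ onto $F_2$ up to a bounded correction within $F_2$.
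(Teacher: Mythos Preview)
Your strategy is the paper's: pass to an auxiliary point in $F_2$ whose $\eta$-ray is strongly asymptotic to $[x,\eta)$, invoke \cref{lem convergence of geodesics} to jump across, and close up with a segment in $F_2$. The paper places this auxiliary point (called $y_1$) at Busemann level $b_\eta(y)$ rather than $b_\eta(x)$, but that is cosmetic.

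The ``principal obstacle'' you flag is not an obstacle at all, and neither homogeneity arguments nor \cite[Lemma~B.1]{eskin1998quasi} are needed. Take $y_1$ on the $\eta$-geodesic through $\tilde y$ with $b_\eta(y_1)=b_\eta(y)$; then $d_X(\tilde y,y_1)=|b_\eta(x)-b_\eta(y)|\le d_X(x,y)$ since $b_\eta$ is $1$-Lipschitz. For the remaining piece $d_X(y_1,y)$, use that the projection onto strong asymptote classes at $\eta$ is $1$-Lipschitz, maps $x$ and $y_1$ to the same point, and restricts to an isometry on each horosphere slice of $F_2$; hence $d_X(y_1,y)=d\big(\pi_\eta(y_1),\pi_\eta(y)\big)=d\big(\pi_\eta(x),\pi_\eta(y)\big)\le d_X(x,y)$. (Equivalently, as the paper does ``as in the building case'': project $x$ and $y$ onto a horoball deep enough that the image of $[x,\eta)$ lies on the $\eta$-line through $y_1$ in $F_2$, and read off $d_X(y_1,y)\le d_X(x,y)$ from the $1$-Lipschitz horoball projection.) So $C_0=2$ suffices.

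One small correction: ``$[p,\eta)$ strongly asymptotic to $[q,\eta)$'' is equivalent to ``$p,q$ in the same $N$-orbit'' only under the additional hypothesis $b_\eta(p)=b_\eta(q)$; \cref{asymptot to some geod} produces $\tilde y$ only up to translation along its $\eta$-line, so you must impose $b_\eta(\tilde y)=b_\eta(x)$ before applying \cref{lem convergence of geodesics}.
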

\begin{proof}
    Let $\delta$ be the constant of \cref{lem convergence of geodesics} and, without loss of generality, we suppose $b_{\eta}(x) - b_{\eta}(y) \geq 0$.
    
    \medskip \noindent
    • If $[x,\eta)$ is strongly asymptotic to $[y,\eta)$, \cref{lem convergence of geodesics} implies that there exists a path in $N_{\delta}([x,\eta) \cup [y,\eta)) \subset N_{\delta}(F_1 \cup F_2)$ of length $\leq 3 \, d_X(x,y)+ \delta$.
    \begin{figure}[!ht]
    \centering
    \def\svgwidth{0.57\textwidth}
    {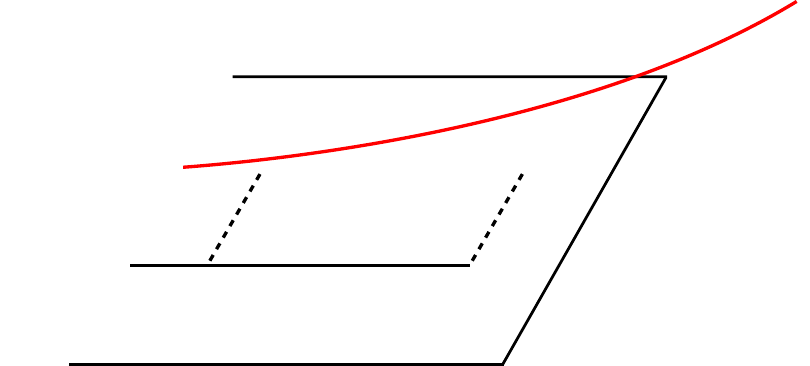}
    \caption{Path from $x$ to $y$ in $N_\delta(X_{\Delta})$.}
    \label{path in N(X_Delta)}
\end{figure}
    
    \medskip \noindent
    • If not, let us consider the following points: $y_1 \in F_2$ such that $[x,\eta)$ is strongly asymptotic to $[y_1,\eta)$ and $b_{\eta}(y_1) = b_{\eta}(y)$, $x_1 \in [x,\eta)$ such that $b_{\eta}(x_1) = b_{\eta}(y_1)$, $x_2 \in [x_1,\eta)$ such that $d_X(x_1,x_2) = d_X(x_1,y_1)$, $y_2 \in [y_1,\eta)$ such that $d_X(y_1,y_2) = d_X(x_1,y_1)$, and $y' \in [y,\eta)$ such that $b_{\eta}(y') = b_{\eta}(x_2) = b_{\eta}(y_2)$, see \cref{path in N(X_Delta)}.

    \medskip \noindent
    The first case implies that the path $x \xrightarrow[]{} x_1 \xrightarrow[]{} x_2 \xrightarrow[]{} y_2 \xrightarrow[]{} y_1$, which is in $N_{\delta}(F_1 \cup F_2)$, has length $\leq 3 d_X(x,y_1) + \delta$. And, as in the building case, $d_X(x,y_1) \leq d_X(x,y)$, and $d_X(y_1,y) = d_X(y_2,y') \leq d_X(x,y)$. We conclude that the path $x \xrightarrow[]{} x_1 \xrightarrow[]{} x_2 \xrightarrow[]{} y_2 \xrightarrow[]{} y_1 \xrightarrow[]{} y$ has length $\leq 4 d_X(x,y) + \delta $.
\end{proof}
\noindent
This completes the proof of \cref{mainthm general}.
\section{Appendix}
In this appendix, we show that the quasi-isometric embedding of the product of $n$ copies of $\mathbb{H}_{\mathbb{R}}^2$ into any symmetric space of non-compact type of rank $n$ can also be obtained as an $AN$-map. The idea of the proof was communicated to the first author by Yves Benoist. 

\medskip \noindent
Let us recall the following theorem due to Fisher--Whyte \cite[Theorem 1.5]{fisher2018quasi}.
\begin{theorem*}
    Let $G_1$ and $G_2$ be semisimple Lie groups of equal rank with Iwasawa decompositions $G_i = K_i A_i N_i$. Every injective homomorphism $A_1 N_1 \to A_2 N_2$ is a quasi-isometric embedding.
\end{theorem*}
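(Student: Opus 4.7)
The plan is to study $\varphi$ as a smooth homomorphism between the solvable Lie groups $A_iN_i$, each equipped with the left-invariant Riemannian metric pulled back from $X_i=G_i/K_i$ via the orbit map at a basepoint (a diffeomorphism by \cref{iwasawa}$(ii)$). The first step is structural. Since $N_i$ is the nilradical of $A_iN_i$ (intrinsically defined), $\varphi(N_1)\subset N_2$, so $\varphi$ descends to a Lie group homomorphism $\bar\varphi\colon A_1\to A_2$, which is linear in exponential coordinates. Injectivity of $\varphi$ together with the equal-rank hypothesis forces $\bar\varphi$ to be a linear isomorphism $\mathfrak a_1\xrightarrow{\sim}\mathfrak a_2$. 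The compatibility $[d\varphi(H),d\varphi(X)]=d\varphi([H,X])$ applied to $H\in\mathfrak a_1$ and $X\in\mathfrak g_{1,\alpha}$ further forces a root-matching condition: the $\mathfrak g_{2,\beta}$-component of $d\varphi(X)$ vanishes unless $\beta\circ\bar\varphi=\alpha$.

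The Lipschitz upper bound is then immediate. Left-invariance of both metrics combined with the homomorphism property gives $d\varphi_g=dL_{\varphi(g)}\circ d\varphi_e\circ dL_{g^{-1}}$, so $\|d\varphi_g\|$ is constantly equal to $L:=\|d\varphi_e\|$, and $\varphi$ is $L$-Lipschitz. For the lower bound, the plan is to control $d_{X_2}(e,\varphi(g))$ from below by two coarse invariants. First, the \emph{flat invariant}: choose regular points $\mu_1,\dots,\mu_n$ in the boundary of the flat $A_2\cdot o\subset X_2$ whose Busemann functions $b_{\mu_k}$ restrict to a basis of $\mathfrak a_2^\ast$. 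Each $b_{\mu_k}$ is $1$-Lipschitz and factors through the Iwasawa projection $A_2N_2\to A_2$, so $b_{\mu_k}\circ\varphi$ is $1$-Lipschitz on $A_1N_1$ with restriction to $A_1$ equal to the linear functional $b_{\mu_k}\circ\bar\varphi$. Because $\bar\varphi$ is a linear isomorphism these pullbacks span $\mathfrak a_1^\ast$, producing a bound $d_{X_2}(e,\varphi(an))\geq c\|\log a\|_{\mathfrak a_1}$. Second, the \emph{horocyclic invariant}: $d_{X_i}(e,n)$ for $n\in N_i$ grows like the logarithm of a natural root-weighted gauge on $\mathfrak n_i$, and the root-matching constraint together with injectivity of $d\varphi|_{\mathfrak n_1}$ yields a bi-Lipschitz comparison of these gauges, hence an additive comparison of the logarithmic contributions.

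The main obstacle is the assembly step. In higher rank, $d_{X_i}(e,an)$ couples the $A$- and $N$-directions nontrivially, through a max-type expression that mixes linear terms in $\log a$ with logarithmic terms in the root coordinates of $n$ weighted by $\alpha(\log a)$; a short geodesic in $X_i$ can balance a large flat translation against a large horocyclic offset along a root $\alpha$ for which $\alpha(\log a)$ is small. The root-matching constraint is precisely what prevents $\varphi$ from collapsing such balanced configurations: since $\beta\circ\bar\varphi=\alpha$ whenever $d\varphi$ sends $\mathfrak g_{1,\alpha}$ into $\mathfrak g_{2,\beta}$, both the weight $\alpha(\log a)$ and its image $\beta(\bar\varphi(\log a))$ transform consistently. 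Assembling the flat and horocyclic bounds using this compatibility yields the desired inequality $d_{X_2}(e,\varphi(g))\geq c'\,d_{X_1}(e,g)-C$, which combined with the Lipschitz bound completes the proof.
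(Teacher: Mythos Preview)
The paper does not prove this theorem. It is quoted verbatim from Fisher--Whyte \cite[Theorem~1.5]{fisher2018quasi} at the start of the Appendix and then used as a black box to deduce the embedding of $(\mathbb H_{\mathbb R}^2)^n$ into $X$; the paper's own contribution in the Appendix is the construction of the subgroup $H\subset AN$ isomorphic to a product of affine groups, via the inductive choice of roots $\alpha_1,\dots,\alpha_n$ with no $\alpha_i+\alpha_j$ a root. So there is no proof in the paper to compare your attempt against.

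As for your sketch on its own terms: the structural part (nilradical, induced isomorphism $\bar\varphi$, root-matching) and the Lipschitz upper bound are fine, and the flat invariant via Busemann functions is a clean way to get $d_{X_2}(e,\varphi(an))\gtrsim\|\log a\|$. The gap is the assembly step, which you correctly flag as the crux but do not carry out. Saying that $d_{X_i}(e,an)$ is a ``max-type expression'' mixing linear and logarithmic terms, and that root-matching ``prevents $\varphi$ from collapsing balanced configurations,'' is a heuristic, not an argument: you have not written down the expression, nor shown the claimed bi-Lipschitz comparison of the root-weighted gauges on $\mathfrak n_i$, nor explained how injectivity of $d\varphi|_{\mathfrak n_1}$ survives the possible spreading of a single root space $\mathfrak g_{1,\alpha}$ over several $\mathfrak g_{2,\beta}$ with $\beta\circ\bar\varphi=\alpha$. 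In Fisher--Whyte's actual proof this step is handled by an explicit distance formula on $AN$ (essentially Leuzinger's formula) together with the root-compatibility; your outline is pointing in the right direction but stops exactly where the work begins.
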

\noindent
Let $X$ be a symmetric space of non-compact type of rank $n$, and $G = KAN$ an Iwasawa decomposition, where $G = \textup{Isom}_0(X)$. To show that there exists a quasi-isometric embedding from the product of $n$ copies of the real hyperbolic plane into $X$, we need to show that there exists a subgroup of $AN$ isomorphic to the product of $n$ copies of the affine group 
$$ \left\{ \begin{pmatrix}
e^t & s e^t \\
0 & e^{-t} 
\end{pmatrix} \Bigl\vert \, t,s \in \mathbb{R}  \right\}. $$
We use the notations from \cref{section Iwasawa}: Let $\mathfrak{a}$ be a Cartan subspace such that $A = \exp(\mathfrak{a})$, and $\mathfrak{n} = \bigoplus_{\alpha \in \Phi, \alpha(Y)>0} \mathfrak{g}_{\alpha}$ the sum of positive root spaces with respect to some regular vector $Y \in \mathfrak{a}$ such that $N = \exp(\mathfrak{n})$. Let us show that it is enough to find linearly independent positive roots $\alpha_1, \dots, \alpha_n$ such that for any $i \ne j$, $\alpha_i + \alpha_j$ is not a root.

\medskip \noindent
For any $i = 1, \dots , n$, take $Z_i \in \mathfrak{g}_{\alpha_i} \backslash \{0\} $. Since for any $i \ne j$, $\alpha_i + \alpha_j$ is not a root, $e^{Z_i}$ and $e^{Z_j}$ commute. Therefore 
$$\displaystyle \prod_{i = 1}^n \exp(\mathbb{R}.Z_i) \simeq \mathbb{R}^n,$$ 
on which $A$ acts diagonally. Indeed, since $\alpha_1, \dots, \alpha_n$ are linearly independent, pick, for any $i = 1, \dots, n$, $X_i \in \cap_{j \ne i} \ker \alpha_j$ such that $\alpha_i(X_i) = 2$. So for any $i \ne j$, $e^{X_i}$ and $e^{Z_j}$ commute:
$$e^{X_i}e^{Z_j} = e^{X_i}e^{Z_j} e^{-X_i}e^{X_i} = \exp(e^{\alpha_j(X_i)}Z_j)e^{X_i} = e^{Z_j}e^{X_i}.$$
Therefore, for any $t_1, \dots, t_n,s_1, \dots, s_n \in \mathbb{R}$,
$$\displaystyle H := \exp\left(\sum_{i=1}^n t_i X_i\right)\exp\left(\sum_{j=1}^n s_j Z_j\right) = \prod_{i = 1}^n e^{t_i X_i}e^{s_i Z_i}.$$
It is easy to check that for any $i = 1, \dots,n$, 
$$  \left\{ e^{t X_i}e^{s Z_i} \bigl\vert \, t,s \in \mathbb{R}  \right\} \simeq \left\{ \begin{pmatrix}
e^t & s e^t \\
0 & e^{-t} 
\end{pmatrix} \Bigl\vert \, t,s \in \mathbb{R}  \right\}. $$
Therefore $H$ is the desired subgroup of $AN$ which isomorphic to the product of $n$ copies of the affine group.

\noindent\medskip
\textbf{Finding the roots $\alpha_1, \dots, \alpha_n$:}
We recall that there exists a natural order on the set of positive roots: given two roots $\alpha$ and $\beta$, $\alpha \leq \beta$ iff $\beta-\alpha$ is a non-negative linear combination of simple roots. We refer to \cite[Chap.6]{bourbaki81} for more details. We start by taking $\alpha_1$ the biggest positive root, and let $\alpha_2$ be the biggest positive root than is not in $\textup{span}\{\alpha_1\}$. $\alpha_1 + \alpha_2$ is not a root, otherwise it would be bigger than $\alpha_1$. Take $\alpha_3$ the biggest positive root that is not in $\textup{span}\{\alpha_1, \alpha_2\}$. Again, and by the same argument, $\alpha_1 + \alpha_3$ and $\alpha_2 + \alpha_3$ are not roots. We conclude by induction.

\bibliographystyle{alpha}
{\footnotesize\bibliography{Biblio}}
\end{document}